\date{}
\renewcommand{\uppercasenonmath}[1]{}
\numberwithin{equation}{section} \theoremstyle{plain}
\newtheorem{lem}{Lemma}[section]
\newtheorem{cor}[lem]{Corollary}
\newtheorem{prop}[lem]{Proposition}
\newtheorem{thm}[lem]{Theorem}
\newtheorem{definition}[lem]{Definition}
\newtheorem{Ex}[lem]{Example}
\newtheorem{Quest}[lem]{Question}
\newtheorem{Property}[lem]{Property}
\newtheorem{Properties}[lem]{Properties}
\newtheorem{Subprops}{}[lem]
\newtheorem{Para}[lem]{}
\newtheorem{rem}[lem]{Remark}
\newenvironment{ex}{\begin{Ex}\rm}{\end{Ex}}
\newtheorem*{ack*}{ACKNOWLEDGEMENTS}
\newcommand{\oo}{\otimes}
\newcommand{\pf}{\noindent\begin {proof}}
\newcommand{\epf}{\end{proof}}
\newcommand{\A}{\mathcal{A}}
\newcommand{\C}{\mathcal{C}}
\newcommand{\U}{\mathcal{U}}
\newcommand{\mto}{\rightarrowtail}
\newcommand{\eto}{\twoheadrightarrow}
\newcommand{\s}{\stackrel}
\newcommand{\B}{\mathcal{B}}
\newcommand{\Ext}{{\rm Ext}}
\newcommand{\Hom}{{\rm Hom}}
\newcommand{\ccc}[3]{{
  \left(\begin{smallmatrix} {#1}   \\   {#2} \\\end{smallmatrix}\right)_{#3}}}
\begin{document}
\begin{center}
{\Large  \bf  Gluing and lifting exact model structures for the recollement of exact categories}

\vspace{0.5cm}  Jiangsheng Hu$^{a}$, Haiyan Zhu$^{b}$\footnote{Corresponding author.\\
\indent Jiangsheng Hu was supported by the NSF of China (12171206) and the Natural Science Foundation of Jiangsu Province (BK20211358). Haiyan Zhu was supported by Zhejiang Provincial Natural Science Foundation of China (LY18A010032) and the NSF of China
(12271481). Rongmin Zhu was supported by the NSF of China (12201223).} and Rongmin Zhu$^{c}$

$^{a}$School of Mathematics, Hangzhou Normal University,
 Hangzhou 311121, China\\

$^{b}$College of Science, Zhejiang University of Technology, Hangzhou 310023, China\\

$^{c}$School of Mathematical Sciences, Huaqiao University, Quanzhou 362021, China\\

E-mails: jiangshenghu@hotmail.com, hyzhu@zjut.edu.cn and rongminzhu@hotmail.com \\
\end{center}
\bigskip
\centerline { \bf  Abstract}
\leftskip10truemm \rightskip10truemm \noindent
In this paper, we first provide an explicit procedure to glue together hereditary exact model structures for the recollement of exact categories. To that end, we use the notion of cotorsion pairs and we investigate the gluing of complete hereditary cotorsion pairs along the recollement of exact categories. Moreover, we study liftings of recollements of hereditary exact model structures to recollements of their associated homotopy categories. This leads to a new method to produce recollements of triangulated categories. Applications are given to contraderived categories, projective stable derived categories and stable categories of Gorenstein injective modules over an upper triangular matrix ring.
\leftskip10truemm \rightskip10truemm \noindent
\\[2mm]
{\bf Keywords:} recollement; exact model structure; cotorsion pair; homotopy category.\\
{\bf 2020 Mathematics Subject Classification:} 18G10, 18G25, 16D90.

\leftskip0truemm \rightskip0truemm

\section{Introduction}
The notion of a cotorsion pair goes back to \cite{SL}, which has been defined originally in the category of abelian groups, and then in an abelian category or an exact category. It got an enormous impulse thanks to the discovery by Hovey \cite{ho02} of the one-to-one correspondence between abelian model structures and certain cotorsion pairs in abelian categories. Later on, Gillespie demonstrated in \cite{gj11} that the above Hovey's one-to-one correspondence naturally carries over to a correspondence between hereditary exact model structures and cotorsion pairs in a weakly idempotent complete exact category. For short, we will call this a \emph{WIC exact category} in this paper.
Therefore, the theory of exact model structures concerns the case of when $\C$ is a WIC exact category and there is a model structure on $\C$ that is compatible with the exact structure. The upshot of working with a hereditary exact model structure is that its homotopy category is canonically triangulated, in fact, it coincides with the stable category of a Frobenius category (see \cite{gj11,gj16,hv99} for examples).

Recollements were first introduced in the setting of triangulated categories by Beilinson, Bernsteinand Deligne \cite{BBD} and then generalized to the level of abelian categories (see for instance \cite{FP2004,GKP,Kuhn,Psaroudakis}). Recently, Wang, Wei and Zhang \cite{Wand-wei-Zhang} give a generalization of recollements of abelian categories, which they called recollements of exact categories. Roughly speaking, a recollement is a short exact sequence of triangulated or exact categories where the functors involving admit both left and right adjoints. Such a recollement situation of exact categories is denoted throughout the paper by the
following diagram
$$\xymatrix@C=50pt@R=50pt{\ \mathcal{A}\ar[r]^{i_*}&\ \mathcal{C}\ar@/^1pc/[l]^{i^!}\ar@/_1pc/[l]_{i^*}\ar[r]^{j^*} &\ \mathcal{B}\ar@/^1pc/[l]^{j_*}\ar@/_1pc/[l]_{j_!} }\eqno{(1.1)}$$
of WIC exact categories and additive functors satisfying the compatibility conditions in \cite[Definition 3.1]{Wand-wei-Zhang}.
In this case one says that $(\mathcal{A},\mathcal{C},\mathcal{B})$ is a \emph{recollement of exact categories}.

It should be noted that recollements of exact categories (in particular abelian categories) appear quite naturally in various settings and are omnipresent in representation theory (see \cite{GKP,Psaroudakis}). For instance, any idempotent elemente in a ring $R$ induces a recollement situation between the module categories over the rings $R$, $R/ReR$ and $eRe$ (see \cite[Example 2.7]{Psaroudakis}). Moreover, it has been shown in Theorem \ref{thm:3.5} and Example \ref{ex:3.1} that a recollement of abelian categories under some mild conditions can induce a nontrivial recollement of exact categories (it is no longer a recollement of abelian categories).

However, the existence of recollements of triangulated categories often is difficult to establish and then plays an important role in geometry of singular spaces \cite{BBD}, representation theory \cite{Angeleri,Cline,konig} and homological conjectures \cite{Chenxi0,Chenxi,Happel2,Z}.
In the recent ten years, there are a lot of interesting work on the constructions of recollements
of triangulated categories such as derived categories of ordinary rings or differential graded rings, stable categories of Frobenius categories, or more generally, homotopy category of exact model structures (see for instance \cite{Bazzoni2019,Chenxi0,Chenxi,Chenxi-JPAA,GC,gj16,gj16.1}). Motivated by the discussion so far, we study the following general questions.

\textbf{Question 1.1.}  Let $(\mathcal{A},\mathcal{C},\mathcal{B})$ be a recollement of exact categories.
 \begin{enumerate}
\item[(1)] How can we glue together
hereditary exact model structures in $\A$ and $\B$ to obtain a hereditary exact model structure in $\C$?

\item[(2)] When these model structures can be lifted to a recollement of their associated triangulated homotopy categories?
 \end{enumerate}

In order to answer these questions,  we first need to take up the following question because of the one-to-one correspondence between exact model structures and certain complete cotorsion pairs in a WIC exact category (see \cite[Corollary 3.4]{gj11}).

\textbf{Question 1.2.}  Giving a recollement $(\mathcal{A},\mathcal{C},\mathcal{B})$ of exact categories, how can we glue together complete hereditary
cotorsion pairs in $\A$ and $\B$ to obtain a complete hereditary cotorsion pair in $\C$?

Recall that for a right exact functor $T:\mathcal{B}\rightarrow\mathcal{A}$ between abelian categories, there exists an abelian category, denoted by $(T\downarrow\mathcal{A})$, consisting of all triples $\left(\begin{smallmatrix}X\\Y\end{smallmatrix}\right)_{\varphi}$ where $\varphi: T(Y)\rightarrow X$ is a morphism in $\mathcal{A}$. We note that this new abelian category is called a \emph{comma category} in \cite{Marmaridis} and forming a comma category along a given functor is a standard way to glue two categories.  We refer to \cite{Huzhu,Psaroudakis} for a detailed discussion on this matter. Recently, Hu and Zhu characterized when complete hereditary cotorsion pairs in abelian categories $\mathcal{A}$ and $\mathcal{B}$ can induce complete hereditary cotorsion pairs in $(T\downarrow\A)$ (see \cite[Proposition 3.4]{Huzhu}). If $(\mathcal{A},\mathcal{C},\mathcal{B})$ is a recollement of abelian categories and $i^{!}$ is an exact functor in Question 1.2, then the abelian category $\mathcal{C}$ is equivalent to the comma category $(i^{!}j_{!}\downarrow\A)$ (see \cite[Proposition 8.9]{FP2004} or \cite[Proposition 3.1]{GKP}). So \cite[Proposition 3.4]{Huzhu} gives an answer to Question 1.2 provided that $(\mathcal{A},\mathcal{C},\mathcal{B})$ is a recollement of abelian categories such that $i^{!}$ is an exact functor.

The main aim of this paper is to provide more answers to the above questions. To state our results precisely, we first introduce some notation and definitions.

Let $T:\mathcal{D}_1\rightarrow \mathcal{D}_2$ be a functor between WIC exact categories, and let $\mathcal{Y}$ be a subcategory of $\mathcal{D}_1$. The functor $T$ is called \emph{$\mathcal {Y}$-exact} if $T$ preserves the exactness of the admissible exact sequence $B\mto B'\eto Y$ in $\mathcal{D}_1$ with $Y\in\mathcal {Y}$. Here we denote admissible monomorphisms by $\rightarrowtail$ and denote admissible epimorphisms by $\twoheadrightarrow$.

Let $\mathcal{X}$ be a subcategory of $\mathcal{A}$ and $\mathcal{Y}$ a subcategory of $\mathcal{B}$ in the recollement (1.1). We set
\begin{displaymath}
\mathscr{N}^{\mathcal{X}}_{\mathcal{Y}}:=\{C\in{\C} \ | \ i^{!}(C)\in{\mathcal{X}}, \ j^{*}(C)\in{\mathcal{Y}}\},
\end{displaymath}
\begin{displaymath}
\mathscr{M}^{\mathcal{X}}_{\mathcal{Y}}:=\{C\in{\C} \ | \ i^{*}(C)\in{\mathcal{X}}, \ j^{*}(C)\in{\mathcal{Y}}, \ \varepsilon_{C}: j_{!} j^{*}(C)  \mto C \ \textrm{is} \ \textrm{an} \ \textrm{admissible} \ \textrm{monomorphism}\},
\end{displaymath}
where $\varepsilon:j_{!} j^{*}\rightarrow 1_{\mathcal{C}}$ is the counit of the adjoint pair $(j_{!},j^{*})$ in the recollement (1.1).

The next result conveys that one can glue together complete hereditary cotorsion pairs from $\mathcal{A}$ and $\mathcal{B}$ to $\mathcal{C}$ in the recollement (1.1), which provides a partial answer to Question 1.2 properly.

\begin{thm}\label{thm:1.1} Let $(\mathcal{A},\mathcal{C},\mathcal{B})$ be a recollement of exact categories with $i^{!}$ an exact functor.
 Assume that $(\mathcal{U}',\mathcal{V}')$ and $(\mathcal{U}'',\mathcal{V}'')$ are complete hereditary cotorsion pairs in $\mathcal{A}$ and $\mathcal{B}$, respectively.
 Set $\mathcal{U}=\mathscr{M}^{\mathcal{U}'}_{\mathcal{U}''}$ and $\mathcal{V}=\mathscr{N}_{\mathcal{V}''}^{\mathcal{V}'}$. If $\mathcal{C}$ has enough projective and injective objects and $j_{!}$ is $\mathcal{U}''$-exact, then $(\mathcal{U},\mathcal{V})$ is a complete hereditary cotorsion pair in $\C$.
\end{thm}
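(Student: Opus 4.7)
The plan is to verify each of the defining conditions of a cotorsion pair on $\mathcal{C}_1$ in turn: first orthogonality $\Ext^1_{\mathcal{C}_1}(\mathcal{U},\mathcal{V})=0$, then the existence of special precovers and preenvelopes, and finally the identities $\mathcal{U}={}^{\perp_1}\mathcal{V}$ and $\mathcal{V}=\mathcal{U}^{\perp_1}$. The hereditary case will be treated afterwards by dimension shifting, using that hereditariness amounts to $\mathcal{V}$ being closed under cokernels of monomorphisms (equivalently $\mathcal{U}$ under kernels of epimorphisms) and that this can be tested along the recollement functors.

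For orthogonality, given $U\in\mathcal{U}$, $V\in\mathcal{V}$ and a short exact sequence $0\to V\to E\to U\to 0$, I would use that the counit $\varepsilon_{U}\colon j_{!}j^{*}(U)\to U$ is monic to produce the canonical sequence $0\to j_{!}j^{*}(U)\to U\to i_{*}i^{*}(U)\to 0$ (since $j^{*}i_{*}=0$ and $i^{*}j_{!}=0$). Pulling back the original extension along $\varepsilon_{U}$ and pushing out along the unit $V\to i_{*}i^{!}(V)$ decomposes the Yoneda class of $E$ into two pieces: one extension by $i_{*}i^{!}(V)$ of $i_{*}i^{*}(U)$, corresponding via the adjunctions $(i^{*},i_{*})$ and $(i_{*},i^{!})$ to an element of $\Ext^{1}_{\mathcal{A}}(i^{*}(U),i^{!}(V))=0$; and one extension of $j^{*}(U)$ by $j^{*}(V)$, corresponding to an element of $\Ext^{1}_{\mathcal{B}_{1}}(j^{*}(U),j^{*}(V))=0$. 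The $\mathcal{U}''$-exactness of $j_{!}$ is what ensures the second reduction is available, because it guarantees that $j_{!}$ preserves the exactness of the chosen short exact sequence in $\mathcal{B}_{1}$ before reassembling it in $\mathcal{C}_{1}$.

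To construct a special $\mathcal{V}$-preenvelope of $C\in\mathcal{C}_{1}$, I start with a special $(\mathcal{U}'',\mathcal{V}'')$-preenvelope $0\to j^{*}(C)\to V''\to U''\to 0$ in $\mathcal{B}_{1}$ and a special $(\mathcal{U}',\mathcal{V}')$-preenvelope $0\to i^{!}(C)\to V'\to U'\to 0$ in $\mathcal{A}$. Applying $j_{*}$ and $i_{*}$ and splicing them into $C$ via the unit and counit of the recollement gives, by a pullback/horseshoe construction in $\mathcal{C}$, a short exact sequence $0\to C\to V\to U\to 0$ with $i^{!}(V)\cong V'$, $j^{*}(V)\cong V''$, $i^{*}(U)\cong U'$, $j^{*}(U)\cong U''$, and $\varepsilon_{U}$ monic. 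The condition $i^{*}j_{*}j^{*}(C)=0$ defining $\mathcal{C}_{1}$ is used to check that $V$ (and hence $U$) again lies in $\mathcal{C}_{1}$, and the $\mathcal{U}''$-exactness of $j_{!}$ is used to verify the monic-counit clause in $\mathfrak{M}^{\mathcal{U}'}_{\mathcal{U}''}$ for the glued $U$. The construction of a special $\mathcal{U}$-precover is dual, starting from special precovers in $\mathcal{A}$ and $\mathcal{B}_{1}$ and using a pushout along $\varepsilon_{C}$. Orthogonality plus approximations then yields the equalities $\mathcal{V}=\mathcal{U}^{\perp_{1}}$ and $\mathcal{U}={}^{\perp_{1}}\mathcal{V}$ by the standard Salce argument.

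The main obstacle is the preenvelope/precover construction: since $i^{!}$ fails to be exact in general, the classes $\mathcal{U},\mathcal{V}$ do not lift through the recollement in a naive way, and a gluing of the upstairs approximations need not land in $\mathcal{C}_{1}$ nor satisfy the monic-counit condition from $\mathfrak{M}$. The precise role of the two hypotheses singled out in the theorem, namely the definition of $\mathcal{C}_{1}$ via $i^{*}j_{*}j^{*}=0$ and the $\mathcal{U}''$-exactness of $j_{!}$, is to make exactly these two checks go through; verifying this carefully will be the bulk of the proof.
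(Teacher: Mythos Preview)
Your overall plan—orthogonality, glued preenvelopes, Salce—matches the paper's, but two steps as written would fail. A minor slip first: there is no natural map $V\to i_*i^!(V)$; the counit runs the other way. Orthogonality follows directly from the long exact sequence attached to $0\to j_!j^*(U)\to U\to i_*i^*(U)\to 0$ together with the adjunction isomorphisms $\Ext^1_{\mathcal{C}_1}(i_*i^*(U),V)\cong\Ext^1_{\mathcal{A}}(i^*(U),i^!(V))$ and $\Ext^1_{\mathcal{C}_1}(j_!j^*(U),V)\cong\Ext^1_{\mathcal{B}_1}(j^*(U),j^*(V))$, the latter needing $\mathcal{U}''$-exactness of $j_!$.

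The substantive gap is in the preenvelope construction. The paper applies $j_!$, not $j_*$, to the $\mathcal{B}_1$-preenvelope: since $\eta_C\colon C\to j_*j^*(C)$ is an \emph{epimorphism} for $C\in\mathcal{C}_1$, applying $j_*$ gives you no monomorphism out of $C$ to build on. Instead one uses $\mathcal{U}''$-exactness to get $0\to j_!j^*(C)\to j_!(V'')\to j_!(U'')\to 0$, pushes out along $\varepsilon_C$ to obtain $0\to C\to D\to j_!(U'')\to 0$, and then takes the special $\mathcal{V}'$-preenvelope of $i^!(D)$ (not of $i^!(C)$). The structural tool that glues everything is the exact sequence $0\to i_*i^!j_!j^*(M)\to j_!j^*(M)\oplus i_*i^!(M)\to M\to 0$, valid for all $M\in\mathcal{C}_1$; the candidates $V$ and $U$ are built as pushouts of this shape, and a careful diagram chase confirms $i^!(V)\cong V'$, $j^*(V)\cong V''$, $i^*(U)\cong U'$, $j^*(U)\cong U''$, that $\varepsilon_U$ is monic, and that $U,V\in\mathcal{C}_1$. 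Finally, the precover is \emph{not} obtained by a dual construction—the classes $\mathfrak{M}$ and $\mathfrak{N}$ are asymmetric (the monic-counit clause sits only in $\mathfrak{M}$, and $\mathcal{U}$ is governed by $i^*$ while $\mathcal{V}$ by $i^!$), and no exactness hypothesis dual to the one on $j_!$ is assumed—so the paper simply invokes Salce's Lemma once preenvelopes and the cotorsion-pair axioms are in hand.
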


A few comments on Theorem \ref{thm:1.1} are in order. First, it generalizes Lemma 3.3 and Proposition 3.4 in \cite{Huzhu}. More precisely, in \cite{Huzhu}, one of key arguments in the proof is that all objects in the comma category $(T\downarrow\mathcal{A})$ can be represented clearly by
the objects in $\mathcal{A}$ and $\mathcal{B}$, while in our general context we do not have this fact and therefore must avoid this kind
of arguments. So, the idea of proving Theorem \ref{thm:1.1} will be different from the one in \cite{Huzhu} (see Remark \ref{remark:3.9}).

Second, it should be pointed out that the condition ``$(\mathcal{A},\mathcal{C},\mathcal{B})$ is a recollement of exact categories with $i^{!}$ an exact functor" is natural and very often met. More specifically, one can construct the desired recollement with $i^{!}$ an exact functor from any recollement of abelian categories (see Theorem \ref{thm:3.5} and Example \ref{ex:3.1}). During the course of the proof of Theorem \ref{thm:3.5}, for any recollement (1.1) of exact categories, we will show that $i^{!}$ is an exact functor if and only if $i^{*}j_{*}j^{*}=0$, which refines a result obtained by Franjou and Pirashvili in \cite{FP2004} (see Proposition
\ref{prop:2.5} and Remark \ref{remark:2.6}).

Last, we employ an example in \cite{ZH} to illustrate Theorem \ref{thm:1.1} as follows: (1) The exactness of the functor $i^!$ cannot be omitted in general; (2) The condition ``$\mathcal{C}$ has enough projective and injective objects and $j_{!}$ is $\mathcal{U}''$-exact" really occurs (see Example \ref{ex:3.2}).

Our next result, providing a partial answer to Question 1.1, can be stated as follows. Its proof is based on Theorem \ref{thm:1.1} and a deep result of Gillespie  about constructing a hereditary exact model structure from two cotorsion pairs in a WIC exact category (see \cite[Theorem 1.1]{gj14}).

\begin{thm}\label{thm:main1} Let $(\mathcal{A},\mathcal{C},\mathcal{B})$ be a recollement of exact categories  with $i^{!}$ an exact functor, and let $\mathcal{M}_{\mathcal{A}}=(\mathcal{U}'_{1},\mathcal{W}',\mathcal{V}'_{2})$ and $\mathcal{M}_{\mathcal{B}}=(\mathcal{U}''_{1},\mathcal{W}'',\mathcal{V}''_{2})$ be hereditary exact model structures on $\mathcal{A}$ and $\mathcal{B}$, respectively.  Set $\mathcal{U}_1=\mathscr{M}^{\mathcal{U}'_{1}}_{\mathcal{U}''_{1}}$, $\mathcal{V}_1=\mathscr{N}_{\mathcal{W}''\cap\mathcal{V}''_{2}}^{\mathcal{W}'\cap\mathcal{V}'_{2}}$, $\mathcal{U}_2=\mathscr{M}^{\mathcal{U}'_{1}\cap\mathcal{W}'}_{\mathcal{U}''_{1}\cap\mathcal{W}''}$ and $\mathcal{V}_2=\mathscr{N}_{\mathcal{V}''_{2}}^{\mathcal{V}'_{2}}$. Assume that $\mathcal{C}$ has enough projective and injective objects, $j_{!}$ is $\mathcal{U}''_{1}$-exact and  $\mathcal{U}_{1}\cap\mathcal{V}_{1}=\mathcal{U}_{2}\cap\mathcal{V}_{2}$.

\begin{enumerate}
\item There is a hereditary exact model structure $\mathcal{M}_{\mathcal{C}}=(\mathcal{U}_{1},\mathcal{W},\mathcal{V}_{2})$ on $\mathcal{C}$, where the class $\mathcal{W}$ is given by
\begin{align*}
 \mathcal{W }&=\{ X \in\mathcal{C} \mid \exists~\text{an admissible exact sequence}~ X\mto R\eto Q~with~R\in{\mathcal{V}_1},~Q\in{\mathcal{U}_2} \}\\
&=\{ X \in\mathcal{C} \mid \exists~\text{an admissible exact sequence}~ R'\mto Q'\eto X~with~R'\in{\mathcal{V}_1},~Q'\in{\mathcal{U}_2}\}.
\end{align*}

\item We have the following recollement of triangulated categories

$$\xymatrixcolsep{5pc}\xymatrix{\mathrm{Ho}(\mathcal{M}_{\mathcal{A}})
  \ar[r]^{L(i_{\ast})\cong R(i_{\ast})}&
  \ar@/_2pc/[l]_{L(i^{\ast})}\ar@/^2pc/[l]^{R(i^{!})}\mathrm{Ho}(\mathcal{M}_{\mathcal{C}})
\ar[r]^{L(j^{\ast})\cong R(j^{\ast})}&\ar@/_2pc/[l]_{L(j_{!})}\ar@/^2pc/[l]^{R(j_{\ast})}
\mathrm{Ho}(\mathcal{M}_{\mathcal{B}}),}$$
where $L(i^{\ast})$, $L(i_{\ast})$, $L(j_{!})$, $L(j^{\ast})$, $R(i_{\ast})$,  $R(j^{\ast})$, $R(i^{!})$ and $R(j_{\ast})$ are the total derived functors of those in {\rm (1.1)}.
\end{enumerate}
\end{thm}

Recently, Gao, Koenig and Psaroudakis \cite{GKP} showed that
ladders of certain height of recollements of abelian categories
allow to construct recollements of triangulated categories; Georgios and Psaroudakis \cite{GC} used Quillen model structures to show a systematic method to lift recollements of projective (resp., injective) hereditary abelian model structures (see \cite[Setup 6.5]{GC}) to recollements of their associated homotopy categories. It should be noted that our work offers a different perspective. More precisely, we first glue together complete hereditary cotorsion pairs for the recollement of exact categories, then provide an explicit procedure to glue together hereditary exact model structures (not necessarily projective or injective abelian model structures) for this recollement. As a result, we can build recollements of triangulated categories by lifting recollement of hereditary exact model categories to recollements of their associated homotopy categories.

As an application of Theorem \ref{thm:main1}, for any upper triangular matrix ring, we obtain recollements of contraderived categories, projective stable derived categories and stable categories of Gorenstein injective modules. (see Corollaries \ref{corollary:5.5}, \ref{corollary:5.6} and \ref{corollary:5.8}).

The structure of this paper is organized as follows. In Section \ref{pre}, we give some terminologies and some preliminary results which are needed for our proof. In Section 3,  we provide a method to construct recollements of exact categories from recollements of abelian categories. In Section 4, we prove Theorems \ref{thm:1.1} and \ref{thm:main1} mentioned in the introduction.  Finally, some applications of Theorem \ref{thm:main1} on upper triangular matrix rings are given in Section 5.

\section{Preliminaries}\label{pre}
The assumptions, the notation, and the definitions from this section will be used
throughout the paper.

\subsection{Exact categories}  The concept of an exact category is originally due to Quillen \cite{Quillen}, but the common reference for a simple
axiomatic description is \cite[Appendix A]{Keller} and an extensive treatment of the concept is also given in \cite{TBh10}. Roughly speaking, an \emph{exact category} is a pair $(\mathcal{A}, \mathcal{E})$ where $\mathcal{A}$ is an additive category and $\mathcal{E}$ is a class of ``short exact sequences": That is,
an actual kernel-cokernel pair $A\s{i}\mto B\s{p}\eto C$. In what follows, we call such a sequence an \emph{admissible exact sequence}, and call $A\mto B$ (resp., $B\eto C$) an \emph{admissible monomorphism} (resp., \emph{admissible
epimorphism}). Many authors use the alternate terms \emph{conflation}, \emph{inflation} and \emph{deflation}.
The class $\mathcal{E}$ of admissible exact sequences must satisfy exact axioms, for details, we refer the reader to \cite[Definition 2.1]{TBh10}, which are inspired by the properties of short exact sequences in any abelian category.

We often write $\mathcal{A}$ instead of $(\mathcal{A}, \mathcal{E})$ when we consider only one exact structure on $\mathcal{A}$.  An exact category $\mathcal{A}$ is called \emph{weakly idempotent complete} if every split monomorphism has a cokernel and
every split epimorphism has a kernel (see for instance \cite[Definition 2.2]{gj11}). For convenience, we will call this a \emph{WIC exact category}.

\begin{lem}\cite[Proposition 2.3]{gj11} The following are true for any WIC exact category:
\begin{enumerate}
\item If $gf$ is an admissible monomorphism, then $f$ is an admissible monomorphism.
\item If $gf$ is an admissible epimorphism, then $g$ is an admissible epimorphism.
\end{enumerate}
\end{lem}

Next, we recall the following definition, which is a particular case of Definitions 2.9 and 2.12 in \cite{Wand-wei-Zhang}.

\begin{definition}  Let $\A$ be a WIC exact category. A sequence $A\s{f}\to B\s{g}\to C$ in $\A$ is said to be \emph{right exact} if  there exist an admissible exact sequence $K\s{h_{2}}\mto B\s{g}\eto C$ and an admissible epimorphism $h_1: A\eto K$ such that $f = h_{2}h_{1}$.  Moreover, an additive covariant functor $F:\A \to \B$ between exact categories is called a \emph{right exact functor} if it takes those right exact sequences in $\A$ to sequences of the same ilk in $\B$. Dually, one can also define the left exact sequences and left exact functors.
\end{definition}

Let $n$ be a natural number. Following \cite{TBh10}, an $n+1$-term sequence $$X_{n}\s{f_{n}}\mto X_{n-1}\s{f_{n-1}}\to \cdots\to X_{2}\s{f_{2}}\to X_{1}\s{f_{1}}\eto X_{0}$$ in $\A$ is called \emph{exact} if it satisfies the following conditions:

\begin{enumerate}
\item Both $X_{n}\mto X_{n-1}\eto \ker(f_{n-2})$ and $\ker(f_{1})\mto X_{1}\eto X_{0}$ are admissible exact sequences.
 \item $\ker(f_{i})\mto X_{i}\eto \ker(f_{i-1})$ is also an admissible exact sequence for any $2\leq i\leq {n-2}$.
\end{enumerate}

Similarly, one can also define the exact sequence $\cdots\to X_{2}\s{f_{2}}\to X_{1}\s{f_{1}}\eto X_{0}$ with infinite length.
\subsection{Cotorsion pairs in exact categories} In this section, we always assume that $\A$ is an exact category. Recall from \cite{gj11} that a pair of subcategories $(\mathcal{X},\mathcal{Y})$ of $\mathcal{A}$ is said to be a \emph{cotorsion pair} if
\begin{center}
$\mathcal{X}={^{\perp}\mathcal{Y}}:=\{X\in{\mathcal{A}} \ |\ \Ext_{\mathcal{A}}^{1}(X,Y)=0 \ \textrm{for each} \ Y\in{\mathcal{Y}} \}$,
$\mathcal{Y}={\mathcal{X}^{\perp}}:=\{Y\in{\mathcal{A}} \ |\ \Ext_{\mathcal{A}}^{1}(X,Y)=0 \ \textrm{for each} \ X\in{\mathcal{X}} \}$.
\end{center}

A cotorsion pair $(\mathcal{X}, \mathcal{Y})$ is called \emph{complete} \cite{gj11} if for each $M\in{\mathcal{A}}$ there exist admissible exact sequences in $\mathcal{A}$
\begin{center}
$Y_{M}\mto X_{M}\s{f_{M}}\eto M \ \textrm{and}\  M\s{g^{M}}\mto Y^{M}\eto X^{M}$
\end{center}
such that $X_{M},X^{M}\in{\mathcal{X}}$ and $Y_{M},Y^{M}\in{\mathcal{Y}}$.
In this case, $f_{M}$ is called a \emph{special $\mathcal{X}$-precover}, while $g^{M}$ is called a \emph{special $\mathcal{Y}$-preenvelope}.

Recall that an object $P$ in $\A$ is called \emph{projective} provided that any admissible epimorphism ending at $P$ splits.
The exact category $\A$ is said to have \emph{enough projective objects} provided that each object $X$ fits into an admissible epimorphism $d: P \twoheadrightarrow X$ with $P$ projective. Dually one has the notions of injective objects and exact categories with enough injective objects.

The following lemma is essentially taken from \cite[Lemma 5.20]{Gobel}, where a variation of it
appears. The proof given there carries over to the present situation.

\begin{lem}\label{lem:salce-lemma} {\rm (Salce's Lemma)} Assume that $\mathcal{A}$ has enough projective objects and injective objects, and $(\mathcal{X}, \mathcal{Y})$ is a
cotorsion pair in $\mathcal{A}$. Then the following are equivalent.
\begin{enumerate}
\item $(\mathcal{X}, \mathcal{Y})$ is complete.

\item $\mathcal{X}$ is special precovering in $\mathcal{A}$.

\item $\mathcal{Y}$ is special preenveloping in $\mathcal{A}$.
\end{enumerate}
\end{lem}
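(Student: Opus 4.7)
The implications $(1)\Rightarrow(2)$ and $(1)\Rightarrow(3)$ are immediate from the definition of a complete cotorsion pair, so my plan is to establish $(2)\Rightarrow(3)$ and, dually, $(3)\Rightarrow(2)$; together, these two conditions constitute $(1)$. The two nontrivial implications are formally dual, so I would carry out the argument in one direction and indicate the dual manipulation. I expect the only genuine technical point to be verifying that pullbacks (and dually pushouts) in the weakly idempotent complete exact category $\mathcal{E}$ produce honest short exact sequences on \emph{both} sides of the resulting square; this is a standard consequence of the exact-category axioms recalled in the excerpt.

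For $(2)\Rightarrow(3)$, fix an arbitrary $M\in\mathcal{E}$. Using that $\mathcal{E}$ has enough injective objects, I would embed $M$ in an injective $I$, producing an admissible exact sequence $0\to M\to I\to M'\to 0$. Applying hypothesis $(2)$ to $M'$ yields a special $\mathcal{X}$-precover sequence $0\to Y\to X\to M'\to 0$ with $X\in\mathcal{X}$ and $Y\in\mathcal{Y}$. Next, I would form the pullback $P$ of $X\to M'$ along $I\to M'$; since the pullback of an admissible epimorphism along any map is again an admissible epimorphism in $\mathcal{E}$ (with the same kernel), the pullback square yields two short exact sequences
$$0\to Y\to P\to I\to 0 \quad\text{and}\quad 0\to M\to P\to X\to 0.$$
Two standard facts then finish this direction: every injective object of $\mathcal{E}$ lies in $\mathcal{Y}=\mathcal{X}^{\perp}$ because $\Ext_{\mathcal{E}}^{1}(X,I)=0$ for any injective $I$; and $\mathcal{Y}=\mathcal{X}^{\perp}$ is closed under extensions in $\mathcal{E}$ by the long exact $\Ext$-sequence. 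Applied to the first of the two sequences above, these give $P\in\mathcal{Y}$, whence the second sequence is a special $\mathcal{Y}$-preenvelope of $M$.

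The converse $(3)\Rightarrow(2)$ is the formal dual, which I would only sketch: fix an admissible epimorphism from a projective $P_{0}$ onto $M$, apply $(3)$ to its kernel to obtain a special $\mathcal{Y}$-preenvelope, and form the pushout; the symmetric observations that projectives of $\mathcal{E}$ lie in $\mathcal{X}$ and that $\mathcal{X}$ is closed under extensions then yield a special $\mathcal{X}$-precover of $M$. The main (and only) place where care is needed throughout is the bookkeeping of admissible monomorphisms and epimorphisms under pullback and pushout in $\mathcal{E}$; weak idempotent completeness of $\mathcal{E}$ does not enter the argument beyond providing the ambient well-behaved exact structure in which these constructions make sense.
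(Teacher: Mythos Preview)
Your proposal is correct and is precisely the classical Salce argument via pullback/pushout; the paper does not give its own proof but simply refers to \cite[Lemma 5.20]{Gobel}, whose argument is exactly the one you wrote out. There is nothing to add.
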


A cotorsion pair $(\mathcal{X}, \mathcal{Y})$ in $\A$ is called \emph{hereditary} \cite{gj11}  if $\mathcal{X}$ is closed under taking kernels of admissible epimorphisms between objects of
$\mathcal{X}$ and if $\mathcal{Y}$ is closed under taking cokernels of admissible monomorphisms between objects of $\mathcal{Y}$. In this case, we say $\mathcal{X}$ is \emph{resolving}, and $\mathcal{Y}$ is \emph{coresolving}.

The following lemma is essentially taken from \cite[Lemma 2.3]{gj16}, where a variation of it
appears.

\begin{lem}\label{lem:hereditary-cotorsion-pair} Let $(\mathcal{X}, \mathcal{Y})$ be a
cotorsion pair in $\mathcal{A}$. If $\mathcal{A}$ has enough projective objects or enough injective objects, then $(\mathcal{X}, \mathcal{Y})$ is hereditary if and only if $\Ext_{\mathcal{A}}^{2}(X,Y)=0$ for every $X\in{\mathcal{X}}$, $Y\in{\mathcal{Y}}$.
\end{lem}

\subsection{Exact model structures}

Recall from \cite{gj11} that an exact model structure on a WIC exact category $\mathcal{A}$ is a model structure in the sense of \cite[Definition 1.1.3]{hv99} in which each of the following holds.
\begin{enumerate}
\item A map is a (trivial) cofibration if and only if it is an admissible monomorphism with a (trivially) cofibrant cokernel.

\item A map is a (trivial) fibration if and only if it is an admissible epimorphism with a (trivially) fibrant kernel.
\end{enumerate}

Gillespie showed in \cite{gj11} that the correspondence between model structures and cotorsion pairs from \cite{ho02} carries over to the case of WIC exact categories as below.

\begin{thm}\cite[Corollary 3.4]{gj11}\label{thm:2.4} Let $\A$ be a WIC exact category. There is a one-to-one correspondence between exact model structures on $\mathcal{A}$ and complete cotorsion pairs $\mathcal{(Q, R \cap W)}$ and $\mathcal{(Q \cap W, R)}$ where $\mathcal{W}$ is a thick subcategory of $\mathcal{A}$. Given a model structure, $\mathcal{Q}$ is the class of cofibrant objects, $\mathcal{R}$ the class of fibrant objects and $\mathcal{W}$ the class of trivial objects. Conversely, given the cotorsion pairs with $\mathcal{W}$ thick, a cofibration (resp., trivial cofibration) is an admissible monomorphism with a cokernel in $\mathcal{Q}$ (resp., $\mathcal{Q \cap W}$), and a fibration (resp., trivial fibration) is an admissible epimorphism with a kernel in $\mathcal{R}$ (resp., $\mathcal{R \cap W}$).
\end{thm}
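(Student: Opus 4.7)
The plan is to establish the bijection by constructing maps in both directions and verifying they are mutually inverse. Given an exact model structure on $\mathcal{E}$, I would extract the triple by setting $\mathcal{Q}$ to be the class of cofibrant objects (those $Q$ for which $0\to Q$ is a cofibration), $\mathcal{R}$ to be the class of fibrant objects, and $\mathcal{W}$ to be the class of trivial objects, i.e.\ objects $W$ for which $0\to W$ is a weak equivalence. Using the characterisation of (trivial) (co)fibrations in terms of admissible monos/epis with (trivially) (co)fibrant (co)kernels, the $\Ext^{1}$-orthogonality of $(\mathcal{Q}\cap\mathcal{W},\mathcal{R})$ and $(\mathcal{Q},\mathcal{W}\cap\mathcal{R})$ would follow from the lifting axiom MC4 applied to an arbitrary admissible short exact sequence, and completeness of both cotorsion pairs would be read off from the factorisation axiom MC5. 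Thickness of $\mathcal{W}$ (closure under retracts, direct summands, and the two-out-of-three property with respect to admissible short exact sequences in $\mathcal{E}$) would follow from the corresponding properties of weak equivalences, transported from maps to objects via the admissible short exact sequences $0\to W\to W'\to W''\to 0$.

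For the reverse direction, given complete cotorsion pairs $(\mathcal{Q},\mathcal{W}\cap\mathcal{R})$ and $(\mathcal{Q}\cap\mathcal{W},\mathcal{R})$ with $\mathcal{W}$ thick, I would define the classes of (trivial) (co)fibrations exactly as in the statement and declare a map to be a weak equivalence if it factors as a trivial cofibration followed by a trivial fibration. I would then verify the model category axioms of \cite[Definition 1.1.3]{hv99}: closure of all three distinguished classes under retracts is straightforward from weak idempotent completeness of $\mathcal{E}$; the lifting axiom MC4 translates into $\Ext^{1}_{\mathcal{E}}$-vanishing and hence follows from the cotorsion pair orthogonality, using that admissible monos/epis with appropriate (co)kernels correspond to elements of $\Ext^{1}$; the factorisation axiom MC5 follows from completeness of both cotorsion pairs applied to the domain or codomain of a map.

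The main obstacle will be verifying the two-out-of-three axiom MC2 for the defined class of weak equivalences. This is the core technical content and uses thickness of $\mathcal{W}$ decisively: one must show that if two of three composable maps are weak equivalences then so is the third. The strategy is to reduce to statements about the trivial objects $\mathcal{W}$ using Gillespie's characterisation: a map $f\colon X\to Y$ is a weak equivalence if and only if in some (equivalently every) factorisation of $f$ into an admissible mono followed by an admissible epi, the cokernel of the mono lies in $\mathcal{Q}\cap\mathcal{W}$ and the kernel of the epi lies in $\mathcal{W}\cap\mathcal{R}$; equivalently, $f$ admits a fibrant-cofibrant replacement that is a homotopy equivalence. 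A related subtlety is that $\mathcal{E}$ lacks arbitrary kernels and cokernels, so every factorisation must be built from admissible short exact sequences via pushouts/pullbacks of admissible maps, invoking weak idempotent completeness to guarantee the needed splittings. Once MC2 is established, MC3 (retract stability) and the uniqueness of the correspondence fall out from routine diagram chasing, and the bijection with the original model structure follows because a map is a weak equivalence in the original structure precisely when its fibrant-cofibrant replacement has trivial kernel and cokernel in $\mathcal{W}$.
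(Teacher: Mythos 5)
First, note that the paper does not prove this statement at all: Theorem \ref{thm:2.4} is quoted verbatim from Gillespie \cite[Corollary 3.4]{gj11}, which in turn adapts Hovey's argument from \cite{ho02}, so the only fair comparison is with that standard proof. Your overall plan is indeed the Hovey--Gillespie strategy (extract $\mathcal{Q},\mathcal{R},\mathcal{W}$ from the model structure, get orthogonality from lifting and completeness from factorization; conversely define the distinguished classes from the two cotorsion pairs and verify the axioms, the only serious one being two-out-of-three). As a sketch of the easy direction this is fine, modulo the small omission that one must also check the two pairs are genuinely cotorsion pairs, i.e.\ that each class is the full $\Ext^{1}$-orthogonal of the other, which uses the factorizations again and not just MC4.

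The genuine gap is in the step you yourself identify as the core: MC2. You defer it to a ``characterisation'' that is not correct as stated and partly circular. You claim a map is a weak equivalence iff in some (equivalently every) factorization into an admissible mono followed by an admissible epi the cokernel lies in $\mathcal{Q}\cap\mathcal{W}$ and the kernel in $\mathcal{W}\cap\mathcal{R}$. The ``every'' version is false: e.g.\ the identity weak equivalence $0\to 0$ factors as $0\rightarrowtail W\twoheadrightarrow 0$ for any $W\in\mathcal{W}$, and $W$ need not lie in $\mathcal{Q}\cap\mathcal{W}$ or $\mathcal{W}\cap\mathcal{R}$; the ``some'' version is literally your definition of weak equivalence, so it gives no leverage for two-out-of-three. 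The correct intermediate statement is Hovey's Lemma 5.8 (quoted in this paper as Lemma \ref{lem4.3}): a map is a weak equivalence iff it factors as an admissible monomorphism with cokernel in $\mathcal{W}$ followed by an admissible epimorphism with kernel in $\mathcal{W}$, and proving this equivalence, and then deducing two-out-of-three from it using thickness of $\mathcal{W}$, is precisely the technical content of \cite[Section 5]{ho02} and \cite[Section 3]{gj11} (a sequence of pushout/pullback lemmas showing trivial cofibrations and trivial fibrations compose, are stable under cobase/base change, and interact correctly). Your alternative phrasing ``$f$ admits a fibrant--cofibrant replacement that is a homotopy equivalence'' presupposes the model structure (or at least its homotopy category) and so cannot be used before MC2 is established. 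Until this block is actually carried out, the proposal is an outline of the known proof rather than a proof.
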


Due to the above Hovey's one-to-one correspondence, we will often not distinguish between the Hovey triple and the actual model structure on a WIC exact category $\mathcal{A}$. For example, we may say that $\mathcal{M =(Q, W, R)}$
is an exact model structure and understand this to mean the model structure
associated to the Hovey triple $\mathcal{(Q \cap W, R)}$ on $\mathcal{A}$. On the other hand, we may say that an exact model structure is \emph{hereditary} if its associated Hovey triple is hereditary.

Let $\mathcal{W}$ be the class of weak equivalences. The homotopy category of the model category is the localization $\mathcal{C}[\mathcal{W}^{-1}]$ and is denoted by $\mathrm{Ho}(\mathcal{M})$.
By \cite[Section 4.2]{gj16b}, we know that if $\mathcal{M=(Q,W,R)}$ is a hereditary Hovey triple, then $\mathrm{Ho}(\mathcal{M})$ is a triangulated category and it is triangle equivalent to the stable category $(\mathcal{Q}\cap \mathcal{R})/\omega$, where $\omega=\mathcal{Q}\cap\mathcal{W}\cap \mathcal{R}$ is the class of projective-injective objects.

\subsection{Recollements of triangulated categories} Loosely, a recollement is an ``attachment'' of two triangulated categories. The standard reference is \cite{BBD}. Let $\mathcal{T}',~ \mathcal{T},~ \mathcal{T}''$ be triangulated categories. We give the definition that appeared in \cite{he05} based on localization and colocalization sequences.

\begin{definition} Let $\mathcal{T}'\stackrel{F}\rightarrow \mathcal{T}\stackrel{G}\rightarrow \mathcal{T}''$ be a sequence of triangulated functors between triangulated categories. We say it is a \emph{localization sequence} when there exist right adjoints $F_{\rho}$ and $G_{\rho}$ giving a diagram of functors as below with the listed properties.

$$\xymatrix@C=50pt@R=50pt{\ \mathcal{T}'\ar[r]^{F}&\ \mathcal{T}\ar@/^1pc/[l]^{F_{\rho}}\ar[r]^{G} &\ \mathcal{T}''\ar@/^1pc/[l]^{G_{\rho}}}$$

\begin{enumerate}
\item The right adjoint $F_{\rho}$ of $F$ satisfies $F_{\rho}\circ F=1_{\mathcal{T}'}$.

\item The right adjoint $G_{\rho}$ of $G$ satisfies $G\circ G_{\rho} =1_{\mathcal{T}''}$.

\item For any object $X\in \mathcal{T}$, we have $GX=0$ if and only if $X\cong FX'$ for some $X'\in \mathcal{T}'$.
\end{enumerate}

A colocalization sequence is the dual. That is, there must exist left adjoints $F_{\lambda}$ and $G_{\lambda}$ with the analogous properties.
\end{definition}
It is true that if $\mathcal{T}'\stackrel{F}\rightarrow \mathcal{T}\stackrel{G}\rightarrow \mathcal{T}''$ is a localization sequence then $\mathcal{T}''\stackrel{G_{\rho}}\rightarrow \mathcal{T}\stackrel{F_{\rho}}\rightarrow \mathcal{T}'$ is a colocalization sequence and if $\mathcal{T}'\stackrel{F}\rightarrow \mathcal{T}\stackrel{G}\rightarrow \mathcal{T}''$  is a colocalization sequence then $\mathcal{T}''\stackrel{G_{\lambda}}\rightarrow \mathcal{T}\stackrel{F_{\lambda}}\rightarrow \mathcal{T}'$ is a localization sequence.
This brings us to the definition of a recollement where the sequence of functors $\mathcal{T}'\stackrel{F}\rightarrow \mathcal{T}\stackrel{G}\rightarrow \mathcal{T}''$ is both a localization sequence and a colocalization sequence.

 \begin{definition} Let $\mathcal{T}'\stackrel{F}\rightarrow \mathcal{T}\stackrel{G}\rightarrow \mathcal{T}''$ be a sequence of exact functors between triangulated categories. We say $\mathcal{T}'\stackrel{F}\rightarrow \mathcal{T}\stackrel{G}\rightarrow \mathcal{T}''$ induces a \emph{recollement} if it is both a localization sequence and a colocalization sequence as shown in the picture
$$\xymatrix@C=50pt@R=50pt{\ \mathcal{T}'\ar[r]^{F}&\ \mathcal{T}\ar@/^1pc/[l]^{F_{\rho}}\ar@/_1pc/[l]_{F_{\lambda}}\ar[r]^{G} &\ \mathcal{T}''\ar@/^1pc/[l]^{G_{\rho}}\ar@/_1pc/[l]_{G_{\lambda}}}$$
\end{definition}

So the idea is that a recollement is a colocalization sequence ``glued'' with a localization sequence.

\subsection{Recollements of exact categories} In this subsection, we recall the definition of a  recollement situation in the context of exact categories (see \cite{Wand-wei-Zhang}). For an additive functor $F:\mathcal{A}\to \mathcal{C}$ between additive categories, we denote by $\textrm{im}F=\{C\in {\mathcal{C}} \ | \ C\cong F(A)\ \textrm{for} \ \textrm{some} \ A\in{\mathcal{A}}\}$ the essential image of $F$ and by $\textrm{ker}F=\{A\in\mathcal{A} \ | \ F(A)=0\}$ the kernel of $F$.

\begin{definition}\cite[Definition 3.1]{Wand-wei-Zhang}\label{df:recollements-exact}
Let $\mathcal{A},\mathcal{B},\mathcal{C}$ be three WIC exact categories.  A recollement of $\C$
relative to $\A$ and $\B$, denoted by $(\mathcal{A},\mathcal{C},\mathcal{B})$, is a diagram
$$\xymatrix@C=50pt@R=50pt{\ \mathcal{A}\ar[r]^{i_*}&\ \mathcal{C}\ar@/^1pc/[l]^{i^!}\ar@/_1pc/[l]_{i^*}\ar[r]^{j^*} &\ \mathcal{B}\ar@/^1pc/[l]^{j_*}\ar@/_1pc/[l]_{j_!}}$$
given by two exact functors $i_*$ and $j^*$, two right exact functors $i^*$, $j_!$ and two left exact functors $i^!$, $j_*$, which satisfies the following conditions:
 \begin{enumerate}
\item $(i^*,i_*)$, $(i_*,i^!)$, $(j_!,j^*)$ and $(j^*,j_*)$ are adjoint pairs;

\item $i_*$, $j_!$ and $j_*$ are fully faithful;

\item ${\rm im}i_*={\rm ker}j^*$;

\item For any $C\in\mathcal{C}$, there exists an exact sequence in $\C$
\begin{center}
{$ i_{*}i^{!}(C)\s{\sigma_{C}} \mto C \stackrel{\eta_{C}} \to j_{*}j^{*}(C)\eto i_{*}(A)$}
\end{center}
with $A\in{\A}$, where $\sigma_{C}$ and $\eta_{C}$ are given by the adjunction morphisms;

\item For any $C\in\mathcal{C}$, there exists an exact sequence in $\C$
\begin{center}
{$i_{*}(A')\mto j_{!}j^{*}(C)\stackrel{\varepsilon_{C}} \to C\s{\delta_{C}}\eto i_{*}i^{*}(C)$}
\end{center}
with $A'\in{\A}$, where $\varepsilon_{C}$ and $\delta_{C}$ are given by the adjunction morphisms.
 \end{enumerate}
 
 In this case one says that $(\A,\C,\B)$ is a \emph{recollement of exact categories}.
 \end{definition}

 If the categories $\mathcal{A},\mathcal{B}$ and $\mathcal{C}$ are abelian, then Definition \ref{df:recollements-exact} coincides with the definition of recollement of abelian categories.
We refer to \cite[Section 2.1]{Psaroudakis} for examples of recollements of abelian categories. For examples of recollement of exact categories, we refer to Section 3.

\vspace{2mm}
\textbf{Notation for units and counits}. Throughout, we denote by $\delta:1_{\C}\to i_{*}i^{*} $ (resp., $\eta:1_{\C}\to j_{*}j^{*} $), the unit of the adjoint pair $(i^{*},i_{*})$ (resp., $(j^{*},j_{*})$), and by $\sigma:i_{*}i^{!}\to 1_{\C}$ (resp., $\varepsilon: j_{!} j^{*}\to 1_{\C}$), the counit of the adjoint pair $(i_{*},i^{!})$ (resp., $(j_{!},j^{*})$).

\vspace{2mm}
We list some properties of recollements (see \cite[Lemma 3.3]{Wand-wei-Zhang}), which will be used in the sequel.

\begin{lem}\label{lem:property-of-recollement1} The following are true for any recollement $(\mathcal{A},\mathcal{C},\mathcal{B})$ of exact categories.
\begin{enumerate}
\item $i^*j_{!}=0=i^{!}j_{*}$.
\item All the natural transformations
$$i^{*}i_{*}\rightarrow 1_{\mathcal{A}}, \ 1_{\mathcal{A}}\to i^{!}i_{*}, \ 1_{\mathcal{B}}\to j^{*}j_{!}, j^{*}j_{*}\rightarrow 1_{\mathcal{B}}$$
are natural isomorphisms.
\item  $i^*$ preserves projective objects and $i^!$ preserves injective objects.
\item $j_!$ preserves projective objects and $j_*$ preserves injective objects.
\item If $i^!$ is exact, then $j_*$ is exact.
\end{enumerate}
\end{lem}

In the following sections, we always assume that $(\mathcal{A},\mathcal{C},\mathcal{B})$ is a recollement of exact categories defined in Definition \ref{df:recollements-exact}, where $\mathcal{A},\mathcal{B},\mathcal{C}$ are WIC exact categories.

\section{Constructing recollements of exact categories from recollements of abelian categories}
We begin this section with the following easy observation.

\begin{lem}\label{lem:2.2'} Let $C$ be an object in $\C$.
\begin{enumerate}
\item If $i^{*}j_{*}j^{*}(C)=0$, then $\eta_{C}: C\to j_{*} j^{*}(C)$ is an admissible epimorphism. Thus, there exists an admissible exact sequence $ i_{*}i^{!}(X)\s{\sigma_{X}} \mto X \stackrel{\eta_{X}} \eto j_{*}j^{*}(X)$ in $\C$.
\item If $i^{!}j_{!}j^{*}(C)=0$, then $\varepsilon_{C}: j_{!} j^{*}(C)  \to C$ is an admissible  monomorphism. Thus, there exists an admissible exact sequence $j_{!}j^{*}(X)\stackrel{\varepsilon_{X}} \mto X\s{\delta_{X}}\eto i_{*}i^{*}(X)$ in $\C$.
\end{enumerate}
\end{lem}
\begin{proof} We only prove (1), and the proof of (2) is similar. Note that there exists an exact sequence
$ i_{*}i^{!}(C)\s{\sigma_{C}} \mto C \stackrel{\eta_{C}} \to j_{*}j^{*}(C)\s{f}\eto i_{*}(A)$ in $\C$
with $A\in{\A}$. Hence $i^{*}(f):i^{*}j_{*}j^{*}(C)\to i^{*}i_{*}(A)$ is an admissible epimorphism in $\A$. Since $i^{*}j_{*}j^{*}(C)=0$ by hypothesis, it follows that $A\cong i^{*}i_{*}(A)=0$. So $\eta_{C}: C\to j_{*} j^{*}(C)$ is an admissible epimorphism, as desired.
\end{proof}

\begin{lem}\label{lem:2.2''} Let $M_1\stackrel{f}\mto M_2 \stackrel{g}\eto M_3$ be an admissible exact sequence in $\C$.
 \begin{enumerate}
\item If $\eta_{M_{i}}: M_{i}\to j_{*} j^{*}(M_{i})$ is admissible epic for $i=1,2,3$, then $i^{!}(M_1)\stackrel{i^{!}(f)}\mto i^{!}(M_2) \stackrel{i^{!}(g)}\eto i^{!}(M_3)$ is an admissible exact sequence in $\mathcal{A}$.
\item If $\varepsilon_{M_{i}}: j_{!} j^{*}(M_{i})  \to M_{i}$ is admissible monic for $i=1,2,3$, then $i^{*}(M_1)\stackrel{i^{*}(f)}\mto i^{*}(M_2) \stackrel{i^{*}(g)}\eto i^{*}(M_3)$ is an admissible exact sequence in $\mathcal{A}$.
\end{enumerate}
\end{lem}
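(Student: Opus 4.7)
The plan is to reduce both parts to the $3\times 3$ (nine) lemma applied to a commutative diagram in $\mathcal{C}$ whose columns are extracted from the four-term sequences of Lemma~\ref{lem:property-of-recollement1}(4).

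For (1), the hypothesis that each $\eta_{M_i}$ is epic, combined with the faithfulness of $i_*$, collapses the second sequence of Lemma~\ref{lem:property-of-recollement1}(4) into the short exact sequence
$$0\to i_*i^!(M_i)\to M_i\to j_*j^*(M_i)\to 0.$$
I would use these, for $i=1,2,3$, as the columns of a $3\times 3$ diagram whose middle row is the given sequence $0\to M_1\to M_2\to M_3\to 0$. Since $j^*$ is exact and $j_*$ is left exact by Lemma~\ref{lem:property-of-recollement1}(2), the bottom row is exact at its two left-most positions; the epimorphism on the right follows from the naturality of $\eta$, which factors the composite epimorphism $M_2\to M_3\to j_*j^*(M_3)$ through $j_*j^*(M_2)\to j_*j^*(M_3)$, forcing the latter to be epic. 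The $3\times 3$ lemma then forces the top row $0\to i_*i^!(M_1)\to i_*i^!(M_2)\to i_*i^!(M_3)\to 0$ to be short exact. Since $i_*$ is exact and fully faithful (Definition~\ref{df:recollements}(2)), it is faithful and therefore reflects exactness, delivering the desired sequence in $\mathcal{A}$.

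Part (2) is strictly dual. Monicity of each $\varepsilon_{M_i}$ collapses the first sequence of Lemma~\ref{lem:property-of-recollement1}(4) into $0\to j_!j^*(M_i)\to M_i\to i_*i^*(M_i)\to 0$, and these serve as the columns of a second $3\times 3$ diagram. The top row $j_!j^*(M_1)\to j_!j^*(M_2)\to j_!j^*(M_3)\to 0$ is exact at its right-most two positions by right exactness of $j_!$ together with the exactness of $j^*$; monicity on the left is extracted from the naturality of $\varepsilon$, which factors the composite monomorphism $j_!j^*(M_1)\hookrightarrow M_1\hookrightarrow M_2$ through $j_!j^*(M_2)$. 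The $3\times 3$ lemma and the same reflection-of-exactness argument for $i_*$ then finish the proof.

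The only step where the hypotheses are substantively used is in collapsing the four-term sequences of Lemma~\ref{lem:property-of-recollement1}(4) to the short exact columns; everything else is diagram chasing. The mildly non-automatic point—the main obstacle worth naming—is that $j_*$ (in part (1)) and $j_!$ (in part (2)) are only one-sided exact \emph{a priori}, so one of the three exactness positions in the outer rows is not immediate and must be recovered from the diagram via the naturality factorizations above before the $3\times 3$ lemma can be invoked.
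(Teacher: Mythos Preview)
Your proposal is correct and follows essentially the same route as the paper: set up the $3\times 3$ diagram whose rows (in the paper's orientation; columns in yours) are the short exact sequences $0\to i_*i^!(M_i)\to M_i\to j_*j^*(M_i)\to 0$ coming from the hypothesis, use the naturality of $\eta$ to see that $j_*j^*(g)$ is epic, and then read off exactness of the $i_*i^!$-sequence. The only cosmetic difference is that the paper finishes by explicitly comparing $i_*(\mathrm{coker}\,i^!(f))$ with $i_*i^!(M_3)$ and invoking $i^*i_*\cong 1_{\mathcal{A}}$, whereas you package the same step as ``$i_*$ is exact and fully faithful, hence reflects exactness''; these are equivalent.
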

\begin{proof}  We only prove (1), and the proof of (2) is similar. Note that we have a left exact sequence
$i^{!}(M_1)\stackrel{i^{!}(f)}\mto i^{!}(M_2)\stackrel{i^{!}(g)} \to i^{!}(M_3)$ in $\mathcal{A}$. If we set $N:=\textrm{coker}(i^{!}(f))$, to prove the exactness of the left sequence $i^{!}(M_1)\stackrel{i^{!}(f)}\mto i^{!}(M_2) \stackrel{i^{!}(g)}\to i^{!}(M_3)$,
 it suffices to show that $i^{!}(M_3)\cong N$. By hypothesis, we have the following commutative diagram
$$\xymatrix{
i_{*}i^{!}(M_1) \ar@{>->}[r]\ar@{>->}[d]&M_1 \ar@{>->}[d]\ar@{->>}[r]^{\eta_{M_1}}&j_{*}j^{*}(M_1)\ar@{>->}[d]\\
i_{*}i^{!}(M_2) \ar@{>->}[r]\ar[d]&M_2 \ar@{>->}[d]^{g}\ar@{->>}[r]^{\eta_{M_2}}&j_{*}j^{*}{(M_2)}\ar[d]^{j_{*}j^{*}{(g)}}\\
i_{*}i^{!}(M_3) \ar@{>->}[r]&M_3\ar@{->>}[r]^{\eta_{M_3}}&j_{*}j^{*}(M_3),\\}$$
where all rows are admissible exact sequences.
Since $j_{*}j^{*}{(g)}\eta_{M_2}=\eta_{M_3}g$ is admissible epic, so is $j_{*}j^{*}{(g)}$. Hence the sequence $i_{*}i^{!}(M_1) \mto i_{*}i^{!}(M_2)\to i_{*}i^{!}(M_3)$ in the above diagram is an admissible exact sequence by \cite[Corollary 8.13]{TBh10}. Since $i_{*}$ is an exact functor, $i_{*}i^{!}(M_1) \mto i_{*}i^{!}(M_2)\eto i_{*}(N)$ is an admissible exact sequence in $\mathcal{C}$. Thus $i_{*}(N)\cong i_{*}i^{!}{(M_3)}$, and so $N\cong i^{*}i_{*}{(N)}\cong i^{*}i_{*}i^{!}{(M_3)}\cong i^{!}(M_3)$, as desired.
\end{proof}

\begin{prop}\label{prop:2.5} Let $(\mathcal{A},\mathcal{C},\mathcal{B})$ be a recollement of exact categories.
 \begin{enumerate}
\item The following conditions are equivalent.
 \begin{enumerate}
\item $i^{!}$ is an exact functor.

\item $i^{*}j_{*}=0$.
\item $i^{*}j_{*}j^{*}=0$.
\end{enumerate}

\item The following conditions are equivalent.
 \begin{enumerate}
\item $i^{*}$ is an exact functor.
\item $i^{!}j_{!}=0$.
\item $i^{!}j_{!}j^{*}=0$.
\end{enumerate}
 \end{enumerate}
\end{prop}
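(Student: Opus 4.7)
The plan is to treat the two parts in parallel, since they are formally dual; we spell out (1) in detail, and part (2) follows by interchanging the roles of the adjunctions $(i^{*},i_{*})$ and $(i_{*},i^{!})$, as well as $(j_{!},j^{*})$ and $(j^{*},j_{*})$. Throughout I rely only on the properties of recollements collected in Lemma \ref{lem:property-of-recollement1} together with Lemmas \ref{lem:2.2'} and \ref{lem:2.2''}.

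For part (1), the implication (a) $\Rightarrow$ (b) is already recorded in Lemma \ref{lem:property-of-recollement1}(6): exactness of $i^{!}$ forces $i^{*}j_{*}=0$. The implication (b) $\Rightarrow$ (c) is a formality, since $i^{*}j_{*}j^{*}=(i^{*}j_{*})j^{*}=0$. The substantive direction is (c) $\Rightarrow$ (a). Recall from Lemma \ref{lem:property-of-recollement1}(2) that $i^{!}$ is always left exact, so to prove exactness it suffices to show that $i^{!}$ preserves epimorphisms and, more precisely, sends any short exact sequence $0\to M_{1}\to M_{2}\to M_{3}\to 0$ in $\mathcal{C}$ to a short exact sequence in $\mathcal{A}$. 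The assumption $i^{*}j_{*}j^{*}=0$ is functorial, hence $i^{*}j_{*}j^{*}(M_{i})=0$ for each $i=1,2,3$. By Lemma \ref{lem:2.2'}(1) each unit $\eta_{M_{i}}\colon M_{i}\to j_{*}j^{*}(M_{i})$ is then an epimorphism, and Lemma \ref{lem:2.2''}(1) produces the desired exact sequence $0\to i^{!}(M_{1})\to i^{!}(M_{2})\to i^{!}(M_{3})\to 0$. This proves (1).

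Part (2) is verified by the same three-step strategy, dualized. The implication (a) $\Rightarrow$ (b) is the other half of Lemma \ref{lem:property-of-recollement1}(6), and (b) $\Rightarrow$ (c) is immediate. For (c) $\Rightarrow$ (a) one uses that $i^{*}$ is always right exact by Lemma \ref{lem:property-of-recollement1}(2), so it suffices to show that $i^{*}$ preserves monomorphisms in every short exact sequence; the assumption $i^{!}j_{!}j^{*}=0$ together with Lemma \ref{lem:2.2'}(2) makes each counit $\varepsilon_{M_{i}}\colon j_{!}j^{*}(M_{i})\to M_{i}$ a monomorphism, and Lemma \ref{lem:2.2''}(2) then delivers exactness of $0\to i^{*}(M_{1})\to i^{*}(M_{2})\to i^{*}(M_{3})\to 0$.

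There is no real technical obstacle in this argument: once Lemmas \ref{lem:2.2'} and \ref{lem:2.2''} are in place, the proposition is essentially a bookkeeping exercise. The only point that requires a moment's thought is recognizing that condition (c), although stated as vanishing of a composition of functors, automatically yields vanishing on every object of the form $j^{*}(M_{i})$ appearing in a given short exact sequence, so that the hypotheses of Lemma \ref{lem:2.2'} are met uniformly and the exactness hypothesis of Lemma \ref{lem:2.2''} propagates to all three terms simultaneously.
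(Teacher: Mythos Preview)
Your proof is correct and follows essentially the same approach as the paper: for $(a)\Rightarrow(b)$ the paper cites the proof of \cite[Proposition~8.8]{FP2004} whereas you invoke Lemma~\ref{lem:property-of-recollement1}(6) (which records the same fact), and for $(c)\Rightarrow(a)$ both you and the paper appeal directly to Lemmas~\ref{lem:2.2'} and~\ref{lem:2.2''}.
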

\begin{proof}  We only prove (1), and the proof of (2) is similar.

$(a)\Rightarrow(b)$. The proof is model on that of Proposition 8.8 in \cite{FP2004}. Let $X$ be an object in $\B$. It follows that $\delta_{j_{*}(X)}:j_{*}(X)\eto i_{*}i^{*}j_{*}(X)$ is an admissible epimorphism. Since $i^{!}$ is an exact functor, $i^{!}(\delta_{j_{*}(X)}):i^{!}j_{*}(X)\eto i^{!}i_{*}i^{*}j_{*}(X)$ is also admissible epic. Note that $i^{!}j_{*}=0$ by Lemma \ref{lem:property-of-recollement1}(1). Thus $i^{*}j_{*}(X)\cong{i^{!}i_{*}i^{*}j_{*}(X)}=0$, as desired.

$(b)\Rightarrow(c)$ is trivial.

$(c)\Rightarrow(a)$ holds by Lemmas \ref{lem:2.2'} and \ref{lem:2.2''}.
\end{proof}

\begin{rem}\label{remark:2.6} We note that Proposition \ref{prop:2.5} not only generalizes \cite[Proposition 8.8]{FP2004} from recollements of abelian categories to the setting of exact categores, but also refines it by deleting two superfluous assumptions ``with enough projectives" and ``with enough injectives". Also, our proof here is different from that in \cite{FP2004}.
\end{rem}

We are now in a position to state and prove the main result of this section, which provides a method to construct recollements of exact categories from recollements of abelian categories.
\begin{thm}\label{thm:3.5} Given the following recollement of abelian categories
$$\xymatrix@C=50pt@R=50pt{\ \mathcal{A}\ar[r]^{i_*}&\ \mathcal{C}\ar@/^1pc/[l]^{i^!}\ar@/_1pc/[l]_{i^*}\ar[r]^{j^*} &\ \mathcal{B},\ar@/^1pc/[l]^{j_*}\ar@/_1pc/[l]_{j_!} }\eqno{(3.1)}$$
we set $\mathcal{C}_{1}:=\{C\in {\mathcal{C}} \ | \ i^{*}j_{*} j^{*}(C)=0\}$ and $\mathcal{B}_1:=\{B\in{\B} \ | \ B\cong j^{*}(C)  \ \textrm{for} \ \textrm{some} \ C\in{\mathcal{C}_1}\}$.
 \begin{enumerate}
 \item The follow are equivalent:
 \begin{enumerate}
 \item $i^{!}:\C\to\A$ is an exact functor;
 \item $\mathcal{C}_{1}=\C$;
 \item $\mathcal{B}_{1}=\B$.
  \end{enumerate}
\item If $i^{!}:\C\to\A$ is not an exact functor, then the recollement (3.1) can induce the following recollement of exact categories:

$$\xymatrix@C=50pt@R=50pt{\ \mathcal{A}\ar[r]^{i_*}&\ \mathcal{C}_{1}\ar@/^1pc/[l]^{i^!}\ar@/_1pc/[l]_{i^*}\ar[r]^{j^*} &\ \mathcal{B}_{1}\ar@/^1pc/[l]^{j_*}\ar@/_1pc/[l]_{j_!}}\eqno{(3.2)}$$

\noindent such that both $i^!:\C_{1}\to\A$ and $j_*:\B_{1}\to\C_{1}$ are exact functors.
 \end{enumerate}
\end{thm}

\begin{proof} (1) $(a)\Rightarrow(b)$ follows from Proposition \ref{prop:2.5}(1) and $(b)\Rightarrow(c)$ is trivial. To prove $(c)\Rightarrow(a)$, it suffices to show $i^{*}j_{*}(B)=0$ for any $B\in{\B}$ by Proposition \ref{prop:2.5}(1). Since $\mathcal{B}_{1}=\B$ by (c), there exists $C\in{\C_{1}}$ such that $B\cong{j^{*}(C)}$. So $i^{*}j_{*}(B)\cong i^{*}j_{*}j^{*}(C)=0$, as desired.

(2) We first claim that $\mathcal{B}_{1}$ and $\mathcal{C}_{1}$ are WIC exact categories with exact structures induced from $\B$ and $\C$, respectively. By the additive of the functors $i^{*},j_{*}$ and $j^{*}$, it suffices to check that $\mathcal{B}_{1}$ and $\mathcal{C}_{1}$ are closed under extensions in $\B$ and $\C$, respectively. Let $0\to N_1\stackrel{f}\to N_2 \stackrel{g}\to N_3\to 0$ be an exact sequence in $\B$ with $N_{1},N_{3}\in \mathcal{B}_{1}$. Then there exist $M_1,M_3\in{\mathcal{C}_{1}}$ such that $N_1\cong j^{*}(M_1)$ and $N_3\cong j^{*}(M_3)$. Thus there exists $M_2\in\mathcal{C}$ such that $N_2\cong j^{*}(M_2)$ by Lemma \ref{lem:property-of-recollement1}(3). Thanks to \cite[Proposition 4.3]{FP2004}, we have the following commutative diagram with exact rows and columns
$$\xymatrix{&&&0\ar[d]&\\
&&j_{!}(N_1)\ar[r]\ar[d]&j_{*}(N_1)\ar[d]&\\
0\ar[r]& i_{*}i^{!}j_{!}(N_2) \ar[r]\ar[d]&j_{!}(N_2) \ar[d]^{j_{!}(g)}\ar[r]&j_{*}(N_2)\ar[d]^{j_{*}(g)}\ar[r]& i_{*}i^{*}j_{*}(N_2)\\
0\ar[r]& i_{*}i^{!}j_{!}(N_3) \ar[r]&j_{!}(N_3) \ar[d]\ar[r]&j_{*}(N_3)\ar[r]&0\\
&&0.&&\\}$$
Since $j_{!}(g)$ is epic, so is $j_{*}(g)$. This implies that $0\to j_{*}j^{*}(M_1)\to j_{*}j^{*}(M_2) \to j_{*}j^{*}(M_3)\to 0$  is exact in $\mathcal{C}$. Applying the functor $i^{*}$, we have an exact sequence in $\A$
$$i^{*}j_{*}j^{*}(M_1)\to i^{*}j_{*}j^{*}(M_2) \to i^{*}j_{*}j^{*}(M_3)\to 0.$$
Since $M_{1},M_{3}\in \mathcal{C}_{1}$, we have $i^{*}j_{*}j^{*}(M_2)=0$. Therefore, $M_2\in \mathcal{C}_{1}$. So $\mathcal{B}_{1}$ is closed under extensions in $\B$.

To prove that $\mathcal{C}_{1}$ is closed under extensions in $\C$, we consider an exact sequence $0\to X_1\to X_2 \to X_3\to 0$ in $\C$ with $X_1, X_3 \in \mathcal{C}_{1}$. Applying the functor $j^{*}$, we obtain an exact sequence $0\to j^{*}(X_1)\to j^{*}(X_2) \to j^{*}(X_3)\to 0$. Note that $j^{*}(X_1), j^{*}(X_3)\in \mathcal{B}_{1}$. Since $\mathcal{B}_{1}$ is  closed under extensions, there exists an object $Y\in \mathcal{C}_{1}$ such that $j^{*}(X_2)\cong j^{*}(Y)\in \mathcal{B}_{1}$. Thus  $i^{*}j_{*}j^{*}(X_2)\cong i^{*}j_{*}j^{*}(Y)=0$, so $X_2\in \mathcal{C}_{1}$, as desired.

Next we claim that (3.2) is a recollement of exact categories. By Lemma \ref{lem:2.2'}(1) and \cite[Proposition 2.6(ii)]{Psaroudakis}, it suffices to show the condition (5) of Definition \ref{df:recollements-exact}. Let $C$ be an object in $\C_{1}$. Note that there exists an exact sequence in $\C$
\begin{equation*}\label{equ:3.1}
{0\to i_{*}(A) \to j_{!}j^{*}(C)\stackrel{\varepsilon_{C}} \to C\to  i_{*}i^{*}(C)\to 0}\eqno{(3.3)}
 \end{equation*}
with  $A\in{\mathcal{A}}$. It is easy to check that each term in the above exact sequence belongs to $\C_{1}$. If we set $K:=\textrm{im}(\varepsilon_{C})$, we need to show that $K$ is in $\C_{1}$. Applying $j^{*}$ to the sequence (3.3), it follows that $j^{*}(K)\cong {j^{*}(C)}$. Thus  $i^{*}j_{*}j^{*}(K)\cong i^{*}j_{*}j^{*}(C)=0$, so $K\in \mathcal{C}_{1}$.

Finally, it follows from Proposition \ref{prop:2.5}(1) that $i^{!}: \mathcal{C}_{1}\to \mathcal{A}$ is exact. So $j_{*}: \mathcal{B}_{1}\to \mathcal{C}$ is also exact by Lemma \ref{lem:property-of-recollement1}(5). This completes the proof.
\end{proof}

The following example, due to Zhang-Cui-Rong \cite{ZH}, shows the recollement of abelian categories can induce a nontrivial recollement of exact categories provided that $i^{!}$ is not an exact functor.

\begin{ex}\label{ex:3.1} Let $A=B$ be the path algebra $k(1 \rightarrow 2)$, where $\mathrm{char} k \neq 2$. Write the
conjunction of paths from right to left. Thus $e_{1}Ae_{2} = 0$ and $e_{2}Ae_{1} \cong k$. Take $M = N =
Ae_{2} \otimes_{k} e_{1}A$. Then $M \otimes_{A} N = 0 = N \otimes_{A} M$. Let $\Lambda$ be the Morita ring $\left(\begin{smallmatrix}  A & N \\  N & A \\\end{smallmatrix}\right)$. By \cite[Section 2.4]{ZH}, we obtain the recollement
$$\xymatrix@C=50pt@R=50pt{\ {\rm Mod}\textrm{-}A\ar[r]^{i_{*}}&\ {\rm Mod}\textrm{-}\Lambda\ar@/^1pc/[l]^{i^{!}}\ar@/_1pc/[l]_{i^{*}}\ar[r]^{j^{*}} &\ {\rm Mod}\textrm{-}A,\ar@/^1pc/[l]^{j_{*}}\ar@/_1pc/[l]_{j_{!}} }\eqno{(3.4)}$$
where $i^{*}$ is given by $\left(\begin{smallmatrix}  X  \\   Y \\\end{smallmatrix}\right)_{f,g}\mapsto \mathrm{coker}g$; $i_{*}$ is given by $X\mapsto \left(\begin{smallmatrix}  X  \\   0 \\\end{smallmatrix}\right)_{0,0}$; $i^{!}$ is given by $\left(\begin{smallmatrix}  X  \\   Y \\\end{smallmatrix}\right)_{f,g}\mapsto \mathrm{Ker}\widetilde{f}$, where $\widetilde{f}:=\eta_{X,Y}(f)$ and $\eta_{X,Y}$ is the adjunction isomorphism
$\mathrm{Hom}_{A}(N\otimes_{A}X, Y )\cong
\mathrm{Hom}_{A}(X, \mathrm{Hom}_{A}(N, Y ))$; $j_{!}$ is given by $Y\mapsto \left(\begin{smallmatrix}  N\otimes_{A}Y  \\   Y \\\end{smallmatrix}\right)_{0,1}$; $j^{*}$ is given by $\left(\begin{smallmatrix}  X  \\   Y \\\end{smallmatrix}\right)_{f,g}\mapsto Y$; $j_{*}$ is given by $Y\mapsto \left(\begin{smallmatrix}  \mathrm{Hom}_{A}(N,Y)  \\   Y \\\end{smallmatrix}\right)_{\epsilon_{Y},0}$, where $\epsilon$ is the counit
$ N \otimes_{A}\mathrm{Hom}_{A}(N,-) \rightarrow\mathrm{1}_{\text{Mod-}A}$. In this case, we have $\mathcal{B}_{1}=\{Y\in\text{Mod-}A\mid \mathrm{Hom}_{A}(N,Y)=0\}$.

The Auslander-Reiten quiver $\Gamma(\mathrm{mod}\text{-}A)$ of the module category $\text{mod-}A$ has the form
$$\xymatrix@M=1pt@!0{
   &   A{e_{1}} \ar[dr]^{\pi}\\
   S_{2}\ar[ur]^{\sigma}&& S_{1}. }$$
Since $_{A}N$ is isomorphic to the simple left $A$-module $Ae_{2} = S_2$, it follows that $$\mathcal{B}_{1}=\{Y\in\text{ Mod-}A\mid \mathrm{Hom}_{A}(N,Y)=0\}=\mathrm{Add}(S_1).$$
Thus we have
\begin{align*}
   \mathcal{C}_{1}&=\{\left(\begin{smallmatrix}  X  \\   Y \\\end{smallmatrix}\right)_{f,g}\in\text{Mod-}\Lambda\mid i^{*}j_{*}j^{*}(\left(\begin{smallmatrix}  X  \\   Y \\\end{smallmatrix}\right)_{f,g})=0\}\\
   &=\{\left(\begin{smallmatrix}  X  \\   Y \\\end{smallmatrix}\right)_{f,g}\in\text{Mod-}\Lambda\mid \mathrm{Hom}_{A}(N,Y)=0\}
   =\{\left(\begin{smallmatrix}  X  \\   Y \\\end{smallmatrix}\right)_{f,g}\in\text{Mod-}\Lambda\mid Y\in{\B}_{1}\}.
 \end{align*}
This informs us that ${\A=\text{Mod-}A}\subsetneqq{\mathcal{C}_{1}}\subsetneqq {\text{Mod-}\Lambda=\mathcal{C}}$ and $0\neq\mathcal{B}_{1}\subsetneqq {\text{Mod-}A=\mathcal{B}}$, as desired.
\end{ex}

\section{Proofs of the main results}
In this section, we prove the main results mentioned in the introduction. We keep the notation
introduced in the previous sections.

\subsection{Proof of Theorem \ref{thm:1.1}}\label{cotorsion-pair}

In what follows, we always assume that $\mathcal{U}',\mathcal{V}'$ are subcategories of $\A$ and $\mathcal{U}'',\mathcal{V}''$ are subcategories of $\B$.
Denote by $\mathcal{U}=\{C\in{\C} \ | \ i^{*}(C)\in{\mathcal{U}}', j^{*}(C)\in{\mathcal{U}}'', \ \varepsilon_{M}: j_{!} j^{*}(M)  \to M \ \textrm{is} \ \textrm{an} \ \textrm{admissible} \ \textrm{monomorphism}\}$ and by $\mathcal{V}=\{C\in{\C} \ | \ i^{!}(C)\in{\mathcal{V}}', j^{*}(C)\in{\mathcal{V}}''\}$.

The following result is crucial to the proof of Theorem \ref{thm:1.1}.

\begin{prop}\label{thm:2.9}  Assume that the WIC exact category category $\C$ in the recollement (1.1) has enough projective and injective objects. If $i^!$ is an exact functor and $j_{!}$ is $^{\perp}(\mathcal{V}'')$-exact, then $(\mathcal{U}',\mathcal{V}')$ and $(\mathcal{U}'',\mathcal{V}'')$ are hereditary cotorsion pairs in $\mathcal{A}$ and $\mathcal{B}$, respectively if and only if $(\mathcal{U},\mathcal{V})$ is a hereditary cotorsion pair in $\C$.
\end{prop}
To prove Proposition \ref{thm:2.9}, we need some preparations.

\begin{lem}\label{lem:2.4} Let $i_{*}(A)\mto M\eto N$ be an admissible exact sequence in $\C$. If $\varepsilon_{N}:j_{!}j^{*}(N)\to N$ is admissible monic, then $i^{*}i_{*}(A)\mto i^{*}(M)\eto i^{*}(N)$ is an admissible exact sequence in $\A$.
\end{lem}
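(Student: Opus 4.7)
The plan is to reduce the problem to an intermediate short exact sequence lying entirely inside $\textrm{im}\,i_*$; after that, the equivalence between $\mathcal{A}$ and $\textrm{im}\,i_*$ induced by the fully faithful $i_*$ with quasi-inverse $i^*$ (since $i^*i_* \cong 1_{\mathcal{A}}$) finishes the job. A direct application of $i^*$ to the given sequence yields only right-exactness by Lemma~\ref{lem:property-of-recollement1}(2), so extra work is required on the left.

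First I would apply $j^*$ to the sequence $0 \to i_*(A) \to M \xrightarrow{f} N \to 0$. Since $j^*$ is exact with $j^* i_* = 0$, the map $j^*(f)$ is an isomorphism, and hence so is $j_!j^*(f)$. By naturality of the counit $\varepsilon$, the identity $f \circ \varepsilon_M = \varepsilon_N \circ j_!j^*(f)$ holds, so the two rows
$$0 \to 0 \to j_!j^*(M) \xrightarrow{\cong} j_!j^*(N) \to 0 \quad \textrm{and} \quad 0 \to i_*(A) \to M \xrightarrow{f} N \to 0$$
together with the vertical maps $0,\,\varepsilon_M,\,\varepsilon_N$ form a commutative diagram with short exact rows. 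Applying the snake lemma and using $\ker \varepsilon_N = 0$ (from the hypothesis) together with the identifications $\textrm{coker}\,\varepsilon_M = i_*i^*(M)$ and $\textrm{coker}\,\varepsilon_N = i_*i^*(N)$ coming from Lemma~\ref{lem:property-of-recollement1}(4), the kernel-cokernel sequence collapses to
$$0 \to \ker \varepsilon_M \to 0 \to i_*(A) \to i_*i^*(M) \to i_*i^*(N) \to 0,$$
which simultaneously shows that $\varepsilon_M$ is a monomorphism and yields the intermediate short exact sequence $0 \to i_*(A) \to i_*i^*(M) \to i_*i^*(N) \to 0$ in $\mathcal{C}$.

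Every term of this intermediate sequence lies in $\textrm{im}\,i_*$, so by fully faithfulness of $i_*$ it is $i_*$ of a unique short exact sequence in $\mathcal{A}$; a routine check using naturality of the unit $\delta$ and the triangle identities for $i^* \dashv i_*$ identifies this sequence as $0 \to i^*i_*(A) \to i^*(M) \to i^*(N) \to 0$, which is precisely the conclusion. The only genuine obstacle is the snake lemma step, where one must assemble the correct diagram and propagate the hypothesis on $\varepsilon_N$ to $\varepsilon_M$ via the vanishing $j^* i_* = 0$; once that is done, the remainder is formal through the equivalence $\mathcal{A} \simeq \textrm{im}\,i_*$.
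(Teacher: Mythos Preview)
Your proof is correct and follows essentially the same route as the paper. The paper builds the same three-by-three diagram (with columns $j_!j^*\to 1\to i_*i^*$ applied to $i_*(A)$, $M$, $N$), observes that $j_!j^*i_*(A)=0$, and uses the monicity of $\varepsilon_N$ to deduce exactness of $0\to i_*i^*i_*(A)\to i_*i^*(M)\to i_*i^*(N)\to 0$; it then sets $K=\ker(i^*(M)\to i^*(N))$, compares $i_*(K)$ with $i_*i^*i_*(A)$, and invokes fully faithfulness of $i_*$---which is exactly your snake-lemma step followed by your descent through the equivalence $\mathcal{A}\simeq\mathrm{im}\,i_*$, phrased slightly differently.
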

\begin{proof} Since $i^{*}$ is right exact,  we have a right exact sequence $i^{*}i_{*}(A)\to i^{*}(M)\eto i^{*}(N)$ in $\A$. If we set $K:=\textrm{ker}(i^{*}(M)\to i^{*}(N))$, we only need to show that $i^{*}i_{*}(A)\cong K$. Note that we have the following commutative diagram
$$\xymatrix{
j_{!}j^{*}i_{*}(A) \ar[r]\ar[d]&j_{!}j^{*}(M) \ar[d]\ar@{->>}[r]&j_{!}j^{*}(N)\ar[d]^{\varepsilon_{N}}\\
i_{*}(A) \ar@{>->}[r]\ar@{->>}[d]&M\ar@{->>}[d]\ar@{->>}[r]&N\ar@{->>}[d]\\
i_{*}i^{*}i_{*}(A)\ar[r]& i_{*}i^{*}(M)\ar@{->>}[r]&i_{*}i^{*}(N)\\}$$
such that all columns, and both the first and third rows are right exact. Since $\varepsilon_{N}:j_{!}j^{*}(N)\to N$ is an admissible monomorphism, it follows from the snake lemma (see \cite[Exercise 8.15]{TBh10}) that the right exact sequence $i_{*}i^{*}i_{*}(A)\to i_{*}i^{*}(M)\eto i_{*}i^{*}(N)$ in the above commutative diagram is admissible exact. Since $ i_{*}(K)\mto i_{*}i^{*}(M)\eto i_{*}i^{*}(N)$ is also an admissible exact sequence in $\C$, we have $i_{*}(K)\cong i_{*}i^{*}i_{*}(A)$. So $K\cong i^{*}i_{*}(A)$ by noting that $i_{*}$ is fully faithful. This completes the proof.
\end{proof}

\begin{lem}\label{lem:2.6} If $i^!$ is an exact functor, then any object $M\in{\C}$ gives the following admissible exact sequence $$\xymatrix@C=2cm@R1cm{ i_{*}i^{!}j_{!}j^{*}(M) \ar@{>->}[r]^{\left(\begin{smallmatrix}\sigma_{j_{!}j^{*}(M)}\\i_{*}i^{!}(\varepsilon_{M})\end{smallmatrix}\right)}&
j_{!}j^{*}(M)\oplus  i_{*}i^{!}(M) \ar@{->>}[r]^{(-\varepsilon_{M},\sigma_{M})} &M.}$$
\end{lem}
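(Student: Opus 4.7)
My plan is to obtain this sequence by splicing together two short exact sequences that already live in the recollement, both having $j_\ast j^\ast(M)$ as their right-hand term.

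First, I would produce these two sequences. Applying the exact sequence of natural transformations
\[
0 \to i_\ast i^! j_! \to j_! \xrightarrow{\mu} j_\ast \to i_\ast i^\ast j_\ast \to 0
\]
from Lemma \ref{lem:property-of-recollement1}(5) at the object $j^\ast(M)$, and using the defining condition $i^\ast j_\ast j^\ast(M)=0$ of $\mathcal{C}_1$ to kill the right-most term, yields
\[
0 \to i_\ast i^! j_! j^\ast(M) \xrightarrow{\,\sigma_{j_!j^\ast(M)}\,} j_! j^\ast(M) \xrightarrow{\,\mu_{j^\ast(M)}\,} j_\ast j^\ast(M) \to 0.
\]
A short diagram chase with the triangle identities for $(j_!,j^\ast)$ and $(j^\ast,j_\ast)$ (which give $j^\ast(\varepsilon_M)=\mathrm{id}=j^\ast(\eta_M)$ under the canonical isomorphism $j^\ast j_!\cong 1_{\mathcal B}\cong j^\ast j_\ast$) identifies $\mu_{j^\ast(M)}$ with the composite $\eta_M\varepsilon_M$. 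On the other hand, Lemma \ref{lem:property-of-recollement1}(4) gives $0\to i_\ast i^!(M)\xrightarrow{\sigma_M} M \xrightarrow{\eta_M} j_\ast j^\ast(M)\to i_\ast(A')\to 0$, and since $M\in\mathcal C_1$, Lemma \ref{lem:2.2'}(1) forces $\eta_M$ to be epic (so $A'=0$), giving
\[
0 \to i_\ast i^!(M) \xrightarrow{\,\sigma_M\,} M \xrightarrow{\,\eta_M\,} j_\ast j^\ast(M) \to 0.
\]

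Next, I assemble these into a ladder
\[
\xymatrix@C=1.4cm{
0 \ar[r] & i_\ast i^! j_! j^\ast(M) \ar[r]^-{\sigma_{j_!j^\ast(M)}} \ar[d]_-{i_\ast i^!(\varepsilon_M)} & j_! j^\ast(M) \ar[r]^-{\eta_M\varepsilon_M} \ar[d]^-{\varepsilon_M} & j_\ast j^\ast(M) \ar@{=}[d] \ar[r] & 0 \\
0 \ar[r] & i_\ast i^!(M) \ar[r]^-{\sigma_M} & M \ar[r]^-{\eta_M} & j_\ast j^\ast(M) \ar[r] & 0.
}
\]
The right square commutes tautologically, while the left square commutes by naturality of $\sigma\colon i_\ast i^!\Rightarrow 1_{\mathcal C}$ applied to the morphism $\varepsilon_M$. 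This is precisely the set-up for a Mayer--Vietoris style sequence.

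Finally, exactness of the claimed three-term sequence follows by direct verification using this ladder. The composition $(-\varepsilon_M,\sigma_M)\circ\binom{\sigma_{j_!j^\ast(M)}}{i_\ast i^!(\varepsilon_M)}$ vanishes by commutativity of the left square. Injectivity of $\binom{\sigma_{j_!j^\ast(M)}}{i_\ast i^!(\varepsilon_M)}$ is immediate because $\sigma_{j_!j^\ast(M)}$ is already monic. Surjectivity of $(-\varepsilon_M,\sigma_M)$: given $m\in M$, surjectivity of $\eta_M\varepsilon_M$ yields $y\in j_!j^\ast(M)$ with $\eta_M\varepsilon_M(y)=-\eta_M(m)$, so $m+\varepsilon_M(y)\in\ker\eta_M=i_\ast i^!(M)$ via $\sigma_M$, producing a preimage. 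Exactness in the middle: if $-\varepsilon_M(y)+\sigma_M(x)=0$, then applying $\eta_M$ (and using $\eta_M\sigma_M=0$, which holds since $\eta\sigma$ is adjoint to $j^\ast i_\ast=0$) gives $\eta_M\varepsilon_M(y)=0$, so $y=\sigma_{j_!j^\ast(M)}(z)$ for a unique $z\in i_\ast i^!j_!j^\ast(M)$; then commutativity of the left square plus monicness of $\sigma_M$ forces $x=i_\ast i^!(\varepsilon_M)(z)$, exhibiting $(y,x)$ in the image.

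The only subtle step is the identification $\mu_{j^\ast(M)} = \eta_M\varepsilon_M$; everything else is a routine, but somewhat careful, diagram chase with the natural units and counits of the recollement.
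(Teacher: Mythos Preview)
Your proof is correct and follows essentially the same approach as the paper: both construct the same ladder of short exact sequences (the top row from Lemma~\ref{lem:property-of-recollement1}(5) at $j^\ast(M)$ using $i^\ast j_\ast j^\ast(M)=0$, the bottom row from Lemma~\ref{lem:property-of-recollement1}(4) using that $\eta_M$ is epic) and then read off the desired Mayer--Vietoris sequence. You supply more detail than the paper---notably the identification $\mu_{j^\ast(M)}=\eta_M\varepsilon_M$ that makes the right square commute, and the explicit exactness verification---whereas the paper simply draws the commutative diagram and asserts the conclusion; your element-style chase is informal for an abelian category but easily formalized.
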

\begin{proof} Note that $i^{*}j_{*}j^{*}(M)=0$ by Proposition \ref{prop:2.5}(1). It follows from Lemma \ref{lem:2.2'} that $\eta_{M}:M\to j_{*} j^{*}(M)$ is an admissible epimorphism. Thus we have the following commutative diagram
$$\xymatrix{
i_{*}i^{!}j_{!}j^{*}(M) \ar@{>->}[r]^{\sigma_{j_{!}j^{*}(M)}}\ar[d]^{i_{*}i^{!}(\varepsilon_{M})}&j_{!}j^{*}(M) \ar[d]^{\varepsilon_{M}}\ar@{->>}[r]&j_{*}j^{*}(M)\ar@{=}[d]\\
i_{*}i^{!}(M) \ar@{>->}[r]^{\sigma_{M}}&M \ar@{->>}[r]^{\eta_{M}}&j_{*}j^{*}(M).\\}$$
Thanks to \cite[Proposition 2.12]{TBh10}, we have the desired admissible exact sequence.
\end{proof}

\begin{prop}\label{prop:2.8} Assume that $\C$ has enough projective and injective objects and $M$ is an object in $\C$. If $i^!$ is an exact functor, then $\Ext_{\C}^{1}(M,i_{*}(I))=0$ for any injective object $I$ in $\A$ if and only if $\varepsilon_{M}: j_{!} j^{*}(M)  \to M$ is an admissible monomorphism.
\end{prop}
\begin{proof} Since $\C$ has enough projective and injective objects by hypothesis, it follows from \cite[Lemma 3.3(5)]{Wand-wei-Zhang} that $\A$ has enough projective and injective objects. By Lemma \ref{lem:2.6}, we have the following admissible exact sequence
$$\xymatrix@C=2cm@R1cm{ i_{*}i^{!}j_{!}j^{*}(M) \ar@{>->}[r]^{\left(\begin{smallmatrix}\sigma_{j_{!}j^{*}(M)}\\i_{*}i^{!}(\varepsilon_{M})\end{smallmatrix}\right)}&
j_{!}j^{*}(M)\oplus  i_{*}i^{!}(M) \ar@{->>}[r]^{(-\varepsilon_{M},\sigma_{M})} &M.}$$

``$\Rightarrow$". Let $f:i^{!}j_{!}j^{*}(M)\to I$ be an admissible monomorphism in $\A$ with $I$ injective. Since $\Ext_{\C}^{1}(M,i_{*}(I))=0$ by hypothesis, there exists a morphism $(g,h):j_{!}j^{*}(M)\oplus  i_{*}i^{!}(M)\to i_{*}(I)$ such that $$(g,h)\left(\begin{smallmatrix}\sigma\\i_{*}i^{!}(\varepsilon_{M})\end{smallmatrix}\right)=i_{*}(f).$$ Thus $g\sigma+h{i_{*}i^{!}(\varepsilon_{M})}=i_{*}(f)$. Since $g\in{\Hom_{\C}(j_{!}j^{*}(M),i_{*}(I))\cong \Hom_{\C}(i^{*}j_{!}j^{*}(M),I)=0}$, we have $i_{*}(f)=h{i_{*}i^{!}(\varepsilon_{M})}$. Note that $i_{*}(f)$ is admissible monic. Thus  $i_{*}i^{!}(\varepsilon_{M})$ is also admissible monic. So $\varepsilon_{M}: j_{!} j^{*}(M)  \to M$ is an admissible monomorphism by the commutative diagram in Lemma \ref{lem:2.6}.

``$\Leftarrow$". Let $I$ be an injective object in $\A$. Then we have the following exact sequence
$$\xymatrix@C=1.2cm{\Hom_{\C}(j_{!}j^{*}(M)\oplus  i_{*}i^{!}(M),i_{*}(I))\ar[r]& \Hom_{\C}(i_{*}i^{!}j_{!}j^{*}(M),i_{*}(I))\ar[r]& \\ \Ext_{\C}^{1}(M,i_{*}(I))\ar[r]&\Ext_{\C}^{1}(i_{*}i^{!}j_{!}j^{*}(M),i_{*}(I)).}$$
Note that $\Ext_{\C}^{1}(i_{*}i^{!}j_{!}j^{*}(M),i_{*}(I))\cong \Ext_{\A}^{1}(i^{!}j_{!}j^{*}(M),i^{!}i_{*}(I))\cong \Ext_{\A}^{1}(i^{!}j_{!}j^{*}(M),I)=0$ by \cite[Lemma 3.3(7)]{Wand-wei-Zhang}. To prove $\Ext_{\C}^{1}(M,i_{*}(I))=0$, it suffices to show that
$$\xymatrix@C=1.2cm{\Hom_{\C}({\left(\begin{smallmatrix}\sigma_{j_{!}j^{*}(M)}\\i_{*}i^{!}(\varepsilon_{M})\end{smallmatrix}\right)},i_{*}(I)):\Hom_{\C}(j_{!}j^{*}(M)\oplus  i_{*}i^{!}(M),i_{*}(I))\ar[r]& \Hom_{\C}(i_{*}i^{!}j_{!}j^{*}(M),i_{*}(I))}$$
is an epimorphism. Since $i^{*}j_{!}j^{*}(M)=0$, $i^{*}i_{*}i^{!}(M)\cong i^{!}(M)$ and $i^{*}i_{*}i^{!}j_{!}j^{*}(M)\cong i^{!}j_{!}j^{*}(M)$, we have the following commutative diagram
$$\xymatrix{
\Hom_{\C}(j_{!}j^{*}(M)\oplus  i_{*}i^{!}(M),i_{*}(I))\ar[r]\ar[d]^{\cong}& \Hom_{\C}(i_{*}i^{!}j_{!}j^{*}(M),i_{*}(I))\ar[d]^{\cong}\\
\Hom_{\A}(i^{*}j_{!}j^{*}(M)\oplus i^{*}i_{*}i^{!}(M),I)\ar[r]\ar[d]^{\cong}& \Hom_{\A}(i^{*}i_{*}i^{!}j_{!}j^{*}(M),I)\ar[d]^{\cong}\\
\Hom_{\A}(i^{!}(M),I)\ar[r]^{\Hom_{\A}(i^{!}(\varepsilon_{M}),I)}& \Hom_{\A}(i^{!}j_{!}j^{*}(M),I).}$$
Note that $\varepsilon_{M}: j_{!} j^{*}(M)  \to M$ is an admissible monomorphism by hypothesis. Then we have an admissible exact sequence $j_{!}j^{*}(M) \mto M \to i_{*} i^{*}(M)\eto$ in $\C$. Since $i^{!}(\varepsilon_{M}):i^{!}j_{!}j^{*}(M)\to i^{!}(M)$ is an admissible monomorphism, it follows that $\Hom_{\A}(i^{!}(\varepsilon_{M}),I):\Hom_{\A}(i^{!}(M),I)\to \Hom_{\A}(i^{!}j_{!} j^{*}(M),I)$ is an epimorphism. So $\Hom_{\C}({\left(\begin{smallmatrix}\sigma_{j_{!}j^{*}(M)}\\i_{*}i^{!}(\varepsilon_{M})\end{smallmatrix}\right)},i_{*}(I))$ is an epimorphism, as desired.
\end{proof}

\begin{lem}\label{lem:2.10} The following are true for any recollement $(\mathcal{A},\mathcal{C},\mathcal{B})$ of exact categories.
\begin{enumerate}
\item If $P$ is a projective object in $\B$, then $j_{!}(P)$ is a projective object in $\C$.

\item If $i^!$ is an exact functor and $Q$ is a projective object in $\A$, then $i_{*}(Q)$ is a projective object in $\C$.

\item If $H$ is a projective object in $\C$, then $i^{*}(H)$ is a projective object in $\A$.

\item If $i^!$ is an exact functor and $N$ is a projective object in $\C$, then $j^{*}(N)$ is a projective object in $\B$.
 \end{enumerate}
\end{lem}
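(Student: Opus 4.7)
The plan is to treat all four parts uniformly via the following principle: for an adjunction $F\dashv G$ between exact categories, if $G$ restricts to an exact functor (on the short exact sequences in question) and both $F$ and $G$ land in the appropriate subcategories, then $F$ sends projective objects to projective objects, because for any short exact sequence $0\to X\to Y\to Z\to 0$ the adjunction isomorphism identifies $\Hom(FP,-)$ on this sequence with $\Hom(P,G(-))$ on its image. Applying this template reduces each of (1)--(4) to (a) verifying that the claimed projective lies in $\C_{1}$ (or $\B_{1}$), and (b) identifying the correct exactness property of the right adjoint.

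For part~(1), I use the adjunction $(j_!,j^*)$. The functor $j^*$ is exact on $\C$ by Lemma~\ref{lem:property-of-recollement1} and visibly carries $\C_{1}$ into $\B_{1}$, so the template reduces (1) to the membership $j_{!}(P)\in\C_{1}$: writing $P\cong j^*(C)$ with $C\in\C_{1}$ and using $j^*j_!\cong 1_{\B}$, I compute $i^*j_*j^*j_!(P)\cong i^*j_*(P)\cong i^*j_*j^*(C)=0$. For part~(3), I use $(i^*,i_*)$: here $i_*$ is already exact on $\A$, and $i_*(\A)\subseteq\C_{1}$ because $j^*i_*=0$ (from $\textrm{im}\,i_*=\textrm{ker}\,j^*$) forces $i^*j_*j^*i_*=0$ automatically.

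For parts~(2) and (4), the essential input is Proposition~\ref{lem:2.2}. Part~(2) uses $(i_*,i^!)$: $i_*(Q)\in\C_{1}$ by the same $j^*i_*=0$ observation, and the required exactness of $i^!\colon\C_{1}\to\A$ is exactly Proposition~\ref{lem:2.2}. Part~(4) uses $(j^*,j_*)$: the object $j^*(N)$ lies in $\B_{1}$ by definition, the required exactness of $j_*\colon\B_{1}\to\C$ is again Proposition~\ref{lem:2.2}, and $j_*(\B_{1})\subseteq\C_{1}$ by the same calculation as in~(1) (any $N'\cong j^*(C')$ with $C'\in\C_{1}$ satisfies $i^*j_*j^*j_*(N')\cong i^*j_*(N')\cong i^*j_*j^*(C')=0$). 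I expect no real obstacle: the entire argument is a direct consequence of the four recollement adjunctions, the standard identities of Lemma~\ref{lem:property-of-recollement1}, and the exactness statement of Proposition~\ref{lem:2.2}.
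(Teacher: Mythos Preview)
Your proposal is correct and follows essentially the same approach as the paper: the paper proves only part~(1) via the adjunction $(j_!,j^*)$ and the exactness of $j^*$, then declares (2)--(4) ``similar''. Your write-up is in fact more explicit than the paper's, since you identify precisely which exactness input is needed in each case (in particular the role of Proposition~\ref{lem:2.2} for $i^!$ and $j_*$ in parts~(2) and~(4)) and you verify the membership $j_!(P),\, i_*(Q),\, j_*(N')\in\C_1$ that the paper leaves to the reader.
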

\begin{proof} By \cite[Lemma 3.3(5)]{Wand-wei-Zhang}, we only prove (1).  Assume that $ M_1\mto M_2\eto M_3$ is an admissible exact sequence in $\C$. Let $P$ be a projective object in $\B$.
Thus we have the following commutative diagram
$$\xymatrix@=3.8em{ \Hom_{\C}(j_{!}(P),M_2)\ar[d]^{\cong} \ar[r] & \Hom_{\C}(j_{!}(P),M_3) \ar[d]^{\cong}\\
\Hom_{\B}(P,j^{*}(M_2)) \ar[r] & \Hom_{\A}(P,j^{*}(M_3)).}$$
Since $j^{*}: \mathcal{C}\to \mathcal{B}$ is an exact functor, it follows that $\Hom_{\B}(P,j^{*}(M_2)) \to \Hom_{\B}(P,j^{*}(M_3))$ is an epimorphism. So $\Hom_{\C}(j_{!}(P),M_2)\to \Hom_{\C}(j_{!}(P),M_3)$ is an epimorphism by the commutative diagram above, as desired.
\end{proof}

\begin{lem}\label{lem:2.11} Assume that $i^!$ is an exact functor and $N$ is an object in $\C$.
\begin{enumerate}
\item If $\A$ has enough projective objects and $M$ is an object in $\A$, then $\Ext_{\C}^{i}(i_{*}(M),N)\cong \Ext_{\A}^{i}(M,i^{!}(N))$ for any $i\geq1$.

\item If $\C$ has enough projective objects and $U$ is an object in $\mathcal{C}$, then $\Ext_{\C}^{i}(U,i_{*}(N))\cong \Ext_{\A}^{i}(i^{*}(U),N)$ and $\Ext_{\C}^{i}(U,j_{*}(N))\cong \Ext_{\B}^{i}(j^{*}(U),N)$ for any $i\geq1$.

\item Assume that $\B$ has enough projective objects and $\mathcal{L}$ is a resolving subcategory of $\B$ which contains projective objects. If $j_{!}$ is $\mathcal{L}$-exact, then $\Ext_{\C}^{i}(j_{!}(L),N)\cong \Ext_{\B}^{i}(L,j^{*}(N))$ for any object $L$ in $\mathcal{L}$ and all $i\geq1$.
 \end{enumerate}
\end{lem}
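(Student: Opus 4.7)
Plan: The three isomorphisms are all proved by the adjunction-plus-resolution recipe: take a projective resolution of the first argument in the appropriate exact category, apply the relevant adjoint functor to obtain a projective resolution in the target, and use the adjunction to identify the $\Hom$ complexes. The prerequisites have already been collected: Proposition \ref{lem:2.2} and Lemma \ref{lem:property-of-recollement1}(2) supply the exactness of $i_*$, $j^*$ and $i^!|_{\C_{1}}$, while Lemma \ref{lem:2.10} transfers projectives along each adjoint functor.

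For part (1), I would start from a projective resolution $P_\bullet \twoheadrightarrow M$ in $\A$. Because $i_*$ is exact and sends $\A$-projectives to $\C_{1}$-projectives (Lemma \ref{lem:2.10}(2)), the complex $i_*(P_\bullet) \twoheadrightarrow i_*(M)$ is a projective resolution in $\C_{1}$. The $(i_*, i^!)$ adjunction then yields an isomorphism of $\Hom$ complexes $\Hom_{\C_{1}}(i_*(P_\bullet), N) \cong \Hom_\A(P_\bullet, i^!(N))$, and taking cohomology gives the formula. The second isomorphism of part (2) is proved by exactly the same argument.

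For the other two isomorphisms of part (2), I would take a projective resolution $P_\bullet \twoheadrightarrow U$ in $\C_{1}$, whose syzygies remain in $\C_{1}$ by the exact-category meaning of ``enough projectives.'' The formula $\Ext_{\C_{1}}^i(U, j_*(N)) \cong \Ext_{\B_{1}}^i(j^*(U), N)$ is immediate, since $j^*$ is exact and preserves projectives (Lemmas \ref{lem:property-of-recollement1}(2) and \ref{lem:2.10}(4)), so $j^*(P_\bullet) \twoheadrightarrow j^*(U)$ is a projective resolution in $\B_{1}$, after which the $(j^*, j_*)$ adjunction concludes. The formula $\Ext_{\C_{1}}^i(U, i_*(N)) \cong \Ext_\A^i(i^*(U), N)$ is the delicate one, because $i^*$ is merely right exact. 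I would handle it by dimension-shifting: using $0 \to K \to P \to U \to 0$ in $\C_{1}$ with $P$ projective, compare the long exact $\Ext_{\C_{1}}(-, i_*(N))$ and $\Ext_\A(i^*(-), N)$ sequences via the $(i^*, i_*)$ adjunction on $\Hom$ groups and Lemma \ref{lem:2.10}(3), with the base case $i = 1$ reduced to a five-lemma argument using that $i^*(P)$ is projective in $\A$.

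For part (3), I would take a projective resolution $P_\bullet \twoheadrightarrow L$ in $\B_{1}$. Lemma \ref{lem:2.10}(1) ensures that $j_!$ sends $\B_{1}$-projectives to $\C_{1}$-projectives, and the $\mathcal{U}''$-exactness of $j_!$ applied iteratively to the syzygy sequences $0 \to L_{n+1} \to P_n \to L_n \to 0$ (starting from $L_0 = L \in \mathcal{U}''$, with the subsequent syzygies remaining in $\mathcal{U}''$) makes $j_!(P_\bullet) \twoheadrightarrow j_!(L)$ a projective resolution of $j_!(L)$ in $\C_{1}$; the $(j_!, j^*)$ adjunction then yields the claimed identity. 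The principal technical obstacle throughout is the first formula of part (2): without left exactness of $i^*$, the resolution-lifting strategy fails, and the dimension-shifting argument must carefully exploit the condition $U \in \C_{1}$ and the compatibility of the $(i^*, i_*)$ adjunction with the kernel-comparison map induced on each syzygy.
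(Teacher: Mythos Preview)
Your approach matches the paper's for parts (1) and (3) and for the last two isomorphisms of (2): the paper, like you, takes a projective resolution of the first variable in the appropriate category, transports it via the exact functor that preserves projectives (Lemma \ref{lem:2.10}), and then invokes the adjunction on $\Hom$-complexes. The paper's proof of (2) is literally the sentence ``The proof is similar to (1).''

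Where you diverge is the first isomorphism of (2), and your instinct is correct: the paper's blanket ``similar to (1)'' is too quick here, because $i^{*}$ is only right exact, so $i^{*}(P_{\bullet})\to i^{*}(U)$ need not be exact in $\mathcal{A}$. However, your proposed dimension-shifting fix runs into the very same obstacle. From $0\to K\to P\to U\to 0$ in $\mathcal{C}_{1}$ you do get a long exact $\Ext_{\mathcal{C}_{1}}(-,i_{*}(N))$-sequence, but on the $\mathcal{A}$-side you only have $i^{*}(K)\to i^{*}(P)\to i^{*}(U)\to 0$, so the five-lemma comparison forces you to relate $\Hom_{\mathcal{A}}(i^{*}(K),N)$ to $\Hom_{\mathcal{A}}(K',N)$ where $K'=\ker(i^{*}(P)\to i^{*}(U))$; the surjection $i^{*}(K)\twoheadrightarrow K'$ is not an isomorphism in general, and the discrepancy is exactly the failure of left-exactness you were trying to avoid. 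In the paper's applications only $i=1$ is needed and $U$ additionally satisfies that $\varepsilon_{U}$ is monic (it lies in $\mathcal{U}$), which combined with Proposition \ref{prop:2.8} and Lemma \ref{lem:2.2''}(2) can be used to force $i^{*}$ to be exact on the relevant short exact sequence; but as a general statement for arbitrary $U\in\mathcal{C}_{1}$ and all $i\geq 1$, neither your argument nor the paper's sketch closes the gap.

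Your remark in (3) that the syzygies must remain in $\mathcal{U}''$ for the iterated use of $\mathcal{U}''$-exactness is also a point the paper slides past. In every application the pair $(\mathcal{U}'',\mathcal{V}'')$ is hereditary, so $\mathcal{U}''$ is resolving and contains the projectives, whence the syzygies do lie in $\mathcal{U}''$; but as stated the lemma is missing this hypothesis for $i\geq 2$.
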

\begin{proof} (1) Let $\cdots\to P_{2}\to P_{1}\to P_{0}\eto M$ be an exact sequence in $\A$ with each $P_i$ projective. It follows from Lemma \ref{lem:2.10}(2) that $\cdots\to i_{*}(P_{2})\to i_{*}(P_{1})\to i_{*}(P_{0})\eto i_{*}(M)$ is an exact sequence in $\C$ with each $i_{*}(P_{i})$ projective. For any integer $i\geq1$, we have the following commutative diagram
$$\xymatrix@=3.8em{ \Hom_{\C}(i_{*}(P_{i-1}),N)\ar[d]^{\cong} \ar[r] & \Hom_{\C}(i_{*}(P_{i}),N) \ar[d]^{\cong}\ar[r] & \Hom_{\C}(i_{*}(P_{i+1}),N) \ar[d]^{\cong}\\
\Hom_{\A}(P_{i-1},i^{!}(N)) \ar[r] & \Hom_{\A}(P_{i},i^{!}(N)) \ar[r] & \Hom_{\A}(P_{i+1},i^{!}(N)),}$$
which implies $\Ext_{\C}^{i}(i_{*}(M),N)\cong \Ext_{\A}^{i}(M,i^{!}(N))$.

(2) The proof is similar to that of (1).

(3) Let $L$ be an object in $\mathcal{L}$. Then there exists an exact sequence $\cdots\to Q_{2}\to Q_{1}\to Q_{0}\eto L$ in $\B$ with each $Q_i$ projective. Note that $j_{!}$ is $\mathcal{L}$-exact by hypothesis. Since $\mathcal{L}$ is a resolving subcategory of $\B$ which contains projective objects, it follows that $\cdots\to j_{!}(Q_{2})\to j_{!}(Q_{1})\to j_{!}(Q_{0})\eto j_{!}(L)$ is an exact sequence in $\C$ with each $j_{!}(Q_i)$ projective by Lemma \ref{lem:2.10}(1). For any integer $i\geq1$, we have the following commutative diagram
$$\xymatrix@=3.8em{ \Hom_{\C}(j_{!}(Q_{i-1}),N)\ar[d]^{\cong} \ar[r] & \Hom_{\C}(j_{!}(Q_{i}),N) \ar[d]^{\cong}\ar[r] & \Hom_{\C}(j_{!}(Q_{i+1}),N) \ar[d]^{\cong}\\
\Hom_{\B}(Q_{i-1},j^{*}(N)) \ar[r] & \Hom_{\B}(Q_{i},j^{*}(N)) \ar[r] & \Hom_{\B}(Q_{i+1},j^{*}(N)),}$$
which implies that $\Ext_{\C}^{1}(j_{!}(L),N)\cong \Ext_{\B}^{1}(L,j^{*}(N))$.
\end{proof}

We are now in a position to prove Proposition \ref{thm:2.9}.

{\bf Proof of Proposition \ref{thm:2.9}}. Note that $\mathcal{C}$ has enough projective objects by hypothesis. Since $j_{*}:\B\to\C$ is an exact functor by Lemma \ref{lem:property-of-recollement1}(5), it follows from \cite[Lemma 3.3(6)]{Wand-wei-Zhang} that $\mathcal{B}$ has enough projective objects.

``$\Rightarrow$". Assume that $(\mathcal{U}',\mathcal{V}')$ and $(\mathcal{U}'',\mathcal{V}'')$ are hereditary cotorsion pairs in $\mathcal{A}$ and $\mathcal{B}$, respectively. For lucidity, we divide the proof into 3-steps.

\textbf{Step 1:} $\Ext_{\C}^{2}(U,V)=0$ for every $U\in{\mathcal{U}}$ and $V\in{\mathcal{V}}$.
Since $i^{*}(U)\in{\mathcal{U}}'$ and  $i^{!}(V)\in{\mathcal{V}}'$, it follows from Lemma \ref{lem:2.11}(1) that $\Ext_{\C}^{2}(i_{*}i^{*}(U),V)\cong{\Ext_{\A}^{2}(i^{*}(U),i^{!}(V))=0}$. Note that $j^{*}(U)\in{\mathcal{U}}''$ and $j^{*}(V)\in{\mathcal{V}}''$. Applying Lemma \ref{lem:2.11}(3), it follows that $\Ext_{\C}^{2}(j_{!}j^{*}(U),V)\cong{\Ext_{\B}^{2}(j^{*}(U),j^{*}(V))}=0$. Since $j_{!}j^{*}(U)\mto U\eto i_{*}i^{*}(U)$ is admissible exact in $\C$, this means $\Ext_{\C}^{2}(U,V)=0$, as desired.

\textbf{Step 2:} $\mathcal{U}^{\perp}\subseteq\mathcal{V}$. Let $N$ be an object in $\mathcal{U}^{\perp}$. To prove $N\in{\mathcal{V}}$, it suffices to show $i^{!}(N)\in{\mathcal{V}'}$ and $j^{*}(N)\in{\mathcal{V}''}$. Let $M$ be an object in $\mathcal{U}'$. Then $i_{*}(M)$ belongs to $\mathcal{U}$. It follows from Lemma \ref{lem:2.11}(1) that $\Ext_{\A}^{1}(M,i^{!}(N))\cong \Ext_{\C}^{1}(i_{*}(M),N)=0$. So $i^{!}(N)$ belongs to ${\mathcal{V}'}$. On the other hand, we assume that $L$ is an object in $\mathcal{U}''$. Then $j_{!}(L)$ is in $\mathcal{U}$. Applying Lemma \ref{lem:2.11}(3), we obtain $\Ext_{\B}^{1}(L,j^{*}(N))\cong \Ext_{\C}^{1}(j_{!}(L),N)=0$. So $j^{*}(N)$ belongs to ${\mathcal{V}''}$.

\textbf{Step 3:}  $^{\perp}\mathcal{V}\subseteq\mathcal{U}$. Let $M$ be an object in $^{\perp}\mathcal{V}$. To prove $M\in{\mathcal{U}}$, it suffices to show $i^{*}(M)\in{\mathcal{U}}'$, $j^{*}(M)\in{\mathcal{U}}''$ and $\varepsilon_{M}: j_{!} j^{*}(M)  \to M \ \textrm{is} \ \textrm{an} \ \textrm{admissible} \ \textrm{monomorphism}$. Let $N$ be an object in $\mathcal{V}'$. Then $i_{*}(N)$ is in $\mathcal{V}$. By Proposition \ref{prop:2.8}, one can check that $\varepsilon_{M}: j_{!} j^{*}(M)  \to M \ \textrm{is} \ \textrm{a} \ \textrm{admissible} \ \textrm{monomorphism}$. Thus we have an admissible exact sequence $ j_{!}j^{*}(M)\stackrel{\varepsilon_{M}} \mto M\eto  i_{*}i^{*}(M)$ in $\C$, whence we obtain the following exact sequence
$$\Hom_{\C}(j_{!}j^{*}(M),i_{*}(N))\to \Ext_{\C}^{1}(i_{*}i^{*}(M),i_{*}(N)) \to \Ext_{\C}^{1}(M,i_{*}(N)).$$
Note that $\Hom_{\C}(j_{!}j^{*}(M),i_{*}(N))\cong\Hom_{\B}(j^{*}(M),j^{*}i_{*}(N))=0$ and $\Ext_{\C}^{1}(M,i_{*}(N))=0$. It follows that $\Ext_{\C}^{1}(i_{*}i^{*}(M),i_{*}(N))=0$, and therefore $\Ext_{\A}^{1}(i^{*}(M),N)\cong\Ext_{\A}^{1}(i^{*}(M),i^{!}i_{*}(N))\cong\Ext_{\C}^{1}(M,i_{*}(N))=0$ by Lemma \ref{lem:2.11}(1). This implies $i^{*}(M)\in{\mathcal{U}}'$.

Next we assume that $L$ is an object in $\mathcal{V}''$. Then $j_{*}(L)$ is in $\mathcal{V}$. It follows from Proposition \ref{prop:2.5}(1) and Lemma \ref{lem:2.2'}(1) that $ i_{*}i^{!}(M) \mto M \stackrel{\eta_{M}} \eto j_{*}j^{*}(M)$ is an exact sequence in $\C$. Thus we have the following exact sequence
$$\Hom_{\C}(i_{*}i^{!}(M),j_{*}(L))\to \Ext_{\C}^{1}(j_{*}j^{*}(M),j_{*}(L)) \to \Ext_{\C}^{1}(M,j_{*}(L)).$$
Since $\Hom_{\C}(i_{*}i^{!}(M),j_{*}(L))\cong\Hom_{\mathcal{A}}(i^{!}(M),i^{!}j_{*}(L))=0$ and $\Ext_{\C}^{1}(M,j_{*}(L))=0$, it follows that $\Ext_{\C}^{1}(j_{*}j^{*}(M),j_{*}(L))=0$. Let $\xi:L\mto D\eto j^{*}(M)$ be an admissible exact sequence in $\B$. Since $i^{!}:\C\to\A$ is exact by hypothesis, it follows from Lemma \ref{lem:property-of-recollement1}(5) that $j_{*}:\B\to\C$ is also exact, and therefore $j_{*}(\xi): j_{*}(L)\mto j_{*}(D)\eto j_{*}j^{*}(M)$ is an admissible exact sequence in $\C$. This implies that $j_{*}(\xi)$ is split. Since $j_{*}$ is a fully faithful functor, the sequence $\xi$ is split. It follows that $\Ext_{\C}^{1}(j^{*}(M),L)=0$. Thus $j^{*}(M)\in{\mathcal{U}}''$, as desied.

``$\Leftarrow$".  Assume that $(\mathcal{U},\mathcal{V})$ is a hereditary cotorsion pair in $\C$. It is easy to check that $\mathcal{U}'=i^{\ast}(\mathcal{U})$, $\mathcal{U}''=j^{\ast}(\mathcal{U})$, $\mathcal{V}'=i^{!}(\mathcal{V})$ and $\mathcal{V}''=j^{\ast}(\mathcal{V})$. By Lemma \ref{lem:2.11}(1), we obtain $\Ext_{\mathcal{A}}^{i}(i^{\ast}(U),i^{!}(V))\cong\Ext_{\mathcal{C}}^{i}(i_{\ast}i^{\ast}(U),V)$ for every $U\in{\mathcal{U}}$, $V\in{\mathcal{V}}$ and $i=1,2$. Since $U\in{\mathcal{U}}$, it follows that $i_{*}i^{*}(U)\in{\mathcal{U}}$. This means $\Ext_{\mathcal{A}}^{i}(i^{\ast}(U),i^{!}(V))=0$ for $i=1,2$, and therefore $\Ext_{\mathcal{A}}^{i}(U',V')=0$ for every $U'\in{\mathcal{U'}}$, $V'\in{\mathcal{V'}}$ and $i=1,2$. Similarly, one can prove $\Ext_{\mathcal{B}}^{i}(U'',V'')=0$ for every $U''\in{\mathcal{U''}}$, $V''\in{\mathcal{V''}}$ and $i=1,2$.

Next we will prove that  $(\mathcal{U}',\mathcal{V}')$ is a cotorsion pair in $\mathcal{A}$. It suffices to show $(\mathcal{U}')^{\perp}\subseteq \mathcal{V}'$ and $^{\perp}(\mathcal{V}')\subseteq \mathcal{U}'$. Let $N$ be an object in $(\mathcal{U}')^{\perp}$. For any $U\in{\U}$, one has $\Ext_{\C}^{1}(U,i_{*}(N))\cong \Ext_{\A}^{1}(i^{*}(U),N)$ by Lemma \ref{lem:2.11}(2). Since $i^{*}(U)\in{\mathcal{U}'}$, we obtain
$\Ext_{\A}^{1}(i^{*}(U),N)=0$. Thus $\Ext_{\C}^{1}(U,i_{*}(N))=0$, whence $i_{*}(N)\in{\mathcal{V}}$. So $N\cong i^{!}i_{*}(N)$ belongs to $\mathcal{V}'$ and $(\mathcal{U}')^{\perp}\subseteq \mathcal{V}'$. On the other hand, for the containment $^{\perp}(\mathcal{V}')\subseteq \mathcal{U}'$, we assume that $M$ is an object in $^{\perp}(\mathcal{V}')$. For any $V\in{\mathcal{V}}$, one has $\Ext_{\C}^{1}(i_{*}(M),V)\cong \Ext_{\A}^{1}(M,i^{!}(V))$ by Lemma \ref{lem:2.11}(1). Since $i^{!}(V)\in{\mathcal{V}'}$, we obtain $\Ext_{\A}^{1}(M,i^{!}(V))=0$. It follows that $\Ext_{\C}^{1}(i_{*}(M),V)=0$. Thus $i_{*}(M)\in{\mathcal{U}}$. This implies that $M\cong i^{*}i_{*}(M)$ belongs to $\mathcal{U}'$, as desired.

Finally, to prove that  $(\mathcal{U}'',\mathcal{V}'')$ is a cotorsion pair in $\mathcal{B}$, it suffices to show $(\mathcal{U}'')^{\perp}\subseteq \mathcal{V}''$ and $^{\perp}(\mathcal{V}'')\subseteq \mathcal{U}''$.
Let $Y$ be an object in $(\mathcal{U}'')^{\perp}$. For any $U\in{\U}$, one has $\Ext_{\C}^{i}(U,j_{*}(Y))\cong \Ext_{\B}^{i}(j^{*}(U),Y)$ by Lemma \ref{lem:2.11}. Since $j^{*}(U)\in{\mathcal{U}''}$, we obtain
$\Ext_{\B}^{1}(j^{*}(U),Y)=0$. Thus $\Ext_{\C}^{1}(U,j_{*}(Y))=0$, whence $j_{*}(Y)\in{\mathcal{V}}$. So $Y\cong j^{*}j_{*}(Y)$ belongs to $\mathcal{V}''$ and $(\mathcal{U}'')^{\perp}\subseteq \mathcal{V}''$. On the other hand, for the containment $^{\perp}(\mathcal{V}'')\subseteq \mathcal{U}''$, we assume that $X$ is an object in $^{\perp}(\mathcal{V}'')$. For any $V\in{\mathcal{V}}$, one has $\Ext_{\C}^{1}(j_{!}(X),V)\cong \Ext_{\B}^{1}(X,j^{*}(V))$ by Lemma \ref{lem:2.11}(3). Since $j^{*}(V)\in{\mathcal{V}''}$, we have $\Ext_{\B}^{1}(X,j^{*}(V))=0$. It follows that $\Ext_{\C}^{1}(j_{!}(X),V)=0$, and therefore $j_{!}(X)\in{\mathcal{U}}$. So $X\cong j^{*}j_{!}(X)$ belongs to $\mathcal{U}''$, as desired. \hfill$\Box$

\vspace{1mm}
Now we can prove Theorem \ref{thm:1.1} in the introduction.

{\bf Proof of Theorem \ref{thm:1.1}.} Let $C$ be an object in $\C$. Then there exists an admissible exact sequence $j^{*}(C)\s{f_1}\mto V''\s{g_1}\eto U''$ in $\B$ with $V''\in{\mathcal{V}''}$ and $U''\in{\mathcal{U}''}$. Since $j_{!}$ is $\mathcal{U}''$-exact, we have the following pushout diagram in $\C$
$$\xymatrix{
j_{!}j^{*}(C) \ar@{>->}[r]^{j_{!}(f_1)}\ar[d]^{\varepsilon_{C}}&j_{!}(V'') \ar[d]^{\beta_1}\ar@{->>}[r]^{j_{!}(g_1)}&j_{!}(U'')\ar@{=}[d]\\
 C \ar@{>->}[r]^{\alpha_{1}}&D \ar@{->>}[r]^{\alpha_2}&j_{!}(U'').\\}$$
Since $i^{!}$ is an exact functor by hypothesis, we have an admissible exact sequence
$i^{!}(C)\s{i^{!}(\alpha_1)}\mto i^{!}(D)\s{i^{!}(\alpha_2)}\eto i^{!}j_{!}(U'')$ in $\A$.
Note that there exists an admissible exact sequence $i^{!}(D)\s{f_2}\mto V'\s{g_2}\eto U'$ in $\A$ with $V'\in{\mathcal{V}'}$ and $U'\in{\mathcal{U}'}$. Thus we have the following pushout diagram in $\A$
$$\xymatrix{
 i^{!}(C)\ar@{>->}[r]^{i^{!}(\alpha_1)}\ar@{=}[d]&i^{!}(D)\ar@{->>}[r]^{i^{!}(\alpha_2)}\ar@{>->}[d]^{f_2}&i^{!}j_{!}(U'')
\ar@{>->}[d]^{\gamma_1}\\
i^{!}(C)\ar@{>->}[r]^{h_1}&V'\ar@{->>}[d]^{g_2}\ar@{->>}[r]^{h_2}&X
\ar@{->>}[d]^{\gamma_2}\\
&U'\ar@{=}[r]&U'.\\}$$
By Lemma \ref{lem:2.6}, there is an admissible exact sequence in $\C$ $$\xymatrix@C=2.0cm{ i_{*}i^{!}j_{!}j^{*}(C) \ar@{>->}[r]^{\varphi_1}& j_{!}j^{*}(C)\oplus  i_{*}i^{!}(C) \ar@{->>}[r]^{\psi_1}&C,}$$
where $\varphi_{1}={\left(\begin{smallmatrix}\sigma_{j_{!}j^{*}(C)}\\i_{*}i^{!}(\varepsilon_{C})\end{smallmatrix}\right)}$ and $\psi_{1}=(-\varepsilon_{C},\sigma_{C})$.

Note that $\sigma_{j_{!}(V'')}:i_{*}i^{!}j_{!}(V'')\to j_{!}(V'')$ and $\sigma_{j_{!}(U'')}:i_{*}i^{!}j_{!}(U'')\to j_{!}(U'')$ are admissible monomorphisms. Then we have the following pushout diagrams
$$\xymatrix{
 i_{*}i^{!}j_{!}(V'') \ar@{>->}[r]^{\sigma_{j_{!}(V'')}}\ar[d]^{i_{*}(f_{2}i^{!}(\beta_1))}&j_{!}(V'') \ar@{.>}[d]^{a_1}\\
 i_{*}(V') \ar@{.>}[r]^{b_{1}}&V,\\}\eqno{\raisebox{-4ex}{(4.1)}}$$
$$
\xymatrix{
i_{*}i^{!}j_{!}(U'') \ar@{>->}[r]^{\sigma_{j_{!}(U'')}}\ar@{>->}[d]^{i_{*}(\gamma_1)}&j_{!}(U'') \ar@{>.>}[d]^{a_2}\\
i_{*}(X) \ar@{.>}[r]^{b_{2}}\ar@{->>}[d]^{i_{*}(\gamma_2)}&U\ar@{.>>}[d]\\
i_{*}(U') \ar@{=}[r]&i_{*}(U').\\}\eqno{\raisebox{-9ex}{(4.2)}}$$
Thus there exist admissible exact sequences in $\C$
$$\xymatrix@C=2.0cm{i_{*}i^{!}j_{!}(V'') \ar@{>->}[r]^{\varphi_2}&
j_{!}(V'')\oplus i_{*}(V')\ar@{->>}[r]^{\psi_2}&V,}$$
$$\xymatrix@C=2.0cm{i_{*}i^{!}j_{!}(U'') \ar@{>->}[r]^{\varphi_3}&
j_{!}(U'')\oplus i_{*}(X) \ar@{->>}[r]^{\psi_3}&U,}$$
where $\varphi_{2}={\left(\begin{smallmatrix}\sigma_{j_{!}(V'')}\\i_{*}(f_2)i_{*}i^{!}(\beta_1)\end{smallmatrix}\right)}$, $\varphi_{3}={\left(\begin{smallmatrix}\sigma_{j_{!}(U'')}\\i_{*}(\gamma_1)\end{smallmatrix}\right)}$, $\psi_{2}=(-a_{1},b_{1})$ and $\psi_{3}=(-a_{2},b_{2})$.
Applying $j^{*}$ to the commutative diagrams (4.1) and (4.2), we conclude that $j^{*}(a_1):j^{*}j_{!}(V'')\to j^{*}(V)$ and $j^{*}(a_2):j^{*}j_{!}(U'')\to j^{*}(U)$ are isomorphisms. Since $i_{*}(\gamma_{1})$ is admissible monic, so is $a_2$. Hence we have the following diagram
$$\xymatrix@C=2.0cm@R=1.2cm{
 i^{!}j_{!}j^{*}j_{!}(U'') \ar[r]^{i^{!}j_{!}j^{*}(a_2)}\ar[d]^{i^{!}(\varepsilon_{j_{!}(U'')})}&i^{!}j_{!}j^{*}(U) \ar[d]^{i^{!}(\varepsilon_{U})}\\
  i^{!}j_{!}(U'') \ar[r]^{i^{!}(a_2)}&i^{!}(U),\\}$$
such that ${i^{!}(\varepsilon_{j_{!}(U'')})}$ and ${i^{!}j_{!}j^{*}(a_2)}$ are isomorphisms. Since $i^{!}(a_2)$ is an admissible monomorphism, so is ${i^{!}(\varepsilon_{U})}$. Note that there exists an object $A\in{\A}$ such that $i^{*}(A) \mto j_{!}j^{*}(U)\stackrel{\varepsilon_{U}} \to U$ is a left exact sequence in $\C$. Thus $i^{!}i^{*}(A) \mto i^{!}j_{!}j^{*}(U)\stackrel{i^{!}(\varepsilon_{U})} \to i^{!}(U)$ is also a left exact sequence in $\A$, whence $A\cong i^{!}i^{*}(A)=0$. So $\varepsilon_{U}: j_{!}j^{*}(U)\to U$ is an admissible monomorphsm.
Applying the snake lemma (see \cite[Corollary 8.13]{TBh10}), we have the following commutative diagram such that all rows and columns are admissible exact sequences in $\C$
$$\xymatrix@C=1.7cm@R=1.5cm{
i_{*}i^{!}j_{!}j^{*}(C) \ar@{>->}[r]^{\varphi_1}\ar@{>->}[d]^{i_{*}i^{!}j_{!}(f_1)}&j_{!}j^{*}(C)\oplus i_{*}i^{!}(C)\ar@{>->}[d]^{\left(\begin{smallmatrix}j^{!}(f_1)&0\\0&i_{*}(h_1)\\\end{smallmatrix}\right)}\ar@{->>}[r]^{\psi_{1}}&C\ar@{>.>}[d]^{\tau_1}\\
i_{*}i^{!}j_{!}(V'') \ar@{>->}[r]^{\varphi_2}\ar@{->>}[d]^{i_{*}i^{!}j_{!}(g_1)}&j_{!}(V'')\oplus i_{*}(V')\ar@{->>}[d]^{{\left(\begin{smallmatrix}j^{!}(g_1)&0\\0&i_{*}(h_2)\\\end{smallmatrix}\right)}}\ar@{->>}[r]^{\psi_2}&V\ar@{.>>}[d]^{\tau_2}\\
i_{*}i^{!}j_{!}(U'') \ar@{>->}[r]^{\varphi_3}& j_{!}(U'')\oplus i_{*}(X) \ar@{->>}[r]^{\psi_3}&U\\}\eqno{\raisebox{-13.5ex}{(4.3)}}$$

Applying $j^{*}$ to the diagram (4.3) above, we obtain $j^{*}(V)\cong j^{*}j_{!}(V'')\cong V''\in{\mathcal{V}''}$ and $j^{*}(U)\cong j^{*}j_{!}(U'')\cong U''\in{\mathcal{U}''}$. If we set $\zeta:1_{\A}\to i^{!}i_{*}$ be the unit of the adjoint pair $(i_*,i^!)$ , then we obtain $i^!(\sigma_{j_{!}(V'')})\zeta_{i^{!}j_{!}(V'')}=\textrm{1}_{i^{!}j_{!}(V'')}$. This means that $i^{!}(\sigma_{j_{!}(V'')})$ is an admissible epimorphism.
Since $i^{!}(\sigma_{j_{!}(V'')})$ is an admissible monomorphism, it follows that $i^!(\sigma_{j_{!}(V'')})$ is an isomorphism. Applying $i^!$ to the commutative diagram (4.1) yields that $i^{!}(V)\cong i^{!}i_{*}(V')\cong V'\in{\mathcal{V}'}$, and therefore we obtain $V\in{\mathcal{V}}$. Next, applying $i^*$ to the second column in the diagram (4.2) leads to a right exact sequence $i^{*}j_{!}(U'')\s{i^*(a_{2})}\to i^*(U)\eto i^{*}i_{*}(U')$. Since $i^{*}j_{!}=0$, it follows that $i^*(U)\cong i^{*}i_{*}(U')\cong{U'\in{\mathcal{U}'}}$. This implies $U\in{\mathcal{U}}$. So $(\mathcal{U},\mathcal{V})$ is a complete cotorsion pair by Lemma \ref{lem:salce-lemma}, as desired.
\hfill$\Box$


\vspace{2mm}
By \cite[Example 2.12]{Psaroudakis}, the comma category $\mathcal{C}=(T\downarrow\mathcal{A})$ defined in introduction can induce the following recollement of abelian categories:

$$\xymatrixcolsep{5pc}\xymatrix{\ \mathcal{A}\ar[r]^{i_*}&\ \mathcal{C}\ar@/^1pc/[l]^{i^!}\ar@/_1pc/[l]_{i^*}\ar[r]^{j^*} &\ \mathcal{B}\ar@/^1pc/[l]^{j_*}\ar@/_1pc/[l]_{j_!},}\eqno{(4.4)}$$
where $i^{*}(\left(\begin{smallmatrix}A\\B\end{smallmatrix}\right)_{f})=\textrm{coker}f$, $i^{!}(\left(\begin{smallmatrix}A\\B\end{smallmatrix}\right)_{f})=A$ and $j^{*}(\left(\begin{smallmatrix}A\\B\end{smallmatrix}\right)_{f})=B$ for any $\left(\begin{smallmatrix}A\\B\end{smallmatrix}\right)_{f}\in{(T\downarrow\mathcal{A})}$,
$i^{*}(A)=\left(\begin{smallmatrix}A\\0\end{smallmatrix}\right)_{0}$ for any $A\in{\mathcal{A}}$, and $j^{!}(B)=\left(\begin{smallmatrix}T(B)\\B\end{smallmatrix}\right)_{\textrm{id}}$ and  $j_{*}(B)=\left(\begin{smallmatrix}0\\B\end{smallmatrix}\right)_{0}$ for any $B\in{\mathcal{B}}$.

\begin{rem}\label{remark:3.9} Assume that $T:\mathcal{B}\rightarrow\mathcal{A}$ is a right exact functor between abelian categories with enough projective and injective objects. Note that $i^{!}$ is an exact functor in the above recollement (4.4). It follows from Proposition \ref{prop:2.5}(1) that $\mathcal{C}_{1}=\{C\in {\mathcal{C}} \ | \ i^{*}j_{*} j^{*}(C)=0\}=\mathcal{C}$ and $\mathcal{B}_1=\{B\in{\B} \ | \ B\cong j^{*}(C)  \ \textrm{for} \ \textrm{some} \ C\in{\mathcal{C}_1}\}=\mathcal{B}$. So Theorem \ref{thm:1.1} here is just Lemma 3.3 and Proposition 3.4 in \cite{Huzhu}. Note that, in \cite{Huzhu}, one of the key arguments in the proof is that all objects in $(T\downarrow\mathcal{A})$ can be represented clearly by
the objects in $\mathcal{A}$ and $\mathcal{B}$, while in our general context we do not have this fact and therefore must avoid this kind
of arguments. So, the idea of proving Theorem \ref{thm:1.1} will be different from the one in \cite{Huzhu}.
\end{rem}

We end this section with the following example which illustrates  Theorem \ref{thm:1.1}.

\begin{ex}\label{ex:3.2} Let $A=B$ be the path algebra $k(1 \rightarrow 2)$, where $\mathrm{char} k \neq 2$. Take $M = N =
Ae_{2} \otimes_{k} e_{1}A$. The Auslander-Reiten quiver $\Gamma(\mathrm{mod}\text{-}A)$ of the module category $\text{mod-}A$ has the form
$$\xymatrix@M=1pt@!0{
   &   A{e_{1}} \ar[dr]^{\pi}\\
   S_{2}\ar[ur]^{\sigma}&& S_{1}. }$$
Keep the notation of Example \ref{ex:3.1}. Thus we have
$\mathcal{B}_{1}=\{Y\in\text{ Mod-}A\mid \mathrm{Hom}_{A}(N,Y)=0\}=\mathrm{Add}(S_{1})$, which implies
\begin{align*}
\mathcal{C}_{1}&=\{\left(\begin{smallmatrix}  X  \\   Y \\\end{smallmatrix}\right)_{f,g}\in\text{Mod-}\Lambda\mid Y\in{\B}_{1}\}\\
 &= \mathrm{Add}\{\left(\begin{smallmatrix}  S_{2}  \\   S_{1} \\\end{smallmatrix}\right)_{0,1},~\left(\begin{smallmatrix}  Ae_{1}  \\   S_{1} \\\end{smallmatrix}\right)_{0,\sigma},~\left(\begin{smallmatrix}  S_{1}  \\   S_{1} \\\end{smallmatrix}\right)_{0,0},~\left(\begin{smallmatrix}  Ae_{1}  \\  0 \\\end{smallmatrix}\right)_{0,0},~\left(\begin{smallmatrix}  S_{2}  \\  0 \\\end{smallmatrix}\right)_{0,0},~\left(\begin{smallmatrix}  S_{1}  \\   0 \\\end{smallmatrix}\right)_{0,0}\}.
\end{align*}
Thus $\mathrm{Add}\{\left(\begin{smallmatrix}  S_{2}  \\   S_{1} \\\end{smallmatrix}\right)_{0,1},~\left(\begin{smallmatrix}  Ae_{1}  \\   0 \\\end{smallmatrix}\right)_{0,0},~\left(\begin{smallmatrix}  S_{2}  \\   0 \\\end{smallmatrix}\right)_{0,0}\}$ is the class of projective objects of $\mathcal{C}_{1}$. It is an easy exercise to show that $\C_{1}$ has enough projective objects. Denote by $\textrm{Proj}(\C_{1})$ the class of projective objects of $\C_{1}$. Similarly, one can show that $\C_{1}$ has enough injective objects.

Take $(\mathcal{U}',\mathcal{V}')=(\textrm{Proj}(\C_{1}),\C)$ and $(\mathcal{U}'',\mathcal{V}'')=(\B_{1},\B_{1})$. Clearly, $j_{!}$ is $\mathcal{U}''$-exact since $\mathcal{U}''$ is the class of projective objective of $\B_{1}$. If we set $\mathcal{V}=\mathcal{C}_{1}$ and
  \begin{align*}
   \mathcal{U}&=\left\{\left(\begin{smallmatrix}  X  \\   Y \\\end{smallmatrix}\right)_{f,g}\in\text{Mod-}\Lambda\bigg|
   {\left.
\begin{aligned}\mathrm{coker}g\in \mathrm{Proj}~A,~Y\in{\B_{1}},~g\text{~is~~ monomorphic}\end{aligned}
\right.}\right\},
 \end{align*}
then $\mathcal{(U,V)}={(\mathscr{M}^{\mathcal{U}'}_{\mathcal{U}''}\,\mathscr{N}_{\mathcal{V}''}^{\mathcal{V}'})}$ is a projective cotorsion pair in $\mathcal{C}_{1}$ by Theorem \ref{thm:1.1}.

Finally, we consider the recollement (3.4) of module categories in Example \ref{ex:3.1}. It is cleat that $i^{!}:\text{Mod-} \Lambda\to \text{Mod-} A$ is not an exact functor.
Moreover, if we set $(\mathcal{U}',\mathcal{V}')=(\mathcal{U}'',\mathcal{V}'')=(\mathrm{Add}\{S_{2},Ae_{1}\},\text{Mod-} A)$ in , then
$$\mathscr{M}^{\mathcal{U}'}_{\mathcal{U}''}=\{\left(\begin{smallmatrix}  X  \\   Y \\\end{smallmatrix}\right)_{f,g}\in\text{Mod-}\Lambda\mid
\mathrm{coker}g\in \mathrm{Proj}~A,~Y\in \mathrm{Proj}~A,~g \text{~is~~ monomorphic}
  \}$$
 and $\mathscr{N}_{\mathcal{V}''}^{\mathcal{V}'}=\text{Mod-} \Lambda$. If follows from \cite[Theorem 4.4]{ZH} that $(\mathscr{M}^{\mathcal{U}'}_{\mathcal{U}''},\mathscr{N}_{\mathcal{V}''}^{\mathcal{V}'})$ is no longer a cotorsion pair in $\text{Mod-} \Lambda$. So the exactness of the functor $i^!$ in Theorem \ref{thm:1.1} cannot be omitted in general.
\end{ex}

\subsection{Proof of Theorem \ref{thm:main1}}\label{Models and recollement}

We begin this subsection with the following lemma, which provides us a method to construct a Hovey triple from two cotorsion pairs in a WIC exact category.

\begin{lem} {\rm \cite[Theorem 1.1]{gj14}\label{lem4.2}} Let $\mathcal{C}$ be a WIC exact category and suppose $(\mathcal{Q},\widetilde{\mathcal{R}})$ and $(\widetilde{\mathcal{Q}},\mathcal{R})$ are complete hereditary cotorsion pairs over $\mathcal{C}$ with (1) $\widetilde{\mathcal{Q}}\subseteq \mathcal{Q}$, (2) $\mathcal{Q}\cap\widetilde{\mathcal{R}}=\widetilde{\mathcal{Q}}\cap\mathcal{R}$. Then there exists a unique exact model
structure $(\mathcal{Q}, \mathcal{W} , \mathcal{R})$, and its class $\mathcal{W}$ of trivial objects is given by
\begin{align*}
 \mathcal{W }&=\{ X \in\mathcal{C} \mid \exists~\text{an admissible exact sequence}~ X\mto R\eto Q~with~R\in\widetilde{\mathcal{R}},~Q\in\widetilde{\mathcal{Q}} \}\\
&=\{ X \in\mathcal{C} \mid \exists~\text{an admissible exact sequence}~R'\mto Q'\eto X~with~R'\in\widetilde{\mathcal{R}},~Q'\in\widetilde{\mathcal{Q}}\}.
\end{align*}
\end{lem}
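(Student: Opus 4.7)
The plan is to follow the standard Hovey--Gillespie strategy: define $\mathcal{W}$ by one of the two given descriptions, verify that the other description coincides with it, identify $\widetilde{\mathcal{R}}$ with $\mathcal{R}\cap\mathcal{W}$ and $\widetilde{\mathcal{Q}}$ with $\mathcal{Q}\cap\mathcal{W}$, check thickness of $\mathcal{W}$, and then invoke Theorem \ref{thm:2.4}. The first preliminary observation is that the hypotheses force $\widetilde{\mathcal{R}}\subseteq\mathcal{R}$ (because $\widetilde{\mathcal{Q}}\subseteq\mathcal{Q}$ and $\widetilde{\mathcal{R}}=\mathcal{Q}^{\perp}\subseteq\widetilde{\mathcal{Q}}^{\perp}=\mathcal{R}$), and, combined with (2), the common class $\omega:=\mathcal{Q}\cap\widetilde{\mathcal{R}}=\widetilde{\mathcal{Q}}\cap\mathcal{R}$ will serve as the prospective class of trivially cofibrant-and-fibrant objects.

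Taking $\mathcal{W}$ to be the class described by the first short exact sequence, I would then prove equivalence with the second description by applying completeness of $(\widetilde{\mathcal{Q}},\mathcal{R})$ and $(\mathcal{Q},\widetilde{\mathcal{R}})$ to construct the opposite-direction short exact sequence. Concretely, starting from $0\to X\to R\to Q\to 0$ one forms a special $\widetilde{\mathcal{Q}}$-precover of $Q$, takes a pullback, and uses hereditariness together with (2) to keep the resulting middle term in $\widetilde{\mathcal{R}}$; the reverse direction is dual. The identification $\mathcal{Q}\cap\mathcal{W}=\widetilde{\mathcal{Q}}$ proceeds by noting that if $X\in\mathcal{Q}\cap\mathcal{W}$ with a witnessing sequence $0\to X\to R\to Q\to 0$, then $R\in\mathcal{Q}$ by extension-closure, so $R\in\mathcal{Q}\cap\widetilde{\mathcal{R}}=\widetilde{\mathcal{Q}}\cap\mathcal{R}\subseteq\widetilde{\mathcal{Q}}$; applying $\Hom_{\mathcal{C}}(-,R'')$ to the sequence for any $R''\in\mathcal{R}$ and invoking Lemma \ref{lem:hereditary-cotorsion-pair} kills $\Ext^{i}_{\mathcal{C}}(Q,R'')$ and $\Ext^{1}_{\mathcal{C}}(R,R'')$, forcing $\Ext^{1}_{\mathcal{C}}(X,R'')=0$ and placing $X\in{}^{\perp}\mathcal{R}=\widetilde{\mathcal{Q}}$. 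The reverse inclusion uses a special $\widetilde{\mathcal{R}}$-preenvelope of $X\in\widetilde{\mathcal{Q}}$ combined with (2) and hereditariness to witness $X\in\mathcal{W}$. The dual identification $\mathcal{R}\cap\mathcal{W}=\widetilde{\mathcal{R}}$ is argued symmetrically.

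The main obstacle I anticipate is proving that $\mathcal{W}$ is thick, i.e., closed under direct summands and satisfying the 2-out-of-3 property for short exact sequences in $\mathcal{C}$. Closure under summands uses weak idempotent completeness of $\mathcal{C}$ together with the direct-sum closure of $\widetilde{\mathcal{Q}}$ and $\widetilde{\mathcal{R}}$. For 2-out-of-3, given an exact sequence $0\to A\to B\to C\to 0$ with two of $A,B,C\in\mathcal{W}$, the strategy is to splice the respective witnessing sequences via a horseshoe-style construction in the exact category $\mathcal{C}$ (using pushouts and pullbacks admissible because of the extension axioms), and then rebuild a witnessing sequence for the third object. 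The delicate point is ensuring that the newly constructed middle terms stay in $\widetilde{\mathcal{R}}$ or $\widetilde{\mathcal{Q}}$ after the pushout/pullback, which repeatedly requires condition (2) to transfer objects between the two cotorsion pair descriptions and hereditariness to control the kernels and cokernels that arise. Once thickness is established, the given cotorsion pairs coincide with $(\mathcal{Q},\mathcal{R}\cap\mathcal{W})$ and $(\mathcal{Q}\cap\mathcal{W},\mathcal{R})$, so Theorem \ref{thm:2.4} produces the unique exact model structure $(\mathcal{Q},\mathcal{W},\mathcal{R})$ on $\mathcal{C}$; uniqueness of $\mathcal{W}$ follows from the fact that the class of trivial objects in any exact model structure is determined by its two associated cotorsion pairs.
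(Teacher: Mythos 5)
The paper itself does not prove this lemma; it quotes it verbatim from \cite[Theorem 1.1]{gj14}, so the only benchmark is the cited source, and your overall architecture (define $\mathcal{W}$, identify $\mathcal{Q}\cap\mathcal{W}=\widetilde{\mathcal{Q}}$ and $\mathcal{R}\cap\mathcal{W}=\widetilde{\mathcal{R}}$, prove thickness, invoke Theorem \ref{thm:2.4}) is indeed the shape of Gillespie's argument. The genuine problem is your step ``prove equivalence of the two descriptions''. As printed in the statement the second description attaches the classes to the wrong terms: it should read $0\to R'\to Q'\to X\to 0$ with the \emph{kernel} $R'\in\widetilde{\mathcal{R}}$ and the \emph{middle} term $Q'\in\widetilde{\mathcal{Q}}$, which is what \cite{gj14} states and what duality forces (passing to $\mathcal{C}^{\mathrm{op}}$ interchanges $\widetilde{\mathcal{Q}}$ and $\widetilde{\mathcal{R}}$ and reverses the arrows of the first description). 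With the version printed here (kernel in $\widetilde{\mathcal{Q}}$, middle in $\widetilde{\mathcal{R}}$) the equality is simply false: take $\mathcal{C}=\mathrm{Mod}\text{-}\mathbb{Z}$ and both cotorsion pairs equal to $(\mathrm{Mod}\text{-}\mathbb{Z},\mathcal{I})$ with $\mathcal{I}$ the injectives; all hypotheses hold and $\mathcal{W}$ must be all of $\mathrm{Mod}\text{-}\mathbb{Z}$, yet $\mathbb{Z}$ is not an epimorphic image of a divisible group, so it does not lie in the printed second class. Hence no proof of the displayed equality can succeed, and your concrete construction does not help even for the corrected statement: a special $\widetilde{\mathcal{Q}}$-precover $0\to R_3\to Q_3\to Q\to 0$ (whose kernel lands in $\widetilde{\mathcal{Q}}\cap\mathcal{R}=\mathcal{Q}\cap\widetilde{\mathcal{R}}\subseteq\widetilde{\mathcal{R}}$) followed by a pullback along $R\to Q$ exhibits $X$ again as a subobject, giving $0\to X\to P\to Q_3\to 0$ with $P\in\widetilde{\mathcal{R}}$, $Q_3\in\widetilde{\mathcal{Q}}$ --- i.e.\ the first description once more. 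The correct route takes a special $\mathcal{Q}$-precover $0\to R_1\to Q_1\to X\to 0$ with $R_1\in\widetilde{\mathcal{R}}$ and then shows $Q_1\in\widetilde{\mathcal{Q}}$, which needs the identification and thickness results first; it is not a one-line pullback.

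Two further points. Your verification of $\mathcal{Q}\cap\mathcal{W}=\widetilde{\mathcal{Q}}$ invokes Lemma \ref{lem:hereditary-cotorsion-pair}, which is stated only for exact categories with enough projectives and injectives; no such hypothesis is available here. The appeal is in fact unnecessary: once $R\in\mathcal{Q}\cap\widetilde{\mathcal{R}}=\widetilde{\mathcal{Q}}\cap\mathcal{R}\subseteq\widetilde{\mathcal{Q}}$, the object $X$ is the kernel of the admissible epimorphism $R\to Q$ between objects of $\widetilde{\mathcal{Q}}$, so $X\in\widetilde{\mathcal{Q}}$ follows directly from the closure-property definition of hereditary, with no $\Ext^{2}$ needed. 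Finally, thickness of $\mathcal{W}$ (two-out-of-three and closure under direct summands) is precisely the technical core of \cite{gj14} and is only gestured at; in particular closure under direct summands does not follow merely from weak idempotent completeness plus summand closure of $\widetilde{\mathcal{Q}}$ and $\widetilde{\mathcal{R}}$, since one must actually manufacture a witnessing short exact sequence for the summand. As written, these are the gaps to fill --- or, more economically, correct the second description and cite \cite[Theorem 1.1]{gj14} as the paper does.
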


 Let $\mathcal{M}_{\mathcal{A}}=(\mathcal{U}'_{1},\mathcal{W}',\mathcal{V}'_{2})$ be a hereditary exact model structure on $\mathcal{A}$. Then we have two complete hereditary cotorsion pairs $(\mathcal{U}'_{1},\mathcal{V}'_{1})$, $(\mathcal{U}'_{2},\mathcal{V}'_{2})$ in $\mathcal{A}$, where $\mathcal{V}'_{1}=\mathcal{W}'\cap\mathcal{V}'_{2}$, $\mathcal{U}'_{2}=\mathcal{U}'_{1}\cap\mathcal{W}'$.   Let $\mathcal{M}_{\mathcal{B}}=(\mathcal{U}''_{1},\mathcal{W}'',\mathcal{V}''_{2})$ be an exact model structure on $\mathcal{B}$. Similarly, we obtain two complete hereditary cotorsion pairs $(\mathcal{U}''_{1},\mathcal{V}''_{1})$, $(\mathcal{U}''_{2},\mathcal{V}''_{2})$ in $\mathcal{B}$, where $\mathcal{V}''_{1}=\mathcal{W}''\cap\mathcal{V}''_{2}$, $\mathcal{U}''_{2}=\mathcal{U}''_{1}\cap\mathcal{W}''$.
 Therefore, if $j_{!}$ is $\mathcal{U}''_{1}$-exact, from  Theorem \ref{thm:1.1} we will obtain two complete hereditary cotorsion pairs  $(\mathcal{U}_{1},\mathcal{V}_{1})$  and $(\mathcal{U}_{2},\mathcal{V}_{2})$  in $\C$, where $\mathcal{U}_1=\mathscr{M}^{\mathcal{U}'_{1}}_{\mathcal{U}''_{1}}$, $\mathcal{V}_1=\mathscr{N}_{\mathcal{V}''_{1}}^{\mathcal{V}'_{1}}$, $\mathcal{U}_2=\mathscr{M}^{\mathcal{U}'_{2}}_{\mathcal{U}''_{2}}$ and $\mathcal{V}_2=\mathscr{N}_{\mathcal{V}''_{2}}^{\mathcal{V}'_{2}}$.
  If $\mathcal{U}_{1}\cap\mathcal{V}_{1}=\mathcal{U}_{2}\cap\mathcal{V}_{2}$, then by Lemma \ref{lem4.2}, there exists a unique class $\mathcal{W}$, such that  $\mathcal{M}_{\mathcal{C}}=(\mathcal{U}_{1},\mathcal{W},\mathcal{V}_{2})$ is a Hovey triple in $\mathcal{C}$, and $\mathcal{W}\cap\mathcal{V}_{2}=\mathcal{V}_{1}$, $\mathcal{U}_{1}\cap\mathcal{W}=\mathcal{U}_{2}$.

A \emph{Quillen map} of model categories $\mathcal{M} \rightarrow \mathcal{N}$ consists of a pair
of adjoint functors $(L, R) : \mathcal{M} \rightleftarrows \mathcal{N}$ such that $L$ preserves cofibrations and trivial
cofibrations (it is equivalent to require that $R$ preserves fibrations and trivial fibrations). In this case the pair $(L,R)$ is also called a \emph{Quillen adjunction}. A Quillen map induces adjoint total derived functors between the homotopy
categories \cite{ma12}. The class of weak equivalences is the most important class of morphisms in a model category. The following important characterization
is proved in \cite[Corollary 3.4]{gj11}.

\begin{lem}\label{lem4.3} Let $\mathcal{M= (Q,W,R)}$ be a Hovey triple. Then a morphism is a weak
equivalence if and only if $f=pi$, where $i$ is an admissible monomorphism with ${\rm coker}i\in{\mathcal{Q}\cap\mathcal{W}}$ and $p$ is an admissible epimorphism with ${\rm ker}p\in{\mathcal{R}\cap\mathcal{W}}$.
\end{lem}

Before giving our main result, we need the following crucial result.

\begin{prop}\label{pro4.4} Let $(\mathcal{A},\mathcal{C},\mathcal{B})$ be a recollement of exact categories with $i^{!}$ exact, and let $\mathcal{M}_{\mathcal{A}}=(\mathcal{U}'_{1},\mathcal{W}',\mathcal{V}'_{2})$ and $\mathcal{M}_{\mathcal{B}}=(\mathcal{U}''_{1},\mathcal{W}'',\mathcal{V}''_{2})$ be hereditary exact model structures on $\mathcal{A}$ and $\mathcal{B}$, respectively. We set $\mathcal{U}_1=\mathscr{M}^{\mathcal{U}'_{1}}_{\mathcal{U}''_{1}}$, $\mathcal{V}_1=\mathscr{N}_{\mathcal{W}''\cap\mathcal{V}''_{2}}^{\mathcal{W}'\cap\mathcal{V}'_{2}}$, $\mathcal{U}_2=\mathscr{M}^{\mathcal{U}'_{1}\cap\mathcal{W}'}_{\mathcal{U}''_{1}\cap\mathcal{W}''}$ and $\mathcal{V}_2=\mathscr{N}_{\mathcal{V}''_{2}}^{\mathcal{V}'_{2}}$. Assume that $j_{!}$ is $\mathcal{U}''_{1}$-exact and  $\mathcal{U}_{1}\cap\mathcal{V}_{1}=\mathcal{U}_{2}\cap\mathcal{V}_{2}$. Then the following hold.

\begin{enumerate}
\item There is a hereditary exact model structure $\mathcal{M}_{\mathcal{C}}=(\mathcal{U}_{1},\mathcal{W},\mathcal{V}_{2})$ on $\mathcal{C}$, where the class $\mathcal{W}$ is given by
\begin{align*}
 \mathcal{W }&=\{ X \in\mathcal{C} \mid \exists~\text{an admissible exact sequence}~  X\mto R\eto Q~with~R\in{\mathcal{V}_1},~Q\in{\mathcal{U}_2} \}\\
&=\{ X \in\mathcal{C} \mid \exists~\text{an admissible exact sequence}~  R'\mto Q'\eto X~with~R'\in{\mathcal{V}_1},~Q'\in{\mathcal{U}_2}\}.
\end{align*}

\item We have the following localization sequence of triangulated categories
$$\xymatrixcolsep{5pc}\xymatrix{
  \mathrm{Ho}(\mathcal{M}_{\mathcal{A}}) \ar@<0.6ex>[r]^{L(i_{\ast})} & \mathrm{Ho}(\mathcal{M}_{\mathcal{C}})\ar@/^2pc/[l]^{R(i^{!})}\ar@<0.6ex>[r]^{L(j^{\ast})} & \mathrm{Ho}(\mathcal{M}_{\mathcal{B}})\ar@/^2pc/[l]^{R(j_{\ast})} }$$
 where $L(i_{\ast})$, $L(j^{\ast})$, $R(i^{!})$ and $R(j_{\ast})$ are the total derived functors of those in (1.1).

 \item Then we have the following colocalization sequence of triangulated categories
$$\xymatrixcolsep{5pc}\xymatrix{
  \mathrm{Ho}(\mathcal{M}_{\mathcal{A}}) \ar@<-0.6ex>[r]_-{R(i_{\ast})} & \mathrm{Ho}(\mathcal{M}_{\mathcal{C}})\ar@/_2pc/[l]_{L(i^{\ast})}\ar@<-0.6ex>[r]_{R(j^{\ast})} & \mathrm{Ho}(\mathcal{M}_{\mathcal{B}})\ar@/_2pc/[l]_{L(j_{!})} }$$
where $L(i^{\ast})$, $L(j_{!})$, $R(i_{\ast})$ and $R(j^{\ast})$ are the total derived functors of those in (1.1).
\end{enumerate}

\end{prop}
\begin{proof}
(1) Let $\mathcal{V}'_{1}=\mathcal{W}'\cap\mathcal{V}'_{2}$, $\mathcal{U}'_{2}=\mathcal{U}'_{1}\cap\mathcal{W}'$ and $\mathcal{V}''_{1}=\mathcal{W}''\cap\mathcal{V}''_{2}$, $\mathcal{U}''_{2}=\mathcal{U}''_{1}\cap\mathcal{W}''$.
If $j_{!}$ is $\mathcal{U}''_{1}$-exact,  then by Theorem \ref{thm:1.1}, $(\mathcal{U}_{1},\mathcal{V}_{1})$ and  $(\mathcal{U}_{2},\mathcal{V}_{2})$ are two complete cotorsion pairs in $\mathcal{C}$. Because $\mathcal{U}_{2}=\mathfrak{M}^{\mathcal{U}'_{1}\cap\mathcal{W}'}_{\mathcal{U}''_{1}\cap\mathcal{W}''}\subseteq \mathfrak{M}^{\mathcal{U}'_{1}}_{\mathcal{U}''_{1}}=\mathcal{U}_{1}$ and  $\mathcal{U}_{1}\cap\mathcal{V}_{1}=\mathcal{U}_{2}\cap\mathcal{V}_{2}$, by Lemma \ref{lem4.2}, there exists a unique class $\mathcal{W}$, such that $\mathcal{M}_{\mathcal{C}}=(\mathcal{U}_{1},\mathcal{W},\mathcal{V}_{2})$ is a hereditary exact model structure on $\mathcal{C}$.

(2) We first claim that $(i_{\ast},i^{!})$ and $(j^{\ast},j_{\ast})$ are Quillen adjunctions. Since (trivial) cofibrations equal admissible monomorphisms with (trivially) cofibrant cokernels and (trivial) fibrations equal admissible epimorphisms with (trivially) fibrant kernels, the inclusions $i_{\ast}(\mathcal{U}'_{1})\subseteq\mathcal{ U}_{1}$ and $i_{\ast}(\mathcal{U}'_{1}\cap\mathcal{W}')\subseteq {\mathcal{U}_{1}\cap\mathcal{W}}$ imply that $i_{\ast}$ preserves cofibrations and trivial cofibrations. Thus $(i_{\ast},i^{!})$ is a Quillen adjunction.
Similarly, $j^{\ast}$ is a left adjoint and preserves cofibrations and trivial cofibrations. Hence $(j^{\ast},j_{\ast})$ is a Quillen adjunction by the definition.
By \cite[Proposition 16.2.2]{ma12}, the total derived functors $L(i_{\ast})$ and $R(i^{!})$ exist and form an adjoint between $\mathrm{Ho}(\mathcal{M}_{\mathcal{A}})$ and $\mathrm{Ho}(\mathcal{M}_{\mathcal{C}})$,  $L(j^{\ast})$ and $R(j_{\ast})$ exist and form an adjoint between $\mathrm{Ho}(\mathcal{M}_{\mathcal{C}})$ and $\mathrm{Ho}(\mathcal{M}_{\mathcal{B}})$.
That is, we have the following diagram
$$\xymatrixcolsep{5pc}\xymatrix{
  \mathrm{Ho}(\mathcal{M}_{\mathcal{A}}) \ar@<0.6ex>[r]^{L(i_{\ast})} & \mathrm{Ho}(\mathcal{M}_{\mathcal{C}})\ar@/^2pc/[l]^{R(i^{!})}\ar@<0.6ex>[r]^{L(j^{\ast})} & \mathrm{Ho}(\mathcal{M}_{\mathcal{B}})\ar@/^2pc/[l]^{R(j_{\ast})}.}\eqno{(4.5)}$$
In general, the right derived functor is defined on objects by first taking a fibrant replacement and then applying the functor. Similarly, the left derived functor is defined by first taking a cofibrant replacement and then applying the functor. So we have computed $(L(i_{\ast}),R(i^{!}))=(i_{\ast}Q_{\mathcal{A}},i^{!}R_{\mathcal{C}})$ and $(L(j^{\ast}),R(j_{\ast}))=(j^{\ast}Q_{\mathcal{C}},j_{\ast}R_{\mathcal{B}})$. Here, the notation such as $Q_{\mathcal{A}}$ means to take a special $\mathcal{U}'_{1}$-precover. Similarly the notation $R_{\mathcal{C}}$ means to take a special $\mathcal{V}_{2}$-preenvelope.
Recall from \cite[Proposition 4.4 and Section 5]{gj11} that the distinguished triangles in Ho$(\mathcal{M})$
are, up to isomorphism, the images in Ho$(\mathcal{M})$ of distinguished triangles in $(\mathcal{Q}\cap \mathcal{R})/\omega$ under
the equivalence $(\mathcal{Q}\cap \mathcal{R})/\omega\rightarrow \mathrm{Ho}(\mathcal{M})$.
By an argument similar to that in \cite[Corollary 2.10]{GC}, we see that these four functors  are triangulated
functors.

In order to show that the diagram (4.5) is a localization sequence, it remains to show

(i) $R(i^{!})\circ L(i_{\ast})\cong 1_{\mathrm{Ho}(\mathcal{M}_{\mathcal{A}})}$.

(ii) $L(j^{\ast})\circ R(j_{\ast})\cong 1_{\mathrm{Ho}(\mathcal{M}_{\mathcal{B}})}$.

(iii) The essential image of $L(i_{\ast})$ equals the kernel of $L(j^{\ast})$.

To prove (i), let $f:X\rightarrow Y$ be a homomorphism in $\mathcal{A}$.
Using the completeness of the cotorsion pair $(\mathcal{U}'_{1}\cap\mathcal{W}',\mathcal{V}'_{2})$, we get the following commutative diagram
$$\xymatrix{
   X \ar[d]_{f} \ar@{>->}[r]^{q} & X' \ar[d]_{\widetilde{f}} \ar@{->>}[r]^{} & Z_{1}  \\
   Y \ar@{>->}[r]^{q'} & Y' \ar@{->>}[r]^{} & Z'_{1}, }$$
  where $X',Y'\in \mathcal{V}'_{2}$, $Z_{1},Z'_{1}\in \mathcal{U}'_{1}\cap\mathcal{W}'$. Note that $X'$ and $Y'$ are  fibrant replacements of $X$ and $Y$ in $\mathcal{M}_{\mathcal{A}}$, respectively. So both $q$ and $q'$ are  natural isomorphisms in $\mathrm{Ho}(\mathcal{M}_{\mathcal{A}})$. The functor $i_{\ast}Q_{\mathcal{A}}$ acts by $\widetilde{f}\mapsto \widehat{f}$, where $\widehat{f}$ is any map making the diagram below commute
$$\xymatrix{
   i_{\ast}K  \ar@{>->}[r]^{} & i_{\ast}H_{1}\ar[d]_{\widehat{f}} \ar@{->>}[r]^{j} & i_{\ast}X' \ar[d]_{i_{\ast}\widetilde{f}}  \\
   i_{\ast}K' \ar@{>->}[r]^{} & i_{\ast}H'_{1} \ar@{->>}[r]^{j'} & i_{\ast}Y' ,   }$$
 where the rows are admissible exact sequences, $H_{1},H'_{1}\in\mathcal{U}'_{1}$, and $K,K'\in \mathcal{W}'\cap\mathcal{V}'_{2}$. Moreover, we obtain $H_{1},H'_{1}\in\mathcal{U}'_{1}\cap\mathcal{V}'_{2}$ since $\mathcal{V}'_{2}$ is closed under extensions. Now applying $i^{!}R_{\mathcal{C}}$ to $\widehat{f}$ gives us $\overline{f}$ in the next commutative diagram
$$\xymatrix{
   i^{!}i_{\ast}H_{1} \ar[d]_{i^{!}\widehat{f}} \ar@{>->}[r]^{p} & i^{!}L_{1} \ar[d]_{\overline{f}} \ar@{->>}[r]^{} & i^{!}C_{1}  \\
   i^{!}i_{\ast}H'_{1} \ar@{>->}[r]^{p'} & i^{!}L'_{1} \ar@{->>}[r]^{} & i^{!}C'_{1},   }$$
where $L_{1}, L'_{1}\in \mathcal{V}_{2}$, $C_{1}, C'_{1}\in \mathcal{U}_{1}\cap\mathcal{W}=\mathcal{U}_{2}$.
 Since $i_{\ast}H_{1},i_{\ast}H'_{1}\in \mathcal{V}_{2}$ and $\mathcal{V}_{2}$ is coresolving, we get that $C_{1}, C'_{1}\in\mathcal{U}_{2}\cap\mathcal{V}_{2}=\mathcal{U}_{1}\cap\mathcal{V}_{1}$ by hypotheses.

Furthermore, it is easy to check the inclusions $i_{\ast}(\mathcal{W}'\cap\mathcal{V}'_{2})=i_{\ast}(\mathcal{V}'_{1})\subseteq \mathcal{V}_{1}=\mathcal{W}\cap\mathcal{V}_{2}$ and $i^{!}(\mathcal{U}_{1}\cap\mathcal{V}_{1})\subseteq i^{!}(\mathcal{V}_{1})\subseteq \mathcal{V}'_{1}=\mathcal{W}'\cap\mathcal{V}'_{2}$. Thus $i^{!}j,~i^{!}j',~p$ and $p'$ are all weak equivalences in $\mathcal{M}_{\mathcal{A}}$ by Lemma \ref{lem4.3}. So, in $\mathrm{Ho}(\mathcal{M}_{\mathcal{A}})$, we have a commutative diagram
$$\xymatrix{
 X\ar[r]^{q}\ar[d]_{f}& X' \ar[d]_{\widetilde{f}} \ar[r]^{\nu_{X'}} & i^{!}i_{\ast}X' \ar[d]_{ i^{!}i_{\ast}\widetilde{f}} & i^{!}i_{\ast}H_{1} \ar[r]^{p} \ar[l]_{i^{!}j} \ar[d]^{i_{\ast}\widehat{f}} & i^{!}L_{1}\ar[d]^{\overline{f}} \\
Y\ar[r]^{q'}&  Y' \ar[r]^{\nu_{Y'}} & i^{!}i_{\ast}Y' & i^{!}i_{\ast}H'_{1} \ar[l]_{i^{!}j'}\ar[r]^{p'} & i^{!}L_{2},  }$$
where $\nu:1_{\mathcal{A}}\rightarrow i^{!}i_{\ast}$ is the unit of the adjoint pair $(i_{\ast}, i^{!})$. This diagram gives rise to a natural isomorphism: $R(i^{!})\circ L(i_{\ast})\cong 1_{\mathrm{Ho}(\mathcal{M}_{\mathcal{A}})}.$

Next we prove (ii).  Let $X\in \mathrm{Ho}(\mathcal{M}_{\mathcal{B}})$ be any object. Using the completeness of the cotorsion pair $(\mathcal{U}''_{2},\mathcal{V}''_{2})$, we obtain an admissible exact sequence $X\rightarrowtail E\twoheadrightarrow L$ with $E\in\mathcal{V}''_{2}$ and $L\in \mathcal{U}''_{2}$. Note that $E$ is a fibrant replacement of $X$ in $\mathcal{M}_{\mathcal{B}}$, so we have a natural isomorphism $X\cong E$ in $\mathrm{Ho}(\mathcal{M}_{\mathcal{B}})$. By Lemma \ref{lem:property-of-recollement1}(5), $j_{\ast}$ is exact. Then the functor $R(j_{\ast})=j_{\ast}R_{\mathcal{B}}$ acts by $X\mapsto j_{\ast}E$, where $j_{\ast}E$ is in the admissible exact sequence $j_{\ast}X\rightarrowtail j_{\ast}E\twoheadrightarrow j_{\ast}L$.
Now applying $j^{\ast}Q_{\mathcal{C}}$ to $j_{\ast}E$ gives us $j^{\ast}N$ in the next admissible exact sequence
$$ j^{\ast}K\rightarrowtail j^{\ast}N\overset{\mu}{\twoheadrightarrow} j^{\ast}j_{\ast}E,$$
where $N$ is a cofibrant replacement of $j_{\ast}E$, $N\in\mathcal{U}_{1}$, $K\in \mathcal{W}\cap\mathcal{V}_{2}$. By  inclusion  $j^{\ast}(\mathcal{W}\cap\mathcal{V}_{2})=j^{\ast}(\mathcal{V}_{1})\subseteq \mathcal{V}''_{1}=\mathcal{W}''\cap\mathcal{V}''_{2}$, we see that $\mu$ is a weak equivalence in $\mathcal{M}_{\mathcal{B}}$. Hence, we have isomorphisms $L(j^{\ast})\circ R(j_{\ast})(X)\cong j^{\ast}N\stackrel{\mu}\cong j^{\ast}j_{\ast}E\cong E\cong X$ in $\mathrm{Ho}(\mathcal{M}_{\mathcal{B}})$. By an argument similar to that in (i), we see that these isomorphisms are natural.

For (iii),  let $X$ belongs to the essential image of $L(i_{\ast})$.  Then there exist an object $Y\in\mathcal{U}'_{1}$, such that $X\cong i_{\ast}Y$ in $\mathrm{Ho}(\mathcal{M}_{\mathcal{C}})$. Since $i_{\ast}Y\in \mathcal{U}_{1}$,  $L(j^{\ast})(X)=j^{\ast}i_{\ast}X=0$. Hence $X$ is contained in  kernel of $L(j^{\ast})$.

Conversely, let $X$ belongs to the  kernel of $L(j^{\ast})$. Then $L(j^{\ast})(X)$ is  a zero object in Ho$(\mathcal{M}_{\mathcal{B}})$, that is, $L(j^{\ast})(X) \in \mathcal{U}''_{1}\cap \mathcal{V}''_{2}\cap \mathcal{W}''$. We claim that there exists $Y\in \mathcal{A}$  such that $L(i_{\ast})(Y)\cong X$ in $\mathrm{Ho}(\mathcal{M}_{\mathcal{C}})$.
Notice that the functor $L(j^{\ast})$ acts by $X\mapsto j^{\ast}P$, where $j^{\ast}P$ is in an admissible exact sequence $ j^{\ast}K\rightarrowtail j^{\ast}P\twoheadrightarrow j^{\ast}X$ in $\mathcal{C}$. Here $P\in\mathcal{U}_{1}$, $K\in \mathcal{V}_{1}=\mathcal{W}\cap \mathcal{V}_{2}$.
So $j^{\ast}P\cong L(j^{\ast})(X) \in \mathcal{U}''_{1}\cap \mathcal{V}''_{2}\cap \mathcal{W}''\subseteq \mathcal{U}''_{2}$.
Now consider the admissible exact sequence
$$ j_{!}j^{\ast}P\rightarrowtail P\overset{\rho}{\twoheadrightarrow} i_{\ast}i^{\ast}P.$$
One has $j_{!}j^{\ast}P\in  j_{!}(\mathcal{U}''_{2}) \subseteq\mathcal{U}_{2}=\mathcal{U}_{1}\cap \mathcal{W}$.  It follows that $\rho$ is a weak equivalence in $\mathcal{M}_{\mathcal{C}}$.
Define $Y:= i^{\ast}(P)$. Since $i^{\ast}(P)\in i^{\ast}(\mathcal{U}_{1})\subseteq \mathcal{U}'_{1}$, we have $L(i_{\ast})(Y)=L(i_{\ast})(i^{\ast}(P))=i_{\ast}i^{\ast}P\overset{\rho^{-1}}{\cong} P\cong X$ in Ho($\mathcal{M}_{\mathcal{C}}$). Hence the desired result follows immediately.

(3) The proof is similar to that of (2), and so we omit it here.
\end{proof}

Now, we are ready to prove the main result of this paper.

{\bf Proof of Theorem \ref{thm:main1}.} By Proposition \ref{pro4.4}, we only need to show that there are natural isomorphisms $L(i_{\ast})\cong R(i_{\ast})$ and $L(j^{\ast})\cong R(j^{\ast})$. Let $\mathcal{V}'_{1}:=\mathcal{W}'\cap\mathcal{V}'_{2}$, $\mathcal{U}'_{2}:=\mathcal{U}'_{1}\cap\mathcal{W}'$ and $\mathcal{V}''_{1}:=\mathcal{W}''\cap\mathcal{V}''_{2}$, $\mathcal{U}''_{2}:=\mathcal{U}''_{1}\cap\mathcal{W}''$. Let $f: X\rightarrow Y$ be a morphism in $\mathrm{Ho}(\mathcal{M}_{\mathcal{C}})$. The functor $L(i_{\ast})$ acts by $f\mapsto \overline{f}$, where $\overline{f}$ is any morphism making the diagram below commute
$$\xymatrix{
   i_{\ast}K_{1}  \ar@{>->}[r]^{} & i_{\ast}P_{1}\ar[d]_{\overline{f}} \ar@{->>}[r]^{j_{1}} & i_{\ast}X \ar[d]_{i_{\ast}f}   \\
   i_{\ast}K_{2} \ar@{>->}[r]^{} & i_{\ast}P_{2} \ar@{->>}[r]^{j_{2}} & i_{\ast}Y.   }$$
Here all rows are admissible exact sequences, $P_{1},P_{2}\in\mathcal{U}'_{1}$, and $K_{1},K_{2}\in \mathcal{W}'\cap \mathcal{V}'_{2}=\mathcal{V}'_{1}$. The functor $R(i_{\ast})$ acts by $f\mapsto \widehat{f}$, where $\widehat{f}$ is any morphism making the next diagram commute
$$\xymatrix{
   i_{\ast}X \ar[d]_{i_{\ast}f} \ar@{>->}[r]^{q_{1}} & i_{\ast}D_{1} \ar[d]_{\widehat{f}} \ar@{->>}[r]^{} & i_{\ast}C_{1}  \\
   i_{\ast}Y \ar@{>->}[r]^{q_{2}} & i_{\ast}D_{2} \ar@{->>}[r]^{} & i_{\ast}C_{2} ,   }$$
where $D_{1},~D_{2}\in\mathcal{V}'_{2}$, $C_{1},~C_{2}\in\mathcal{U}'_{1}\cap \mathcal{W}'=\mathcal{U}'_{2}$. Note that $i_{\ast}(\mathcal{V}'_{1})\subseteq \mathcal{V}_{1}=\mathcal{W}\cap \mathcal{V}_{2}$ and $i_{\ast}(\mathcal{U}'_{2})\subseteq \mathcal{U}_{2}=\mathcal{U}_{1}\cap \mathcal{W}$. Then $j_{1},~j_{2},~q_{1}$ and $q_{2}$ are weak equivalences in $\mathcal{M}_{\mathcal{C}}$. Hence in $\mathrm{Ho}(\mathcal{M}_{\mathcal{C}})$, we have a commutative diagram
$$\xymatrix{
   i_{\ast}P_{1} \ar[d]_{\overline{f}} \ar[r]^{j_{1}} &  i_{\ast}X \ar[d]_{} \ar[r]^{q_{1}} &  i_{\ast}D_{1} \ar[d]^{\widehat{f}} \\
   i_{\ast}P_{2} \ar[r]^{j_{2}} &  i_{\ast}Y \ar[r]^{q_{2}} &  i_{\ast}D_{2} }
$$
giving rise to a natural isomorphism $L(i_{\ast})\cong R(i_{\ast})$.
The proof of the natural isomorphism $L(j^{\ast})\cong R(j^{\ast})$ is similar. This completes the proof.
\hfill$\Box$

\section{Applications to upper triangular matrix rings}\label{application}
Throughout this section, for any ring $R$, all $R$-modules
are understood to be left $R$-modules and $\mathbf{C}(R)$ is the category of chain complexes of $R$-modules. We denote by $\mathrm{Mod}\text{-}R$ the class of $R$-modules.

Let $\Lambda=\left(\begin{smallmatrix}  R & M \\  0 & S \\\end{smallmatrix}\right)$ be an upper triangular matrix ring, where $R$ and $S$ are rings and $_{R}M_{S}$ is an $R$-$S$-bimodule.
If we set $T:= M\otimes_{S}-: \mathrm{Mod}\text{-}S\rightarrow\mathrm{Mod}\text{-}R$, then $T$ induces a functor $T:\mathbf{C}(S)\rightarrow\mathbf{C}(R)$ by $X^{\bullet}\mapsto M\otimes_{S}X^{\bullet}$. Note that in this case, $\mathbf{C}(\Lambda)=(T\downarrow \mathbf{C}(S))$, where $(T\downarrow \mathbf{C}(S))=\{\left(\begin{smallmatrix}  X^{\bullet}  \\   Y^{\bullet} \\\end{smallmatrix}\right)_{\phi}\mid X^{\bullet}\in\mathbf{C}(R),~Y^{\bullet}\in \mathbf{C}(S),~\phi:TY^{\bullet}\rightarrow X^{\bullet}~\mathrm{in}~\mathbf{C}(R)\}$. Therefore, by \cite[Example 2.12]{Psaroudakis}, we obtain the recollement
$$\xymatrix@C=50pt@R=50pt{\ \mathbf{C}(R)\ar[r]^{i_*}&\ \mathbf{C}(\Lambda)\ar@/^1pc/[l]^{i^!}\ar@/_1pc/[l]_{i^*}\ar[r]^{j^*} &\ \mathbf{C}(S),\ar@/^1pc/[l]^{j_*}\ar@/_1pc/[l]_{j_!} }\eqno{(5.1)}$$
where $i^{\ast}$ is given by $\left(\begin{smallmatrix}  X^{\bullet}  \\   Y^{\bullet} \\\end{smallmatrix}\right)_{\phi}\mapsto \mathrm{coker}\phi$; $i_{\ast}$ is given by $X^{\bullet}\mapsto \left(\begin{smallmatrix}  X^{\bullet}  \\   0 \\\end{smallmatrix}\right)$; $i^{!}$ is given by $\left(\begin{smallmatrix}  X^{\bullet}  \\   Y^{\bullet} \\\end{smallmatrix}\right)_{\phi}\mapsto X^{\bullet}$; $j_{!}$ is given by $Y^{\bullet}\mapsto \left(\begin{smallmatrix}  M\otimes _{S}Y^{\bullet}  \\   Y^{\bullet} \\\end{smallmatrix}\right)_{id}$; $j^{\ast}$ is given by $\left(\begin{smallmatrix}  X^{\bullet}  \\   Y^{\bullet} \\\end{smallmatrix}\right)_{\phi}\mapsto Y^{\bullet}$; $j_{\ast}$ is given by $Y^{\bullet}\mapsto \left(\begin{smallmatrix}  0  \\   Y^{\bullet} \\\end{smallmatrix}\right)$.
  Note that the functor $i_{\ast}$, $i^{!}$, $j^{\ast}$, $j_{\ast}$ defined above are exact.

\subsection{The category of chain complexes} For a given class $\mathcal{X}$ of $R$-modules, we have the following
classes of chain complexes in $\mathbf{C}(R)$.

(1) $\widetilde{\mathcal{X}}_{R}$ denotes the class of all exact chain complexes $X$ with cycles $Z_{n}X\in \mathcal{X}$.

(2) $dw\widetilde{\mathcal{X}}_{R}$ denotes the class of all chain complexes $X$ with components $X_{n}\in \mathcal{X}$.

(3) $ex\widetilde{\mathcal{X}}_{R}$ denotes the class of all exact chain complexes $X$ with components $X_{n}\in \mathcal{X}$.

Denote by $\mathrm{Proj}\textrm{-}R$ the class of projective modules. It follows that the projective cotorsion pair $(\mathrm{Proj}\textrm{-}R,\mathrm{Mod}\text{-}R)$ in $\mathrm{Mod}\text{-}R$ can be lifted to a complete hereditary cotorsion pair $(dw \widetilde{\mathcal{P}}_{R},\mathcal{W}_{ctr,R})$ in $\mathbf{C}(R)$. The complexes in $\mathcal{W}_{ctr,R}$ have been called \emph{contraacyclic}. From \cite[Proposition 6.5]{gj16b}, we know that the triple
$\mathcal{M}^{\mathrm{proj}}_{ctr,R} = (dw \widetilde{\mathcal{P}}_{R},\mathcal{W}_{ctr,R},\mathbf{C}(R))$ is a hereditary abelian
model structure on $\mathbf{C}(R)$ and its homotopy category, $\mathrm{Ho}(\mathcal{M}^{\mathrm{proj}}_{ctr,R})$, called as the \emph{contraderived category} over $R$, is equivalent to $\mathbf{K}(\mathrm{Proj}\textrm{-}R)$, where $\mathbf{K}(\mathrm{Proj}\textrm{-}R)$
 is the chain homotopy category of all complexes of projective modules.

Now we have the following result.

\begin{cor}\label{corollary:5.5}Let $\Lambda=\left(\begin{smallmatrix}  R & M \\  0 & S \\\end{smallmatrix}\right)$ be an upper triangular matrix ring. If $\mathrm{pd}_{R}M < \infty$ and $\mathrm{pd}M_{S} < \infty$, then we have the following recollements of triangulated categories:
$$
\xymatrix@C=50pt@R=65pt{ \mathrm{Ho}(\mathcal{M}^{\mathrm{proj}}_{ctr,R})\ar[d]^{F_{R}}\ar[r]^{i_*}&\ \mathrm{Ho}(\mathcal{M}^{\mathrm{proj}}_{ctr,\Lambda})\ar[d]^{F_{\Lambda}}\ar@/_2pc/[l]_{i^*Q_{\Lambda}}\ar@/^2pc/[l]^{i^{!}}
\ar[r]^{j^{\ast}} &\ \mathrm{Ho}(\mathcal{M}^{\mathrm{proj}}_{ctr,S})\ar[d]^{F_{S}}\ar@/_2pc/[l]_{j_!Q_{S}}\ar@/^2pc/[l]^{j_\ast}  \\
\ \mathbf{K}(\mathrm{Proj}\textrm{-}R)\ar[r]^{F_{\Lambda}i_*F_{R}^{-1}}&\ \mathbf{K}(\mathrm{Proj}\textrm{-}\Lambda)\ar@/_2pc/[l]_{F_{R}i^*Q_{\Lambda}F_{\Lambda}^{-1}}\ar@/^2pc/[l]^{F_{R}i^{!}
F^{-1}_{\Lambda}}
\ar[r]^{F_{S}j^{\ast}F_{\Lambda}^{-1}} &\ \mathbf{K}(\mathrm{Proj}\textrm{-}S)\ar@/_2pc/[l]_{F_{\Lambda}j_!Q_{S}F_{S}^{-1}}\ar@/^2pc/[l]^{F_{\Lambda}j_\ast F_{S}^{-1}}
\\
\ \mathbf{K}^{b}(\mathrm{Proj}\textrm{-}R)\ar@{^{(}->}[u]^{}\ar[r]^{\mathbf{D}^{b}(i_*)}&\ \mathbf{K}^{b}(\mathrm{Proj}\textrm{-}\Lambda)\ar@{^{(}->}[u]^{}\ar@/_2pc/[l]_{\mathbf{L}^{b}(i^*)}\ar@/^2pc/[l]^{\mathbf{D}^{b}(i^{!})}
\ar[r]^{\mathbf{D}^{b}(j^{\ast})} &\ \mathbf{K}^{b}(\mathrm{Proj}\textrm{-}S).\ar@{^(->}[u]^{}\ar@/_2pc/[l]_{\mathbf{L}^{b}(j_!)}\ar@/^2pc/[l]^{\mathbf{D}^{b}(j_\ast)}}\eqno{(5.2)}
$$
Here, the notation such as $Q_{\Lambda}$ means to take a special $dw \widetilde{\mathcal{P}}_{\Lambda}$-precover. The functors such as $F_{R}$ is the triangulate equivalence $\mathrm{Ho}(\mathcal{M}^{\mathrm{proj}}_{ctr,R})\rightarrow\mathbf{K}(\mathrm{Proj}\textrm{-}R)$, and $F^{-1}_{R}$ is the inverse of $F_{R}$. The notation such as $\mathbf{L}^{b}(i^*)$, $\mathbf{D}^{b}(i_*)$, $\mathbf{D}^{b}(i^{!})$, $\mathbf{L}^{b}(j_!)$, $\mathbf{D}^{b}(j_\ast)$, $\mathbf{D}^{b}(j^{\ast})$ are the derived functors of those in (5.1).
\end{cor}
\begin{proof}If $\mathrm{pd}_{R}M < \infty$ and $\mathrm{pd}M_{S} < \infty$, then by \cite{lu}, we have the recollement in the last row. By \cite[Proposition 6.5]{gj16b}, it suffices to construct the recollement (5.2). From \cite[Proposition 6.5]{gj16b}, we know that the triple
$\mathcal{M}^{\mathrm{proj}}_{ctr,R} = (dw \widetilde{\mathcal{P}}_{R},\mathcal{W}_{ctr,R},\mathbf{C}(R))$  and $\mathcal{M}^{\mathrm{proj}}_{ctr,S} = (dw \widetilde{\mathcal{P}}_{S},\mathcal{W}_{ctr,S},\mathbf{C}(S))$ are hereditary abelian
model structures on $\mathbf{C}(R)$ and $\mathbf{C}(S)$, respectively.
Note that the Hovey triple $(dw \widetilde{\mathcal{P}}_{R},\mathcal{W}_{ctr,R},\mathbf{C}(R))$ induces two complete cotorsion pairs $(dw \widetilde{\mathcal{P}}_{R},\mathcal{W}_{ctr,R})$ and
$(\widetilde{\mathcal{P}}_{R},\mathbf{C}(R))$. Similarly, we have  two complete cotorsion pairs $(dw \widetilde{\mathcal{P}}_{S},\mathcal{W}_{ctr,S})$ and
$(\widetilde{\mathcal{P}}_{S},\mathbf{C}(S))$ in $\mathbf{C}(S)$. Applying Theorem \ref{thm:1.1}, we obtain two cotorsion pairs
$(\mathcal{U}_{1},\mathcal{V}_{1})$ and $(\mathcal{U}_{2},\mathcal{V}_{2})$ in  $\mathbf{C}(\Lambda)$, where
\begin{align*}
\mathcal{U}_{1}&=\{\left(\begin{smallmatrix}  X^{\bullet}  \\   Y^{\bullet} \\\end{smallmatrix}\right)_{\phi}\in{\mathbf{C}(\Lambda)} \ \mid \ \mathrm{coker}\phi\in{dw \widetilde{\mathcal{P}}_{R}}, Y^{\bullet}\in{dw \widetilde{\mathcal{P}}_{S}}, \ M\otimes _{S}Y^{\bullet}  \to X^{\bullet} \ \textrm{is} \ \textrm{a} \ \textrm{monomorphism}\};\\
\mathcal{V}_{1}&=\{\left(\begin{smallmatrix}  X^{\bullet}  \\   Y^{\bullet} \\\end{smallmatrix}\right)_{\phi}\in{\mathbf{C}(\Lambda)} \ \mid \ X^{\bullet}\in{\mathcal{W}_{ctr,R}}, Y^{\bullet}\in{\mathcal{W}_{ctr,S}}\};\\
\mathcal{U}_{2}&=\{\left(\begin{smallmatrix}  X^{\bullet}  \\   Y^{\bullet} \\\end{smallmatrix}\right)_{\phi}\in{\mathbf{C}(\Lambda)} \ \mid \ \mathrm{coker}\phi\in{\widetilde{\mathcal{P}}_{R}},  Y^{\bullet}\in{\widetilde{\mathcal{P}}_{S}}, \ M\otimes _{S}Y^{\bullet}  \to X^{\bullet} \ \textrm{is} \ \textrm{a} \ \textrm{monomorphism}\};\\
\mathcal{V}_{2}&={\mathbf{C}(\Lambda)}.
\end{align*}
From \cite[Theorem 3.1]{ah00} we know that a $\Lambda$-module $X=\left(\begin{smallmatrix}  X  \\   Y \\\end{smallmatrix}\right)_{\phi}$ is projective if and only if $Y$ is projective in $S$-$\mathrm{Mod}$, $\mathrm{coker}\phi$ is projective in $R$-$\mathrm{Mod}$ and $\phi: M\otimes _{S}Y  \to X$ is monic. Therefore, one can show $\mathcal{U}_{1}=dw \widetilde{\mathcal{P}}_{\Lambda}$. Moreover, since $(\mathcal{U}_{1},\mathcal{V}_{1})$ and $(\mathcal{U}_{2},\mathcal{V}_{2})$ are  cotorsion pairs, we have $\mathcal{V}_{1}=\mathcal{W}_{ctr,\Lambda}$ and $\mathcal{U}_{2}=\widetilde{\mathcal{P}}_{\Lambda}$. Hence
$\mathcal{U}_{1}\cap\mathcal{V}_{1}=\mathcal{U}_{2}\cap\mathcal{V}_{2}=\widetilde{\mathcal{P}}_{\Lambda}$.
 Thus Theorem \ref{thm:main1} yields the desired recollement. Note that in each model structures above, all objects are fibrant. Therefore, we have ${L}(i^{\ast})=i^{\ast}Q_{\Lambda}$, ${R}(i_{\ast})=i_{\ast}$, ${R}(i^{!})=i^{!}$, ${L}(j_{!})=j_!Q_{S}$, ${R}(j^{\ast})=j^{\ast}$ and ${R}(j_{\ast})=j_{\ast}$. Here, the notation such as $Q_{\Lambda}$ means to take a special $dw \widetilde{\mathcal{P}}_{\Lambda}$-precover. One can check that all diagrams are commutative.
\end{proof}

It is shown in \cite[Proposition 2.2.1(1)]{be14} that the projective cotorsion pair $(\mathrm{Proj}\textrm{-}R,\mathrm{Mod}\text{-}R)$ in $\mathrm{Mod}\text{-}R$ can be lifted to a complete hereditary cotorsion pair $(ex \widetilde{\mathcal{P}}_{R},\mathcal{V}_{prj,R})$ in $\mathbf{C}(R)$. By \cite[Proposition 7.3]{gj16b}, there is a hereditary abelian model structure $\mathcal{M}^{\mathrm{proj}}_{stb,R} = (ex \widetilde{\mathcal{P}}_{R},\mathcal{V}_{prj,R},\mathbf{C}(R))$ on $\mathbf{C}(R)$, which is called as the \emph{exact Proj model structure} on $\mathbf{C}(R)$. Moreover, its homotopy category
$\mathrm{Ho}(\mathcal{M}^{\mathrm{proj}}_{stb,R})$, called as the \emph{projective stable derived category} over $R$, is equivalent to $\mathbf{K}_{ex}(\mathrm{Proj}\textrm{-}R)$, where $\mathbf{K}_{ex}(\mathrm{Proj}\textrm{-}R)$
is the chain homotopy category of all exact complexes of projective modules.
Thus we have the following result.

\begin{cor}\label{corollary:5.6} Let $\Lambda=\left(\begin{smallmatrix}  R & M \\  0 & S \\\end{smallmatrix}\right)$ be an upper triangular matrix ring. If $M$ has finite flat dimension as a right $S$-module, then we have the following recollements of triangulated categories:
$$
\xymatrix@C=50pt@R=65pt{ \mathrm{Ho}(\mathcal{M}^{\mathrm{proj}}_{stb,R})\ar[d]^{F_{R}}\ar[r]^{i_*}&\ \mathrm{Ho}(\mathcal{M}^{\mathrm{proj}}_{stb,\Lambda})\ar[d]^{F_{\Lambda}}\ar@/_2pc/[l]_{i^*Q_{\Lambda}}\ar@/^2pc/[l]^{i^{!}}
\ar[r]^{j^{\ast}} &\ \mathrm{Ho}(\mathcal{M}^{\mathrm{proj}}_{stb,S})\ar[d]^{F_{S}}\ar@/_2pc/[l]_{j_!Q_{S}}\ar@/^2pc/[l]^{j_\ast}  \\ \mathbf{K}_{ex}(\mathrm{Proj}\textrm{-}R)\ar[r]^{F_{\Lambda}i_*F_{R}^{-1}}&\ \mathbf{K}_{ex}(\mathrm{Proj}\textrm{-}\Lambda)\ar@/_2pc/[l]_{F_{R}i^*Q_{\Lambda}F_{\Lambda}^{-1}}\ar@/^2pc/[l]^{F_{R}i^{!}
F^{-1}_{\Lambda}}
\ar[r]^{F_{S}j^{\ast}F_{\Lambda}^{-1}} &\ \mathbf{K}_{ex}(\mathrm{Proj}\textrm{-}S)\ar@/_2pc/[l]_{F_{\Lambda}j_!Q_{S}F_{S}^{-1}}\ar@/^2pc/[l]^{F_{\Lambda}j_\ast F_{S}^{-1}}.}\eqno{(5.3)}$$
Here, the notation such as $Q_{\Lambda}$ means to take a special $ex \widetilde{\mathcal{P}}_{\Lambda}$-precover. The functors such as $F_{R}$ is the triangulate equivalence $\mathrm{Ho}(\mathcal{M}^{\mathrm{proj}}_{stb,R})\rightarrow\mathbf{K}_{ex}(\mathrm{Proj}\textrm{-}R)$, and $F^{-1}_{R}$ is the inverse of $F_{R}$.
\end{cor}

\begin{proof} It suffices to construct the recollement (5.3) by \cite[Proposition 6.5]{gj16b}.
From \cite[Proposition 6.5]{gj16b} we know that the triple
$\mathcal{M}^{\mathrm{proj}}_{stb,R} = (ex \widetilde{\mathcal{P}}_{R},\mathcal{V}_{prj,R},\mathbf{C}(R))$  and $\mathcal{M}^{\mathrm{proj}}_{stb,S} = (ex \widetilde{\mathcal{P}}_{S},\mathcal{V}_{prj,S},\mathbf{C}(S))$ are hereditary abelian
model structures on $\mathbf{C}(R)$ and $\mathbf{C}(S)$, respectively.
Note that the Hovey triple $(ex \widetilde{\mathcal{P}}_{R},\mathcal{V}_{prj,R},\mathbf{C}(R))$ induces two complete cotorsion pairs $(ex \widetilde{\mathcal{P}}_{R},\mathcal{V}_{prj,R})$ and
$(\widetilde{\mathcal{P}}_{R},\mathbf{C}(R))$. Similarly, we have  two complete cotorsion pairs $(ex \widetilde{\mathcal{P}}_{S},\mathcal{V}_{prj,S})$ and
$(\widetilde{\mathcal{P}}_{S},\mathbf{C}(S))$ in $\mathbf{C}(S)$. Applying Theorem \ref{thm:1.1}, we obtain two cotorsion pairs
$(\mathcal{U}_{1},\mathcal{V}_{1})$ and $(\mathcal{U}_{2},\mathcal{V}_{2})$ in  $\mathbf{C}(\Lambda)$, where
\begin{align*}
\mathcal{U}_{1}=&\{\left(\begin{smallmatrix}  X^{\bullet}  \\   Y^{\bullet} \\\end{smallmatrix}\right)_{\phi}\in{\mathbf{C}(\Lambda)} \ \mid \ \mathrm{coker}\phi\in{ex \widetilde{\mathcal{P}}_{R}}, Y^{\bullet}\in{ex \widetilde{\mathcal{P}}_{S}}, \ M\otimes _{S}Y^{\bullet}  \to X^{\bullet} \ \textrm{is} \ \textrm{a} \ \textrm{monomorphism}\};\\
\mathcal{V}_{1}=&\{\left(\begin{smallmatrix}  X^{\bullet}  \\   Y^{\bullet} \\\end{smallmatrix}\right)_{\phi}\in{\mathbf{C}(\Lambda)} \ \mid \ X^{\bullet}\in{\mathcal{V}_{prj,R}}, Y^{\bullet}\in{\mathcal{V}_{prj,S}}\};\\
\mathcal{U}_{2}=&\{\left(\begin{smallmatrix}  X^{\bullet}  \\   Y^{\bullet} \\\end{smallmatrix}\right)_{\phi}\in{\mathbf{C}(\Lambda)} \ \mid \ \mathrm{coker}\phi\in{\widetilde{\mathcal{P}}_{R}},  Y^{\bullet}\in{\widetilde{\mathcal{P}}_{S}}, \ M\otimes _{S}Y^{\bullet}  \to X^{\bullet} \ \textrm{is} \ \textrm{a} \ \textrm{monomorphism}\};\\
\mathcal{V}_{2}=&{\mathbf{C}(\Lambda)}.
\end{align*}
Consider the exact sequence of complexes $0\rightarrow M\otimes _{S}Y^{\bullet} \rightarrow X^{\bullet}\rightarrow \mathrm{coker}\phi\rightarrow 0$. Since $M$ has finite flat dimension as a right $S$-module by hypothesis, $X^{\bullet}$ is an exact complex. This means that
$\mathcal{U}_{1}=ex \widetilde{\mathcal{P}}_{\Lambda}$. Moreover, since $(\mathcal{U}_{1},\mathcal{V}_{1})$ and $(\mathcal{U}_{2},\mathcal{V}_{2})$ are  cotorsion pairs, we have $\mathcal{V}_{1}=\mathcal{V}_{prj,\Lambda}$ and $\mathcal{U}_{2}=\widetilde{\mathcal{P}}_{\Lambda}$. Hence
$\mathcal{U}_{1}\cap\mathcal{V}_{1}=\mathcal{U}_{2}\cap\mathcal{V}_{2}=\widetilde{\mathcal{P}}_{\Lambda}$.
 Thus Theorem \ref{thm:main1} yields the desired recollement. Note that in each model structures above, all objects are fibrant. Therefore, we have ${L}(i^{\ast})=i^{\ast}Q_{\Lambda}$, ${R}(i_{\ast})=i_{\ast}$, ${R}(i^{!})=i^{!}$, ${L}(j_{!})=j_!Q_{S}$, ${R}(j^{\ast})=j^{\ast}$ and ${R}(j_{\ast})=j_{\ast}$. Here, the notation such as $Q_{\Lambda}$ means to take a special $ex \widetilde{\mathcal{P}}_{\Lambda}$-precover. One can check that all diagrams are commutate.
\end{proof}
\subsection{The category of Gorenstein injective modules}
Let $\Lambda=\left(\begin{smallmatrix}  R & M \\  0 & S \\\end{smallmatrix}\right)$ be an upper triangular matrix ring, where $R$ and $S$ are rings and $_{R}M_{S}$ is an $R$-$S$-bimodule. Recall
that the \emph{monomorphism category} $\mathrm{Mon}(\Lambda)$ induced by the bimodule $_{R}M_{S}$ is the subcategory of Mod-$\Lambda$ consisting of $\left(\begin{smallmatrix}  X  \\  Y \\\end{smallmatrix}\right)_{\phi}$ such that $\phi:M\otimes_{S}Y\rightarrow X$ is a monomorphism in Mod-$R$.
For any  class $\mathcal{C}$ of $R$-modules and any class $\mathcal{D}$ of $S$-modules, let $\mathrm{Rep}(\mathcal{C},\mathcal{D})$ denote the following class of $\Lambda$-modules: $$\mathrm{Rep}(\mathcal{C},\mathcal{D})=\{N=\left(\begin{smallmatrix} X  \\  Y \\\end{smallmatrix}\right)_{\phi}\in\mathrm{Mod}\textrm{-}\Lambda\mid X \in\mathcal{C},~Y\in\mathcal{D} \}.$$

For any ring $R$, an $R$-module $X$ is \emph{Gorenstein injective} if there exists an exact
complex of injective $R$-modules $E^{\bullet} = \cdots E^{-1} \rightarrow E^{0} \rightarrow E^{1} \rightarrow\cdots$ such that for
any injective module $I$, the complex $\textrm{Hom}_{R}$$(I,E^{\bullet})$ is still exact, and such that
$X \cong {\rm ker}(E^{0} \rightarrow E^{1})$. Such an exact complex of injective modules $E^{\bullet}$ is called a \emph{totally acyclic complex of injective modules}. For any ring $R$, we denote by $\mathcal{GI}(R)$ (resp., $\mathcal{I}(R)$) the class of Gorenstein injective (resp., injective) $R$-modules and by $\mathcal{GI}(\mathrm{Mon}(\Lambda))$  the class of Gorenstein injective objects in $\mathrm{Mon}(\Lambda)$. Recently, \v{S}aroch and $\check{\rm{S}}$t'ov\'{\i}$\check{\rm{c}}$ek \cite{JJarxiv} have proved that $(^{\perp}\mathcal{GI}(R),\mathcal{GI}(R))$ is a complete hereditary cotorsion pair for any ring $R$.

The following result give a characterization of injective objects in $\mathrm{Mon}(\Lambda)$.

\begin{lem}\label{lem inj} Let $\Lambda=\left(\begin{smallmatrix}  R & M \\  0 & S \\\end{smallmatrix}\right)$ be an upper  triangular matrix ring. If $M_{S}$ is flat, then $X=\ccc{X_{1}}{X_{2}}{\phi}$ is an injective object in $\mathrm{Mon}(\Lambda)$ if and only if $X\in\mathrm{Rep}(\mathcal{I}(R),\mathcal{I}(S))$ and $\phi$ is monic.
\end{lem}
\begin{proof}The ``if" statement follows from the proof of \cite[Proposition 2.2(2)]{xz18}.
For the ``only if" statement, suppose that $X=\ccc{X_{1}}{X_{2}}{\phi}$ is an injective object in $\mathrm{Mon}(\Lambda)$.
 We have $\Ext_{R}^{1}(U,X_{1})\cong \Ext_{\Lambda}^{1}(\ccc{U}{0}{},\ccc{X_{1}}{X_{2}}{\phi})=0$ for every $R$-module $U$. Therefore, $X_{1}$ is an injective $R$-module. Since $M_{S}$ is flat,  by \cite[Lemma 3.2(5)]{zhu20}, we have $\Ext_{S}^{1}(V,X_{2})\cong \Ext_{\mathrm{Mon}(\Lambda)}^{1}(\ccc{M\oo_{S}V}{V}{},\ccc{X_{1}}{X_{2}}{\phi})=0$ for any $S$-module $V$. It follows that $X_{2}$ is an injective $S$-module.
\end{proof}
 A bimodule $_{R}M_{S}$  is called \emph{cocompatible} if the following three conditions hold:

$(C1)$ $M_{S}$ is flat;

$(C2)$ $M\otimes_{S}Y\in\mathcal{I}(R)$ for each injective right $S$-module $Y$;

$(C3)$ the functor $M\otimes_{S}-$ preserves totally acyclic complex of injective modules.

\begin{ex}
(1) If $R=S$ and $M=\underset{finite}{\underbrace{R \oplus\cdots\oplus R}}$, then $_{R}M_{R}$ is a cocompatible bimodule.

(2) For an ring $R$, let $S:=R \oplus\cdots\oplus R$, and $_{R}M_{S}:={_{R}S_{S}}$. Then $_{R}M_{S}$ is a cocompatible bimodule.

(3) For an ring $S$, let $R:=\underset{finite}{\underbrace{ S \oplus\cdots\oplus S}}$, and $_{R}M_{S}:={_{R}R_{S}}$. Then $_{R}M_{S}$ is a cocompatible bimodule.

(4) Suppose that $R$ and $S$ are quasi-Frobenius rings. If  $_{R}M_{S}$ is a bimodule with $_{R}M$ injective and $M_{S}$  finitely generated projective, then $_{R}M_{S}$ is a cocompatible bimodule.
In fact, any injective left $S$-injective module is an direct summand of $\mathrm{Hom}_{\mathbb{Z}}(S,\mathbb{Q/Z})^{I}$ for some set $I$. To show $(C2)$, it is only need  to show that $M\otimes_{S}\mathrm{Hom}_{\mathbb{Z}}(S,\mathbb{Q/Z})^{I}$ is injective.
Since $M_{S}$ is finitely presented, we have an isomorphism
$M\otimes_{S}\mathrm{Hom}_{\mathbb{Z}}(S,\mathbb{Q/Z})^{I}\cong\mathrm{Hom}_{\mathbb{Z}}(\mathrm{Hom}_{S}(M,S),\mathbb{Q/Z})^{I}.$
By the fact that $_{R}M$ is  injective  and $S_{S}$ is injective, we obtain that $\mathrm{Hom}_{S}(M,S)$ is a flat right $R$-module. Therefore, $\mathrm{Hom}_{\mathbb{Z}}(\mathrm{Hom}_{S}(M,S),\mathbb{Q/Z})^{I}$ is injective. $(C3)$ holds since each injective left $R$-module is projective.
\end{ex}
\begin{lem}\label{lem Ginj} Let $\Lambda=\left(\begin{smallmatrix}  R & M \\  0 & S \\\end{smallmatrix}\right)$ be an upper  triangular matrix ring and $X=\ccc{X_{1}}{X_{2}}{\phi^{X}}$ a $\Lambda$-module. If $_{R}M_{S}$ is cocompatible, then $X\in\mathcal{GI}(\mathrm{Mon}(\Lambda))$ if and only if $X\in\mathrm{Rep}(\mathcal{GI}(R),\mathcal{GI}(S))$ and $\phi^{X}$ is monic.
\end{lem}
\begin{proof}``$\Rightarrow$". If $X=\ccc{X_{1}}{X_{2}}{\phi^{X}}\in\mathcal{GI}(\mathrm{Mon}(\Lambda))$, then there exists an exact sequence of injective objects in $\mathrm{Mon}(\Lambda)$:
$~E^{\bullet}=\ccc{E^{\bullet}_{1}}{E^{\bullet}_{2}}{}:~\cdots \longrightarrow
\ccc{E^{-1}_{1}}{E^{-1}_{2}}{\phi^{-1}}\stackrel{\binom{\partial^{-1}_{1}}{\partial^{-1}_{2}}{}}
\longrightarrow\ccc{E^{0}_{1}}{E^{0}_{2}}{\phi^{0}}\stackrel{\binom{\partial^{0}_{1}}{\partial^{0}_{2}}}\longrightarrow
\ccc{E^{1}_{1}}{E^{1}_{2}}{\phi^{1}}\stackrel{}\longrightarrow
\cdots$
with $\ccc{ X_{1}}{ X_{2}}{\phi^{X}}\cong\mathrm{ker}\binom{\partial^{0}_{1}}{\partial^{0}_{2}}$.
By Lemma \ref{lem inj}, $E^{i}_{1}$ and $E^{i}_{2}$ are injective modules for each $i\in \mathbb{Z}$. So we have  an exact sequence of injective $R$-modules
$~E^{\bullet}_{1}$
such that ker$\partial^{0}_{1}\cong X_{1}$. We know from Lemma \ref{lem inj} that $\ccc{I}{0}{}$ is injective in $\mathrm{Mon}(\Lambda)$ for each injective $R$-module $I$, so the complex $\Hom_{R}(I,E^{\bullet}_{1})\cong\Hom_{\mathrm{Mon}(\Lambda)}(\ccc{I}{0}{},\ccc{E^{\bullet}_{1}}{E^{\bullet}_{2}}{})$
is exact. It follows that $X_{1}\in\mathcal{GI}(R)$.
Similarly, we have an exact sequence of injective $S$-modules
$~E^{\bullet}_{2}$
such that ker$\partial^{0}_{2}\cong X_{2}$.
Let $E$ be an injective $S$-module. Since $_{R}M_{S}$ is cocompatible, $M\otimes_{S}E$ is an injective $R$-module.  Applying Lemma \ref{lem inj} again, we get that $\ccc{M\otimes_{S}E}{E}{}$ is injective in $\mathrm{Mon}(\Lambda)$, so the complex $\Hom_{R}(E,E^{\bullet}_{2})\cong\Hom_{\mathrm{Mon}(\Lambda)}(\ccc{M\otimes_{S}E}{E}{},\ccc{E^{\bullet}_{1}}{E^{\bullet}_{2}}{})$
is exact. It follows that $X_{2}\in\mathcal{GI}(S)$.

``$\Leftarrow$". Consider the exact sequence
$0\rightarrow\ccc{M\oo_{S}X_{2}}{X_{2}}{}\rightarrow\ccc{X_{1}}{X_{2}}{}\rightarrow\ccc{\mathrm{coker}\phi^{X}}{0}{}\rightarrow0.
$
Since the class of Gorenstein injective objects is closed under extensions, it only need to show
that $\ccc{M\oo_{S}X_{2}}{X_{2}}{}$ and $\ccc{\mathrm{coker}\phi^{X}}{0}{}$ are belong to $\mathcal{GI}(\mathrm{Mon}(\Lambda))$.
 Let $\ccc{I_{1}}{I_{2}}{\phi^{I}}$ be an injective object in $\mathrm{Mon}(\Lambda)$. Since $_{R}M_{S}$ satisfies  $(C2)$, $M\otimes_{S}I_{2}$ is an injective $R$-module. So the exact sequence $0\rightarrow M\otimes_{S}I_{2}\rightarrow I_{1}\rightarrow \mathrm{coker}\phi^{I}\rightarrow0$ splits. It follows that $\mathrm{coker}\phi^{I}$ is an injective $R$-module and we have $\ccc{I_{1}}{I_{2}}{\phi^{I}}\cong \ccc{M\oo_{S}I_{2}}{I_{2}}{}\oplus\ccc{\mathrm{coker}\phi^{I}}{0}{}$.
Since $X_{2}$ is Gorenstein injective, there exists a totally acyclic complex of injective modules $E^{\bullet}_{2}$.
Therefore, by condition $(C3)$, the complex $\Hom_{\mathrm{Mon}(\Lambda)}(\ccc{\mathrm{coker}\phi^{I}}{0}{},\ccc{M\oo_{S}E^{\bullet}_{2}}{E^{\bullet}_{2}}{})
\cong\Hom_{R}(\mathrm{coker}\phi^{I},M\oo_{S}E^{\bullet}_{2})$ is exact.
On the other hand, by the fact that the functor  $Y\mapsto \ccc{M\oo_{S}Y}{Y}{Id}$ is fully faithful, the complex $\Hom_{\mathrm{Mon}(\Lambda)}(\ccc{M\oo_{S}I_{2}}{I_{2}}{},\ccc{M\oo_{S}E^{\bullet}_{2}}{E^{\bullet}_{2}}{})
\cong\Hom_{S}(I_{2},E^{\bullet}_{2})$ is exact. Together, we showed that for  each injective object $\ccc{I_{1}}{I_{2}}{\phi^{I}}$ in $\mathrm{Mon}(\Lambda)$, $\Hom_{\mathrm{Mon}(\Lambda)}(\ccc{I_{1}}{I_{2}}{\phi^{I}},\ccc{M\oo_{S}E^{\bullet}_{2}}{E^{\bullet}_{2}}{})
$ is exact. Now, we have $\ccc{M\oo_{S}X_{2}}{X_{2}}{}\cong\mathrm{ker}\ccc{\partial ^{0}_{M\oo_{S}E^{\bullet}_{2}}}{\partial^{0}_{E^{\bullet}_{2}}}{}$, which means  that $\ccc{M\oo_{S}X_{2}}{X_{2}}{}\in\mathcal{GI}(\mathrm{Mon}(\Lambda))$.

It suffices to show  $\ccc{\mathrm{coker}\phi^{X}}{0}{}\in\mathcal{GI}(\mathrm{Mon}(\Lambda))$. Consider the exact sequence $M\oo_{S}E^{\bullet}_{2}$, we have  $M\oo_{S}X_{2}\cong \mathrm{ker}\partial^{0}_{M\oo_{S}E^{\bullet}_{2}}$. So $M\oo_{S}X_{2}\in\mathcal{GI}(R)$.
 Since $\mathcal{GI}(R)$ is coresolving, $\mathrm{coker}\phi^{X}\in\mathcal{GI}(R)$. Then
 there exists a totally acyclic complex of injective modules $C^{\bullet}$ such that  $\mathrm{coker}\phi^{X}\cong \mathrm{ker}\partial^{0}_{C^{\bullet}}$. It follows that $\ccc{\mathrm{coker}\phi^{X}}{0}{}\cong\mathrm{ker}\ccc{\partial^{0}_{\mathbf{C}}}{0}{}$.
   Note that for each injective object $\ccc{I_{1}}{I_{2}}{\phi^{I}}$ in $\mathrm{Mon}(\Lambda)$, we have isomorphisms $\Hom_{\mathrm{Mon}(\Lambda)}(\ccc{M\oo_{S}I_{2}}{I_{2}}{},\ccc{C^{\bullet}}{0}{})=
   \Hom_{\mathrm{Mon}(\Lambda)}(j_{!}(I_{2}),\ccc{C^{\bullet}}{0}{})
\cong\Hom_{S}(I_{2},j^{\ast}(\ccc{C^{\bullet}}{0}{}))=0$ and
   $\Hom_{\mathrm{Mon}(\Lambda)}(\ccc{\mathrm{coker}\phi^{I}}{0}{},\ccc{C^{\bullet}}{0}{})
\cong\Hom_{R}(\mathrm{coker}\phi^{I},C^{\bullet})$.
   Therefore, the complex $\Hom_{\mathrm{Mon}(\Lambda)}(\ccc{I_{1}}{I_{2}}{\phi^{I}},\ccc{C^{\bullet}}{0}{})
 \cong\Hom_{\mathrm{Mon}(\Lambda)}(\ccc{M\oo_{S}I_{2}}{I_{2}}{}\oplus\ccc{\mathrm{coker}\phi^{I}}{0}{},\ccc{C^{\bullet}}{0}{})$ is exact. It follows that $\ccc{\mathrm{coker}\phi^{X}}{0}{}\in\mathcal{GI}(\mathrm{Mon}(\Lambda))$, as desired.
\end{proof}

\begin{cor}\label{corollary:5.8} Let $\Lambda=\left(\begin{smallmatrix}  R & M \\  0 & S \\\end{smallmatrix}\right)$ be an upper  triangular matrix ring.  Denote by $\mathcal{I}(\mathrm{Mon}(\Lambda))$  the class of injective objects in $\mathrm{Mon}(\Lambda)$.
If $_{R}M_{S}$ is cocompatible,  then we have the following recollement of triangulated categories:
$$\xymatrix@C=50pt@R=50pt{\ \displaystyle\frac{\mathcal{GI}(R)}{\mathcal{I}(R)}\ar[r]^{i_*}&\ \displaystyle\frac{\mathcal{GI}(\mathrm{Mon}(\Lambda))}{\mathcal{I}(\mathrm{Mon}(\Lambda))}\ar@/_2pc/[l]_{i^*}\ar@/^2pc/[l]^{i^{!}R_{\Lambda}}
\ar[r]^{j^{\ast}} &\ \displaystyle\frac{\mathcal{GI}(S)}{\mathcal{I}(S)}\ar@/_2pc/[l]_{j_!}\ar@/^2pc/[l]^{j_\ast R_{S}},}$$
Here, the notation such as $R_{S}$ means to take a special $\mathcal{GI}(S)$-preenvelope.

\end{cor}
\begin{proof}
Note that $^{\perp}\mathcal{GI}(R)\cap{\mathcal{GI}(R)}=\mathcal{I}(R)$ by \cite[Theorem 5.2]{gj16}. Therefore, by Lemma \ref{lem4.2}, there exists a category $\mathcal{W}(R)$ of modules, such that $(R\text{-}\mathrm{Mod},\mathcal{W}(R),\mathcal{GI}(R))$ forms a hereditary abelian model structure.
Let $\mathcal{M}_{R}=(\mathrm{Mod}\text{-}R,\mathcal{W}(R),\mathcal{GI}(R))$ and $\mathcal{M}_{S}=(\mathrm{Mod}\text{-}S,\mathcal{W}(S),\mathcal{GI}(S))$ be abelian model structures on $\mathrm{Mod}\text{-}R$ and $\mathrm{Mod}\text{-}S$, respectively.
 Since $M_{S}$ is flat, applying Theorem \ref{thm:1.1}, we obtain two cotorsion pairs
$(\mathcal{U}_{1},\mathcal{V}_{1})$ and $(\mathcal{U}_{2},\mathcal{V}_{2})$ in  $\mathrm{Mod}\text{-}\Lambda$, where
\begin{align*}
\mathcal{U}_{1}=&\{\left(\begin{smallmatrix}  X  \\   Y \\\end{smallmatrix}\right)_{\phi}\in{\mathrm{Mod}\text{-}\Lambda} \ \mid \ \phi  \ \textrm{is} \ \textrm{a} \ \textrm{monomorphism}\}=\mathrm{Mon}(\Lambda);\\
\mathcal{V}_{1}=&\{\left(\begin{smallmatrix}  X  \\   Y \\\end{smallmatrix}\right)_{\phi}\in{\mathrm{Mod}\text{-}\Lambda} \ \mid \ \ X\in{\mathcal{I}(R)}, Y\in{\mathcal{I}(S)}\}= \mathrm{Rep}(\mathcal{I}(R),\mathcal{I}(S));\\
\mathcal{U}_{2}=&\{\left(\begin{smallmatrix}  X  \\   Y \\\end{smallmatrix}\right)_{\phi}\in{\mathrm{Mod}\text{-}\Lambda} \ \mid \ \mathrm{coker}\phi\in{\mathcal{W}(R)}, Y\in{\mathcal{W}(S)}, \phi \ \textrm{is} \ \textrm{a} \ \textrm{monomorphism}\};\\
\mathcal{V}_{2}=&\{\left(\begin{smallmatrix}  X  \\   Y \\\end{smallmatrix}\right)_{\phi}\in{\mathrm{Mod}\text{-}\Lambda} \ \mid \ X\in{\mathcal{GI}(R)}, Y\in{\mathcal{GI}(S)}\}=\mathrm{Rep}(\mathcal{GI}(R),\mathcal{GI}(S)).
\end{align*}
In the following, we claim that $\mathcal{U}_{1}\cap\mathcal{V}_{1}=\mathcal{U}_{2}\cap\mathcal{V}_{2}=\mathcal{I}(\mathrm{Mon}(\Lambda))$. Let $\left(\begin{smallmatrix}  X  \\   Y \\\end{smallmatrix}\right)_{\phi}\in\mathcal{U}_{2}\cap\mathcal{V}_{2}$. We show that $\left(\begin{smallmatrix}  X  \\   Y \\\end{smallmatrix}\right)_{\phi}\in\mathcal{U}_{1}\cap\mathcal{V}_{1}$. Note that
$\mathcal{W}(R)\cap{\mathcal{GI}(R)}=\mathcal{I}(R)$ and $\mathcal{W}(S)\cap{\mathcal{GI}(S)}=\mathcal{I}(S)$. So it suffices to prove that $X\in \mathcal{W}(R)$. Consider the exact sequence
$0\rightarrow M\otimes_{S}Y\rightarrow X\rightarrow \mathrm{coker}\phi\rightarrow0$. Since $_{R}M_{S}$ is cocompatible, $M\otimes_{S}Y\in\mathcal{I}(R)\in\mathcal{W}(R)$. Hence $X\in\mathcal{W}(R)$ since $\mathcal{W}(R)$ is closed under extensions. Conversely, let $\left(\begin{smallmatrix}  X  \\   Y \\\end{smallmatrix}\right)_{\phi}\in\mathcal{U}_{1}\cap\mathcal{V}_{1}$. It remains to prove that $\mathrm{coker}\phi\in{\mathcal{W}(R)}$.
By the argument above, we see that in this case,  $M\otimes_{S}Y\in\mathcal{I}(R)\in\mathcal{W}(R)$. Therefore, the claim follows by the fact that $\mathcal{W}(R)$ is a thick subcategory. Note that the triangular matrix ring $\Lambda$ can induce a recollement situation between the module categories over the rings $R$, $\Lambda$ and $S$ (see \cite[Example 2.7]{Psaroudakis} for instance).
Thus Lemma \ref{lem inj}, Lemma \ref{lem Ginj} and Theorem \ref{thm:main1} yield the desired recollement. Note that in each model structures above, all objects are cofibrant. Therefore, we have ${L}(i^{\ast})=i^{\ast}$, ${L}(i_{\ast})=i_{\ast}$, ${R}(i^{!})=i^{!}R_{\Lambda}$, ${L}(j_{!})=j_!$, ${L}(j^{\ast})=j^{\ast}$ and ${R}(j_{\ast})=j_{\ast}R_{S}$. Here, the notation such as $R_{S}$ means to take a special $\mathcal{GI}(S)$-preenvelope.
\end{proof}

\renewcommand\refname{References}

%
\end{document}